\documentclass[12pt, leqno]{article}

\setlength{\oddsidemargin}{-0.025in}
\setlength{\textwidth}{6.5in}
\setlength{\topmargin}{0.25in}
\setlength{\textheight}{8in}

\usepackage{physics}
\usepackage{amsmath}
\usepackage{tikz}
\usepackage{mathdots}
\usepackage{yhmath}
\usepackage{cancel}
\usepackage{color}
\usepackage{siunitx}
\usepackage{array}
\usepackage{multirow}
\usepackage{amssymb}
\usepackage{gensymb}
\usepackage{tabularx}
\usepackage{booktabs}
\usetikzlibrary{fadings}
\usetikzlibrary{patterns}
\usetikzlibrary{shadows.blur}
\usetikzlibrary{shapes}


\RequirePackage{latexsym} \RequirePackage{amsthm}
\RequirePackage{amsmath,xcolor}
\usepackage{enumerate}
\RequirePackage{amssymb} \RequirePackage{makeidx}
\usepackage{dsfont}
\usepackage{mathrsfs}
\numberwithin{equation}{section}
\usepackage[cp1251]{inputenc}



\newtheorem{conjecture}{\sc  Conjecture\rm}[section]
\newtheorem{preremark}[conjecture]{Remark}
\newenvironment{remark}%
  {\begin{preremark}\upshape}{\end{preremark}}
\newtheorem{predefinition}[conjecture]{Definition}
\newenvironment{definition}%
  {\begin{predefinition}\upshape}{\end{predefinition}}
\newtheorem{prelemma}[conjecture]{Lemma}
\newenvironment{lemma}%
  {\begin{prelemma}\upshape}{\end{prelemma}}
\newtheorem{preproposition}{Proposition}
\newenvironment{proposition}%
 {\begin{preproposition}\upshape}{\end{preproposition}}
\newtheorem{precorollary}{Corollary}
\newenvironment{corollary}%
  {\begin{precorollary}\upshape}{\end{precorollary}}
\newtheorem{pretheorem}{Theorem}
\newenvironment{theorem}%
  {\begin{pretheorem}\upshape}{\end{pretheorem}}

\def\R{{\mathbb R}}
\def\N{{\mathbb N}}

\def\tu{\tilde u}

\def\be{\begin{equation}}
\def\ee{\end{equation}}
\def\bea#1\eea{\begin{align}#1\end{align}}
\def\non{\nonumber}
\begin{document}

\title{\bf Entire minimizers of  Allen-Cahn systems with sub-quadratic potentials}

\author{Nicholas D. Alikakos\thanks{Department of Mathematics, University of Athens (EKPA), Panepistemiopolis, 15784 Athens,
Greece \,(nalikako@math.uoa.gr )} , {Dimitrios Gazoulis }\thanks{Department of Mathematics and Applied Mathematics, University of Crete, 70013 Heraklion,
Greece\hspace{10ex} (dgazoulis@math.uoa.gr )} , Arghir Zarnescu\thanks{
IKERBASQUE, Basque Foundation for Science, Plaza Euskadi 5
48009 Bilbao, Bizkaia, Spain (azarnescu@bcamath.org) }\,\,\thanks{BCAM, Basque Center for Applied Mathematics, Mazarredo 14, E48009 Bilbao, Bizkaia, Spain}\,\,\thanks{``Simion Stoilow" Institute of the Romanian Academy, 21 Calea Grivitei, 010702 Bucharest, Romania.}}

\date{}

\maketitle

\begin{center}
\textit{Dedicated to Pavol Brunovsky, a man of brilliance and very high morality}
\end{center}

\begin{abstract}
We study entire  minimizers of the  Allen-Cahn systems. The specific feature of our systems are  potentials having a finite number of global minima, with sub-quadratic behaviour locally near their minima.  The corresponding formal Euler-Lagrange equations are supplemented with free boundaries.
\par We do not study regularity issues but focus on qualitative aspects. We show the existence of entire solutions in an  equivariant setting connecting the minima of $ W $ at infinity, thus modeling many coexisting phases, possessing free boundaries and minimizing energy in the symmetry class. We also present a very modest result of existence of free boundaries under no symmetry hypotheses. The existence of a free boundary can be related to the existence of a specific sub-quadratic feature, a dead core, whose size is also quantified.
\end{abstract}

\section{Introduction and Main Results}

In this note we consider  minimizers in the whole space  $\mathbb{R}^n$ for  the functional

\begin{equation}\label{DefJu}
J (u) = \int \dfrac{1}{2} |\nabla u|^2 + W(u)dx
\end{equation} with  $ u : {\mathbb{R}}^n \rightarrow {\mathbb{R}}^m.$


\bigskip
We take $ W \geq 0 $ and $ \lbrace W = 0 \rbrace = \lbrace a_1, ...,a_N \rbrace:=A $, for some distinct points $ a_1,...,a_N \in {\mathbb{R}}^m $ that can physically model the phases of a substance that can exist in $ N \geq 2 $ equally preferred states. 

We assume that
\begin{equation}\label{liminfW}
\liminf_{|z|\rightarrow \infty} W(z) >0
\end{equation}

If $W$ is smooth then the first derivatives vanish at the minimum points and the generic local behaviour near such a minimum, say $a_i$,  is locally of quadratic nature, of the type $|u-a_i|^2$. The minimizers satisfy the Euler-Lagrange system

\begin{equation}\label{ELequation}
\Delta u -W_u(u) = 0.
\end{equation}

\bigskip
We are interested in the class of solutions that connect in some way the phases or a subset of them. The scalar case $ m=1 $ has been extensively studied with $ N=2 $ that is the natural choice. The reader may consult \cite{dPKW}, \cite{Savin}, \cite{W} where further references can be found. A well known conjecture of De Giorgi (1978) and its solution about thirty years later, played a significant role in the development of a large part of this work.

The vector case $ m\geq 2 $ by comparison has been studied very little. We note that for coexistence of three or more phases a vector order parameter is necessary and so there is physical interest for the system.

For $ m\geq 2 $, \eqref{ELequation} has been mainly studied in the class of equivariant solutions with respect to reflection groups beginning with \cite{BGS} and later \cite{GS} and significantly extended and generalized in various ways \cite{AF3}, \cite{A2}, \cite{F2}, \cite{AF1}, \cite{AF4}, \cite{BFS}. We refer to \cite{AFS} where existence under symmetry is covered and where more references can be found.

Degenerate, super-quadratic behavior at the minima has also been considered for \eqref{ELequation}, $ m=1$, in \cite{BS}, \cite{DFV}.

The focus of our work will be on going beyond this classical setting and explore the phenomena that are associated having sub-quadratic behaviour at the minima. Specifically, our  potentials are modelled near their minima $ a \in A$ after $ | u-a |^{\alpha} $, for $0< \alpha<2$. Furthermore we will consider also the limiting case $ \alpha =0 $ (that appears in a $ \Gamma $-limit setting as $ \alpha \rightarrow 0 $). 
Formally, the minimizers solve certain free boundary problems:
\begin{enumerate}
\item For $\alpha\in (0,2)$:
\begin{equation}\label{formal(0,2)}
\begin{cases} \Delta u = W_u(u) \;\;\; \textrm{for} \;\: \lbrace u(x) \notin A \rbrace \\ | \nabla u |^2 = 0 \;\;\; \textrm{for} \;\: \partial \lbrace u(x) \notin A \rbrace\end{cases}
\end{equation}
\item For $\alpha=0$:
\begin{equation}\label{formal0}
\begin{cases} \Delta u = 0 \;\;\; \textrm{for} \;\: \lbrace u(x) \notin A \rbrace \\ | \nabla u |^2 = 2 \;\;\; \textrm{for} \;\: \partial \lbrace u(x) \notin A \rbrace
\end{cases}
\end{equation}
In Appendix \ref{sec:appendixB} we give a formal  justification of these, that can be made rigorous with suitable regularity results, \cite{AGZ}. We note that for $\alpha=2$, Corollary $3.1$ p.$92$ in \cite{AFS} states that if both $W(u(x))=0$ and $|\nabla u(x)|^2=O(W(u(x))$ then $u\equiv a_i$. This latter condition holds in the scalar case, $m=1$, by the Modica inequality. Hence for $\alpha=2, m=1$ we have $\partial\lbrace u(x)\not\in A\rbrace=\emptyset$. Thus a free boundary may be expected only in the non smooth case. The reason is rather simple and can be traced back to the non-uniqueness of the trivial solution of the ODE $ u' = \frac{2}{2-\alpha} C^{\frac{\alpha}{2}}u^{\frac{\alpha}{2}} $ that describes the behavior of the one-dimensional solutions (connections) near the minimum of $ W $ of \eqref{formal(0,2)}, \eqref{formal0}.
\end{enumerate}

Thus we focus on the range $ 0 \leq \alpha <2 $. An important special case of the potentials we consider is given, for the set of minima $A=\lbrace a_1,\dots,a_N\rbrace $ by 
\begin{equation}\label{Walpha}
W^{\overline{\alpha}}(u) = \prod_{k=1}^N | u-a_k|^{{\alpha}_k} \;\;\;,\; \overline{\alpha} = ({\alpha}_1,...,{\alpha}_N); 0<\alpha_k<2,\: \forall \: k\in\{1,\dots,N\} 
\end{equation} 

More generally, motivated by the form of $ W $ in \eqref{Walpha}, we assume:

\begin{align*}
\textbf{(H1)} 
\begin{cases} \underline{0 < \alpha <2}: 
W \in C( {\mathbb{R}}^m ; [0, + \infty)) \; \textrm{with }\; \lbrace W = 0 \rbrace = \lbrace a_1,...,a_N \rbrace \neq \emptyset \; (N \geq 2) . \\
\textrm{For} \;\:  a\in \: \lbrace W = 0 \rbrace \; \textrm{the function} \; W \; \textrm{is differentiable in a deleted} \\ 
\textrm{neighborhood of} \; a \; \textrm{and satisfies} \; \frac{d}{d \rho } W(a + \rho \xi ) \geq \alpha C^* \rho^{ \alpha -1 } \:,\: \forall \; \rho \in (0, \rho_0] \:,\\ \forall \; \xi \in {\mathbb{R}}^m : | \xi |=1,\; 
\textrm{for some constants} \; \rho_0 >0, C^* >0 \; \textrm{independent of} \; \alpha.
\end{cases}
\end{align*}
\begin{align*}
\begin{cases}
\underline{\alpha = 0}: \lbrace W = 0 \rbrace = \lbrace a_1, ...,a_N \rbrace := A \;,\: W(u) := W^0 (u) := { \chi }_{ \lbrace u \in S_A \rbrace} \\ S_A := \lbrace \sum_{i=1}^N {\lambda}_i a_i \;,\: {\lambda}_i \in [0,1) \;,\: \forall \;\: i=1,...,N \;, \sum_{i=1}^N {\lambda}_i =1 \;,\: N=m+1 \rbrace\\ \textrm{We assume that the simplex} \; S_A \; \textrm{is nondegenerate, that is the vectors} \\ \lbrace a_2-a_1,...,a_{m+1}-a_1 \rbrace \; \textrm{are linearly independent} \; \textrm{and} \; m \geq 2.
\end{cases}
\end{align*}
$ \\ $
Clearly $ W^{\overline{\alpha}} $ in \eqref{Walpha} satisfy \textbf{(H1)} ($ 0 <\alpha<2 $).

We are primarily interested in bounded minimizers defined on $ {\mathbb{R}}^n $. We note in passing that the only critical points of $ J_{{\mathbb{R}}^n} \;\:, n\geq 2 $, with bounded energy are trivial \cite{A1}. A minimizer $ u $, by definition minimizes energy subject to its Dirichlet values on any open, bounded $ \Omega \subset {\mathbb{R}}^n $. More precisely,$ \\ $

\begin{definition}\label{DefMin} Let $ \mathcal{O} \subset {\mathbb{R}}^n $ open. A map $ u \in W^{1,2}_{loc}(\mathcal{O}, {\mathbb{R}^m}) \cap L^{\infty} (\mathcal{O} ; {\mathbb{R}}^m) $ is called a \textit{minimizer} of the energy functional $ J $ defined in \eqref{DefJu} if
\begin{equation}\label{Defmin}
J_{\Omega} (u+v) \geq J_{\Omega}(u) \;\;,\; \textrm{for} \;\: v \in W^{1,2}_0(\Omega, {\mathbb{R}^m}) \cap L^{\infty} (\Omega ; {\mathbb{R}}^m)
\end{equation}
for every open bounded Lipschitz set $ \Omega \subset \mathcal{O} $, with $ J_{\Omega} $ denoting the value of the integral in \eqref{DefJu} when integrating over the domain $ \Omega. \\ $
\end{definition}

The case, $ \alpha =0 $ for $ m=1 $ was introduced and extensively studied by Caffarelli and his collaborators, with particular attention to the optimal regularity of the solution and to the regularity of the free boundary. These are important classical results that can be found for example in the books \cite{CS} or \cite{PSU}. There is recent interest in the vector case for free boundary problems. We mention below two papers which relate to our work and where additional references can be found.

In \cite{CSY} the authors study minimizers of the functional
\begin{equation}\label{DefFunctCSY}
\int_{\Omega} (\frac{1}{2} | \nabla u |^2 + Q^2(x) \chi_{\lbrace |u|>0 \rbrace})dx
\end{equation}
with $ u:\Omega \subset {\mathbb{R}}^n \rightarrow \mathbb{R}^m \;, \: u_i \geq 0 \;, \Omega $ bounded and $ u=g $ on $ \partial \Omega $. This corresponds to a cooperative system, and is a one-phase Bernoulli-type problem. On the other hand, our nonlinearity is of the competitive kind and our problem is a two-phase Bernoulli-type problem.

In \cite{MTV} the functional that is studied is
\begin{equation}\label{DefFunct}
\sum_{i=1}^m \int_{\Omega} \frac{1}{2} |\nabla u_i|^2 + \Lambda \mathcal{L}^n (\cup_{i=1}^m \lbrace u_i \neq 0 \rbrace )dx
\end{equation}
with $ u_i=\phi_i $ on $ \partial \Omega $. This is a two-phase type problem and it is quite close to our functional for $ \alpha =0 $.

The emphasis in these works is on the regularity of the solution and of the free boundary, while the existence of the free boundary is forced by the Dirichlet condition on $ \partial \Omega$, and is not an issue in that context.

For stating our main results we need some algebraic preliminaries.

A \textit{reflection point group} $G $ is a finite subgroup of the orthogonal group whose elements $ g $ fix the origin. We will be assuming for simplicity that $ m=n$ (the general case is presented in \cite{AFS}, Chapter 7), and that $ G $ acts both on the domain space $ \mathbb{R}^n $ and the target space $ \mathbb{R}^m $. A map $ u: \mathbb{R}^n \rightarrow \mathbb{R}^n $ is said to be \textit{equivariant} with respect to the action of $ G $, simply equivariant, if
\begin{align*}
u(gx) = g u(x) \;\;\;,\; \forall \; g \in G \;,\: x \in \mathbb{R}^n
\end{align*}

A \textit{reflection} $ \gamma \in G $ is a map $ \gamma : \mathbb{R}^n \rightarrow \mathbb{R}^n $ of the form
\begin{align*}
\gamma x = x - 2(x \cdot n_{\gamma} ) n_{\gamma} \;\:, \; for \;\: x \in \mathbb{R}^n
\end{align*}
for some unit vector $ n_{\gamma} \in \mathbb{S}^{n-1} $ which aside from its orientation is uniquely determined by $ \gamma $. The hyperplane
\begin{align*}
\pi_{\gamma} = \lbrace x \in \mathbb{R}^n : x \cdot n_{\gamma} =0 \ \rbrace
\end{align*}
is the set of the points that are fixed by $ \gamma $. The open half space $ \mathcal{S}_{\gamma}^+ = \lbrace x \in \mathbb{R}^n : x \cdot n_{\gamma} >0 \rbrace $ depends on the orientation of $ n_{\gamma} $. We let $ \Gamma \subset G $ denote the set of all reflections in $ G $. Every finite subgroup of the orthogonal group $ O(\mathbb{R}^n) $ has a \textit{fundamental region}, that is a subset $ F \subset \mathbb{R}^n $ with the following properties:
$ \\ \\ $
1. $ F$ is open and convex, $ \\ $
2. $ F \cap gF = \emptyset $ for $ I \neq g \in G $, where $ I $ is the identity, $ \\ $
3. $ \mathbb{R}^n = \cup \lbrace g \overline{F} : g \in G \rbrace. \\ $

The set $ \cup_{\gamma \in \Gamma} \pi_{\gamma} $ divides $ \mathbb{R}^n \setminus \cup_{\gamma \in \Gamma} \pi_{\gamma} $ in exactly $ |G| $ congruent conical regions. Each one of these regions can be identified with the fundamental region $ F $ for the action of $ G $ on $ \mathbb{R}^n $. We assume that the orientations of $ n_{\gamma} $ are such that $ F \subset \mathcal{S}_{\gamma}^+ $ and we have
\begin{align*}
F = \cap_{\gamma \in \Gamma} \mathcal{S}_{\gamma}^+ 
\end{align*}

Given $ a \in \mathbb{R}^n $, the \textit{stabilizer} of $ a $, denoted by $ G_a \subset G $ is the subgroup of the elements $ g \in G $ that fix $ a $:
\begin{align*}
G_{a} = \lbrace g \in G : ga =a \rbrace .
\end{align*}

We now introduce two more hypotheses: $ \\ \\ $
\textbf{(H2)}(symmetry) The potential $ W $ is invariant under a reflection (point) group $ G $ acting on $ \mathbb{R}^n $, that is
\begin{align*}
W(gu) = W(u) \;\;\; \textrm{for all} \;\: g \in G \;\: \textrm{and} \;\: u \in \mathbb{R}^n.
\end{align*}
Moreover we assume \eqref{liminfW}.
$ \\ \\ $
\textbf{(H3)}(Location and number of global minima) Let $ F \subset \mathbb{R}^n $ be a fundamental region of $ G $. We assume that $ \overline{F} $ contains a single global minimum of $ W $ say $ a_1 \neq 0 $, and let $ G_{a_1} $ be the stabilizer of $ a_1 $. Setting $ D:= Int(\cup_{g\in G_{a_1}} g \overline{F}) \;,\: a_1 $ is also the unique global minimum of $ W $ in the region $ D $.
\bigskip

Notice that, by the invariance of $ W $, Hypothesis \textbf{(H3)} implies that the number of minima of $ W $ is
\begin{align*}
N = \frac{|G|}{|G_{a_1}|} ,
\end{align*}
where $ |\cdot| $ stands for the number of elements.

We can now state our first main result.
$ \\ $

\begin{theorem}\label{ThEquivMin} ($ 0 < \alpha <2 $) Under hypothesis \textbf{(H1)}-\textbf{(H3)}, there exists an equivariant minimizer $ u $ of $ J $, $ u : \mathbb{R}^n \rightarrow \mathbb{R}^n $, such that $ \\ $
1. $ |u(x) - a_1| =0 $ for $ x \in D $ and $ d(x, \partial D) \geq d_0 \: , $ where $ d_0 $ a positive constant depending on $ ||u||_{L^{\infty} (\mathbb{R}^n, \mathbb{R}^n)} \: , \;\: C^* \; \textrm{and} \; \alpha \;(d_0 \rightarrow + \infty \;\: \textrm{as} \;\: \alpha \rightarrow 2). $
$ \\ $ 2. $ u( \overline{F} ) \subset \overline{F} \;,\: u(\overline{D}) \subset \overline{D} $ (positivity). $ \\ $

\end{theorem}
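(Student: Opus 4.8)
The plan is to construct the equivariant minimizer by the direct method, working in a carefully chosen admissible class, and then to establish the two geometric properties (the dead core and the positivity) as a posteriori properties of the constructed minimizer.

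\medskip

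\textbf{Step 1: the admissible class and truncation.} First I would fix the target box $\mathcal{B}$ to be a large closed ball (or the convex hull of the orbit $G a_1$) containing $a_1$ and invariant under $G$, and work in the class
\[
\mathcal{A} = \{ u \in W^{1,2}_{loc}(\mathbb{R}^n;\mathbb{R}^n) : u \text{ equivariant},\; u(x)\in\mathcal{B}\ \text{a.e.},\ u(x)=a_1 \text{ in } D\setminus B_R \text{ for some } R\},
\]
more precisely requiring on each fundamental region that the map take $\overline F$ into $\overline F$; since $W\ge 0$ and $W=0$ on $A$, prescribing $u\equiv a_1$ on the ``outer'' part of $D$ gives a competitor of finite energy, so $\mathcal{A}\ne\emptyset$. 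One must check that equivariance and the constraint $u(\overline F)\subset\overline F$ are preserved under weak $W^{1,2}_{loc}$ convergence (these are closed, convex-type constraints, standard) and that $J$ is weakly lower semicontinuous (convexity of $|\nabla u|^2$ plus Fatou for the nonnegative term $W(u)$, using continuity of $W$). A minimizing sequence has locally bounded energy; after the truncation to $\mathcal{B}$ the $L^\infty$ bound is automatic, so one extracts a weak limit $u$ in $W^{1,2}_{loc}$, which lies in $\mathcal{A}$ and minimizes $J$ over $\mathcal{A}$. The standard cut-and-paste / comparison argument (replacing $u$ on a ball by any competitor with the same boundary data, reflected to be equivariant) then shows $u$ is in fact a minimizer in the sense of Definition~\ref{DefMin} \emph{within the equivariant class}; to get genuine minimality (Definition~\ref{DefMin} with arbitrary $v\in W^{1,2}_0$), one invokes the reflection/symmetrization principle of the equivariant theory (as in \cite{AFS}), which says that in the presence of \textbf{(H2)}–\textbf{(H3)} the equivariant minimizer is also an unconstrained local minimizer.

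\medskip

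\textbf{Step 2: positivity, property 2.} The inclusion $u(\overline F)\subset\overline F$ is built into $\mathcal{A}$, but one should verify it is not vacuous, i.e. that projecting an arbitrary equivariant map onto this constraint does not increase energy. The key is that $\overline F = \cap_{\gamma\in\Gamma}\mathcal{S}_\gamma^+$ is a convex cone fixed setwise appropriately, and the nearest-point projection $P_{\overline F}$ onto it is $1$-Lipschitz and commutes with $G_{a_1}$; composing with $P_{\overline F}$ decreases $\int|\nabla u|^2$ and, because $a_1\in\overline F$ and $W$ is radially monotone near each minimum by \textbf{(H1)} (and $G$-invariant), does not increase $\int W(u)$. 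Hence the minimizer may be taken with $u(\overline F)\subset\overline F$, and then $u(\overline D)\subset\overline D$ follows since $\overline D = \cup_{g\in G_{a_1}} g\overline F$ and $u$ is $G_{a_1}$-equivariant. This is the ``positivity'' statement.

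\medskip

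\textbf{Step 3: the dead core, property 1 --- the main obstacle.} This is the heart of the matter and where the sub-quadratic hypothesis \textbf{(H1)} is essential. The mechanism: near $a_1$, the differential inequality $\frac{d}{d\rho}W(a_1+\rho\xi)\ge \alpha C^*\rho^{\alpha-1}$ means $W(z)\ge C^*|z-a_1|^\alpha$ for $|z-a_1|\le\rho_0$, a potential that vanishes to order $\alpha<2$, hence (unlike the quadratic case) admits compactly supported subsolutions. I would argue by a comparison/barrier argument: on the set where $|u-a_1|$ is small, $v:=|u-a_1|$ is a subsolution of a scalar equation $\Delta v \ge c\, v^{\alpha-1}$ (in a suitable weak sense, using $\Delta u = W_u(u)$ away from the free boundary and Kato's inequality, $\Delta|u-a_1|\ge \frac{(u-a_1)\cdot\Delta u}{|u-a_1|} = \frac{(u-a_1)\cdot W_u(u)}{|u-a_1|}\ge \alpha C^* |u-a_1|^{\alpha-1}$), and one constructs an explicit radial supersolution $\bar v(x)=c_0\,(|x-x_0| - r_0)_+^{2/(2-\alpha)}$ which vanishes identically on $B_{r_0}(x_0)$. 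The exponent $\tfrac{2}{2-\alpha}>1$ is exactly the one flagged in the remark after \eqref{formal0} (solutions of $u'= \frac{2}{2-\alpha}C^{\alpha/2}u^{\alpha/2}$). Comparing $v\le \bar v$ wherever $v$ is small — using that $\|u\|_{L^\infty}$ controls $v$ from above on the relevant sphere so the barrier dominates on the boundary — forces $u\equiv a_1$ on a ball; running this at every point $x\in D$ with $d(x,\partial D)\ge d_0$ for $d_0$ chosen (depending on $\|u\|_\infty$ and $C^*$) so that the barrier fits inside $D$, gives property 1. The delicate points I expect to fight with are: (i) making the Kato-type inequality and the comparison rigorous across the free boundary $\partial\{u\notin A\}$, where $u$ is merely $W^{1,2}\cap L^\infty$ and not a priori $C^1$ — one may instead argue variationally, comparing $u$ directly with the competitor $\max(u-a_1,\ \bar v)$-type truncations and using minimality to kill the energy in the core; and (ii) pinning down the precise dependence $d_0 = d_0(\|u\|_\infty, C^*)$ from the size of the barrier's support, which comes out of the ODE constant $\frac{2}{2-\alpha}C^{\alpha/2}$ and the requirement that $\bar v$ reach the value $\|u-a_1\|_\infty$ before exiting $D$.
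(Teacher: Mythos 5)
Your overall architecture (build a minimizer, then prove positivity and a dead core) matches the paper's, but there are two concrete gaps, and the construction itself diverges in an important way.

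\textbf{On Step 1--2 (construction and positivity).} You propose to minimize directly over a constrained class and then argue that the nearest-point projection $P_{\overline F}$ onto the convex cone $\overline F$ does not increase $\int W(u)$. This last claim is false in general: $G$-invariance of $W$ only gives $W(\gamma z)=W(z)$, which does not control $W(P_{\overline F}(z))$. For instance, with a single reflection $\gamma(x,y)=(x,-y)$ and $\overline F=\{y\ge 0\}$, $P_{\overline F}(x,y)=(x,\max(y,0))$; $G$-invariance says $W$ is even in $y$, which is perfectly compatible with $y=0$ being a local \emph{maximum}, so $W(P_{\overline F}(z))$ can exceed $W(z)$. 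Hypotheses \textbf{(H1)}--\textbf{(H3)} give no monotonicity of $W$ in the reflection directions, which is exactly what your projection step would need. The paper sidesteps this difficulty entirely: it replaces $W$ by a $C^2$ regularization $W^\varepsilon$ (preserving the zero set, $G$-invariance, and the bound \textbf{(H1)}), invokes the smooth equivariant theory from \cite{AFS} (Lemma 6.1) to obtain a positive equivariant minimizer $u_R^\varepsilon$ of $J^\varepsilon_{B_R}$, proves uniform H\"older and Basic Estimates, and passes to the limit $\varepsilon\to 0$, then $R\to\infty$. Positivity is inherited in the limit. The remark following the theorem states explicitly that this regularization is used precisely to avoid re-doing the positivity argument for the non-smooth $W$.

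\textbf{On Step 3 (dead core).} Your barrier $\bar v(x)=c_0(|x-x_0|-r_0)_+^{2/(2-\alpha)}$ is essentially the Sperb-type supersolution that the paper also uses (via Lemmas \ref{LemDC1}--\ref{LemDC2}), and your observation about the exponent $\tfrac{2}{2-\alpha}$ is exactly right. But the differential inequality $\Delta v\ge c\,v^{\alpha-1}$ (or $\Delta|u-a_1|^2\ge c^2|u-a_1|^\alpha$) is valid \emph{only where $|u-a_1|\le\rho_0$}, since \textbf{(H1)} is a hypothesis near the minima. A priori you only know $|u-a_1|\le \|u\|_\infty$, and on the set where $|u-a_1|>\rho_0$ you have no inequality at all, so the comparison argument does not close. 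The paper therefore runs a preliminary step (Proposition \ref{PointwiseEst}(i)): using the Basic Estimate $J_{B_r}\le C_0 r^{n-1}$ together with the Density Estimate \eqref{TheDensityEstimate}, one shows that $|u(\xi)-a_1|<q$ at points $\xi$ at distance $r_q$ from $\partial D$; the maximum principle of \cite{AFS} Theorem 4.1 then propagates $|u-a_1|<q$ to the whole inner ball; and \emph{only then} does the Sperb/barrier comparison apply, yielding $u\equiv a_1$ on a core. One also has to check that the Density Estimate (whose proof deforms the modulus $|u-a_1|$ toward $a_1$ while leaving the angular part unchanged) respects both equivariance and the constraint $u(\overline F)\subset\overline F$ — the paper verifies this via the convexity of $F$. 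Your proposal omits this entire preliminary smallness step and the consistency-with-equivariance check; they are essential, not a refinement.

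A minor further point: the weak form of the differential inequality across the free boundary is not automatic; the paper establishes it in Lemma \ref{LemWeakDifIn} by approximating with $v_\varepsilon=\max(v,\varepsilon)$. Your alternative suggestion to ``argue variationally with $\max(u-a_1,\bar v)$-type truncations'' would require detail, but the concern is real and you were right to flag it.

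In short: the barrier idea and the exponent calculus in your Step 3 are correct in spirit, but the proposal is missing the regularization-and-limit construction that yields positivity, and the Density-Estimate/maximum-principle step that makes the barrier applicable.
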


Hence by equivariance the statements above hold for all $ a_i \:,\; i=1,...,N $, in the respective copy of $ D $.
$ \\ $

\begin{remark}\label{RmkforTh}
In \cite{AGZ} it is shown that $ u \in C^{2,\alpha-1}_{loc} $ for $ \alpha \in (1,2) \;,\; u \in C^{1,\gamma}_{loc} $ for any $ \gamma \in (0,1) $ and $ u \in C^{1,\frac{\alpha}{1-\alpha}} $ for $ \alpha \in (0,1) $. The regularity for $ \alpha \in (0,1) $ is optimal. In Lemma \ref{LemHC} we establish the (suboptimal) estimate $ |u|_{C^{\beta}} < \infty $ (any $ \beta \in (0,1)) $ that holds for all $ \alpha \in [0,2) $ which is sufficient for our purposes. We revisit this point also later. 

The analog of Theorem \ref{ThEquivMin} for $ \alpha =2 \;,\: W \in C^2 $ was established in a series of papers by the first author and G.Fusco. It can be found in \cite{AFS} (Theorem 6.1) where detailed references are given. The main difference with Theorem \ref{ThEquivMin} above is that the condition $ |u(x) -a_1|=0 $ for $ x \in D \;,\: d(x,D) \geq d_0 $, is replaced by $ |u(x) -a_1| \leq K e^{-k d(x, \partial D)} \;,\: x \in D $, where $ k \;,\: K $ are positive constants. In that context the minimizer $ u $ is a classical solution of \eqref{ELequation} while in the present context $ u $ is a weak $ W_{loc}^{1,2} $ solution of \eqref{ELequation} in the complement of the free boundary $ \partial \lbrace u(x) \notin A \rbrace $. The theorem in the smooth case is utilized in our proof of Theorem \ref{ThEquivMin} where we are constructing a minimizer with the positivity property via a $ C^2 $ regularization of the potential. We thus bypass the gradient flow argument used in the proof of the $ \alpha =2 $ case in \cite{AFS} that would be problematic in the present setting. The role of positivity can be seen in the following proposition, which does not presuppose symmetry.
\end{remark}

\begin{proposition}\label{PointwiseEst}($ 0 < \alpha <2 $)
(i) Assume that $ W $ as in (\textbf{H1}) above, and $ u $ a bounded minimizer of $ J \;, \: u: \mathbb{R}^n \rightarrow \mathbb{R}^m \;,\; ||u||_{L^{\infty} (\mathbb{R}^n, \mathbb{R}^m)}< \infty $. Moreover, let $ \mathcal{O} \subset \mathbb{R}^n $ open, assume that 
\begin{equation}\label{PointwiseEsteq1}
d(u( \mathcal{O} ), \lbrace W=0 \rbrace \setminus \lbrace a \rbrace ) \geq k >0
\end{equation}
$ d $ the Euclidean distance, k constant. 

Then given $ q \in (0, ||u||_{L^{\infty} (\mathbb{R}^n, \mathbb{R}^m)}) \;,\: \exists \;\: r_q >0 $ such that
\begin{equation}\label{PointwiseEsteq2}
B_{r_q}(x_0) \subset \mathcal{O} \Rightarrow | u(x_0) -a | <q
\end{equation}
(ii) Let further $ 0 < 2q \leq {\rho}_0 $ (cfr (\textbf{H1})).
Then there exists an explicit constant $ \hat{C} = \hat{C}(\alpha,n) >0 $ (see \eqref{PointwiseEsteq19} , $ \lim_{\alpha \rightarrow 2} \hat{C}(\alpha,n) = \infty $ , $ \lim_{\alpha \rightarrow 0} \hat{C}(\alpha,n) = \infty $ ), such that
\begin{equation}\label{PointwiseEsteq3}
B_{\hat{C} q^{- \alpha} (x_0)} \subset \mathcal{O} \Rightarrow u(x) \equiv a \;\;,\;in\;\: B_{\frac{\hat{C}}{2} q^{- \alpha} (x_0)}
\end{equation}
\end{proposition}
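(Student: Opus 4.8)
The plan is to exploit the minimizing property of $u$ by comparing $u$ with a competitor that is ``switched off'' on a large ball, forcing $u$ to be near $a$ (part (i)) and ultimately equal to $a$ on a slightly smaller ball (part (ii)). For part (i), I would argue by contradiction: if the conclusion fails, there is a sequence of radii $r_j\to\infty$ and points $x_j$ with $B_{r_j}(x_j)\subset\mathcal O$ but $|u(x_j)-a|\ge q$. After translating $x_j$ to the origin, the rescaled (in this case, simply translated) minimizers $u_j(x):=u(x+x_j)$ are uniformly bounded in $L^\infty$ and, by the H\"older estimate of Lemma \ref{LemHC}, equi-H\"older on compact sets; hence along a subsequence they converge locally uniformly to an entire minimizer $u_\infty$ of $J$ on $\R^n$, with $|u_\infty(0)-a|\ge q$ and $d(u_\infty(\R^n),\{W=0\}\setminus\{a\})\ge k$. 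The contradiction comes from a Liouville-type rigidity fact: an entire bounded minimizer whose image stays a fixed distance from all zeros of $W$ except $a$ must be identically $a$; this follows from the density/monotonicity estimates for minimizers (the energy on $B_R$ grows at most like $R^{n-1}$ for a nontrivial minimizer connecting phases, whereas staying away from the other minima while not being constant forces a volume term $\gtrsim R^n$ of positive potential energy, cf. \eqref{liminfW} and the lower bound in \textbf{(H1)}). This is the first place real care is needed.

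For part (ii), which is the quantitative heart, I would drop the contradiction argument and work directly. Fix $x_0$ with $B_{\hat C q^{-\alpha}}(x_0)\subset\mathcal O$; by part (i) (with a concrete $r_q$ that I can track through the H\"older estimate) I may assume $|u-a|\le 2q\le\rho_0$ on all of $B_R(x_0)$ for $R$ of order $\hat C q^{-\alpha}$, so hypothesis \textbf{(H1)} applies pointwise: $\rho\mapsto W(a+\rho\xi)$ dominates $C^*\rho^\alpha$ up to additive constant, i.e. $W(u(x))\ge C^*\,|u(x)-a|^\alpha$ on $B_R(x_0)$ (after adjusting $C^*$). Now compare $u$ on $B_R(x_0)$ with the competitor $v$ that equals $u$ outside $B_R(x_0)$, equals $a$ on $B_{R/2}(x_0)$, and interpolates linearly in the annulus: $v(x)=a+\phi(|x-x_0|)(u(x)-a)$ with $\phi$ a cutoff, $\phi\equiv0$ on $[0,R/2]$, $\phi\equiv1$ on $[R,\infty)$, $|\phi'|\lesssim 1/R$. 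Minimality gives $J_{B_R}(u)\le J_{B_R}(v)$, and since $|u-a|\le 2q$ the right-hand side is bounded by $C(\|\nabla u\|^2_{L^2}$-type terms plus $\frac{1}{R^2}q^2 R^n + \sup_{B_R}W\cdot R^n)$; more carefully one gets on $B_{R/2}(x_0)$ that $\int_{B_{R/2}}\!\big(\tfrac12|\nabla u|^2+W(u)\big)\le \frac{C}{R^2}\int_{\text{annulus}}|u-a|^2 + C q^\alpha R^n$-type error, which combined with $W(u)\ge C^*|u-a|^\alpha$ and a Poincar\'e/Caccioppoli iteration over dyadic balls yields a closed differential inequality for $m(r):=\int_{B_r(x_0)}(\tfrac12|\nabla u|^2+W(u))$. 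The subquadratic growth $\alpha<2$ is exactly what makes this inequality of the type $m(r)\le C\big(r\,m'(r)\big)^{\theta}$ with $\theta<1$ after using $W(u)\ge C^*|u-a|^\alpha$ and interpolating, which forces $m(r)\equiv0$ for $r$ below an explicit threshold — this is the standard ``finite speed of propagation / dead core'' mechanism for subquadratic potentials, and it pins down $\hat C$ with the stated blow-up as $\alpha\to 2$ (the inequality degenerates, $\theta\to1$) and as $\alpha\to0$ (the constant $C^*\rho^\alpha$ term degenerates in $r_q$).

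The main obstacle I anticipate is making the dead-core energy iteration genuinely quantitative with an \emph{explicit} constant $\hat C(\alpha,n)$ and the correct power $q^{-\alpha}$ for the radius, rather than merely proving existence of some dead core. One must choose the comparison annulus scale, the Poincar\'e constant, and the exponent $\theta=\theta(\alpha)$ so that the resulting ODE inequality for $m(r)$ integrates in closed form; the bookkeeping near the endpoints $\alpha\to 0$ and $\alpha\to2$ is delicate because the two degenerations have different sources (loss of coercivity of $W$ near $a$ versus loss of the subquadratic gain). A secondary technical point is justifying that the competitor $v$ is admissible in Definition \ref{DefMin}, i.e. lies in $W^{1,2}_0$ relative to the outer data and stays bounded — routine given $|u-a|\le 2q$ and $|\nabla\phi|\lesssim 1/R$, but it must be stated. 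I would also need part (i)'s $r_q$ to be at most comparable to $\hat C q^{-\alpha}$ so that the two parts chain together; tracking the H\"older exponent $\beta$ from Lemma \ref{LemHC} through this comparison is where I would spend the most effort.
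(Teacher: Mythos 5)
Your outline for part (i) is correct in spirit but takes a detour the paper avoids. You propose translating, extracting a locally uniform limit $u_\infty$ (using Lemma \ref{LemHC}), and invoking a Liouville-type rigidity for the limit. The paper argues directly on $u$: if $|u(x_0)-a|\geq q$, the H\"older estimate supplies a nondegenerate initial density for the set $\{|u-a|>q/2\}$, the Density Estimate \eqref{TheDensityEstimate} upgrades this to volume $\gtrsim r^n$, and since $W\geq w_{q/2}>0$ on that set (by \eqref{PointwiseEsteq1} and the fact that $u$ stays away from all other zeros), one gets $w_{q/2}C_1 r^n \leq J_{B_r(x_0)}(u) \leq C_0 r^{n-1}$ by the Basic Estimate \eqref{TheBasicEstimate}, impossible for $r>C_0/(w_{q/2}C_1)$. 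This yields the explicit $r_q$ without passing to a limit, and in particular avoids having to prove that the limit of translated minimizers is again a minimizer. Your observation that one may obtain $|u-a|<q$ on $B_{R-r_q}(x_0)$ by simply applying (i) at each interior point is correct and is a legitimate shortcut past the paper's citation of the maximum-principle propagation result from \cite{AFS}, Theorem 4.1.

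For part (ii) there is a genuine gap. The energy-iteration you sketch — a comparison competitor $v=a+\phi(u-a)$, a Caccioppoli-type estimate, and an ODE inequality $m(r)\leq C\big(r\,m'(r)\big)^{\theta}$ with $\theta<1$ for $m(r):=J_{B_r(x_0)}(u)$ — does not produce a dead core. Integrating $m'\geq C^{-1/\theta}r^{-1}m^{1/\theta}$ from $r$ to $R$ gives only $m(r)^{-(1/\theta-1)}\geq m(R)^{-(1/\theta-1)}+c\ln(R/r)$, hence $m(r)\to 0$ as $r\to 0$ but never $m(r)=0$ at a positive radius, which is what $u\equiv a$ on a whole ball requires. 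Moreover, this scheme cannot by itself recover the explicit constant $\hat C(\alpha,n)$ or the $q^{-\alpha}$ scaling in the radius. The mechanism the paper actually uses is entirely absent from your plan: from the Euler--Lagrange equation and \textbf{(H1)} one derives the pointwise differential inequality $\Delta v \geq c^2 v^{\alpha/2}$ (weakly) for $v:=|u-a|^2$, with $c^2=2\alpha C^*$ (\eqref{DifIn1}--\eqref{DifIn2} and Lemma \ref{LemWeakDifIn}), then compares $v$ with the explicit Sperb supersolution $\bar u(x)=X(s(x))$ built from the torsion function of the ball (Lemma \ref{LemDC1}, Theorem \ref{ThSperb}, Corollary \ref{CorolSperb}). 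This comparison yields $v\equiv 0$ at distance $R_0=\tfrac{\sqrt{n(\alpha+2)}}{(1-\alpha/2)c}\,q^{1-\alpha/2}$ from $\partial B_{R-r_q}(x_0)$, and combining with the bound $r_q\lesssim q^{-\alpha}$ from (i) pins down $\hat C(\alpha,n)$ in \eqref{PointwiseEsteq19}. To make your proposal rigorous you would need to replace the energy ODE inequality by this pointwise, differential-inequality-plus-supersolution argument, or by a different integral inequality whose integration genuinely terminates at a positive radius; as written the key subquadratic ingredient — the failure of unique continuation for $\Delta v = c^2 v^{\alpha/2}$ at $v=0$ when $\alpha<2$ — is not actually exploited.
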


\begin{remark}\label{RmkPointwiseEst} Part (i) of Proposition \ref{PointwiseEst} holds for $ \alpha =2 $, and is a result obtained in \cite{F1}. It can be found also in \cite{AFS} Theorem 5.3. Note that positivity allows the application of this with $ \mathcal{O} =D $, since the solution in $ D $ stays away from all the minima except one. This reveals the nature of \textbf{(H3)}.
$ \\ $
Part (ii) is utilizing a ``Dead Core'' estimate (Lemma \ref{LemDC1} below) which shows that for a function $ v \in W^{1,2}(B_R(x_0)) $
\begin{equation}\label{RmkPointwiseEsteq1}
\begin{cases} \Delta v \geq c^2 v^{\frac{\alpha}{2}} \;\;\;,\;\textrm{weakly in} \;\: W^{1,2} (B_R(x_0)) \\ 0 \leq v \leq \delta \;\;,\; \delta >0 \;\: \textrm{sufficiently small depending on} \; c
\end{cases}
\end{equation}
\end{remark}
Then if
\begin{equation}\label{RmkPointwiseEsteq2}
\begin{cases} dist(y_0, \partial B_R(x_0)) > R_0 \Rightarrow \\
v(y_0) =0 \; \textrm{for} \; R>R_0 = \frac{\sqrt{n(n+2)}}{(1- \frac{\alpha}{2})c} \delta^{\frac{2-\alpha}{4}} \;,\: \alpha \in (0,2)
\end{cases}
\end{equation}
``Dead Core'' regions are sets where the solution is constant.

The first appearance of such a situation was in \cite{CS}, \cite{PS1}, followed by more in depth study in \cite{Sperb}.

$ \\ $

\begin{proposition}\label{PropLowerBd}
($ \alpha =0 $) Let
\begin{equation}\label{PropLowerBdeq1}
J(u) = \int (\frac{1}{2}|\nabla u|^2 + \chi_{A^c}(u))dx
\end{equation}
where $ A := \lbrace W=0 \rbrace = \lbrace a_1,...,a_N \rbrace \subset \mathbb{R}^m \;(N \geq 2),\; A^c = \mathbb{R}^m \setminus A . $ Let $ u $ be a nonconstant minimizer, $ u:\mathbb{R}^n \rightarrow \mathbb{R}^m \;,\; ||u||_{L^{\infty} (\mathbb{R}^n, \mathbb{R}^m)} < \infty $. Suppose that for some $ a_i \in A $ we have
\begin{equation}\label{PropLowerBdeq2}
d(u(B_R(x_0)), \lbrace W=0 \rbrace \setminus a_i ) >0
\end{equation}
Then
\begin{equation}\label{PropLowerBdeq3}
\mathcal{L}^n (\lbrace u = a_i \rbrace \cap B_R(x_0)) \geq c R^n \;,\; R \geq R_0
\end{equation}
for some constant $ c>0 $ independent of $ R $.
\end{proposition}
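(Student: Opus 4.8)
The plan is to argue by a comparison/competitor construction combined with the monotonicity-type lower bound on the energy that is typical for Alt--Caffarelli-type functionals. First I would normalize so that $x_0=0$ and write $E(R):=J_{B_R}(u)$. Since $u$ is a nonconstant minimizer and $\chi_{A^c}(u)$ takes only the values $0$ and $1$, the energy splits as $E(R)=\int_{B_R}\frac12|\nabla u|^2\,dx+\mathcal L^n(\{u\notin A\}\cap B_R)$. The first step is a standard upper bound on $E(R)$: using as competitor the function that equals $a_i$ on $B_{R-1}$, interpolates linearly (in the radial variable) on the annulus $B_R\setminus B_{R-1}$ between $a_i$ and the boundary trace of $u$, and equals $u$ outside $B_R$, together with minimality and $\|u\|_{L^\infty}<\infty$, gives $E(R)\le C R^{n-1}$ for $R\ge1$, with $C$ depending only on $n,m,\|u\|_{L^\infty}$. (Here one uses the hypothesis \eqref{PropLowerBdeq2}, via Proposition \ref{PointwiseEst}(i) applied with $\mathcal O$ a large ball, to know that $u$ stays uniformly away from the other minima so that the interpolation toward $a_i$ is legitimate and does not pick up extra potential cost; alternatively one just bounds $W=\chi_{A^c}\le1$ crudely on the annulus.)

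The second and main step is the lower bound. I would show that if $\mathcal L^n(\{u=a_i\}\cap B_R)$ were too small — say $o(R^n)$ along a sequence — then one could lower the energy. Set $V(r):=\mathcal L^n(\{u\notin A\}\cap B_r)$; the claim $\mathcal L^n(\{u=a_i\}\cap B_R)\ge cR^n$ is equivalent (using \eqref{PropLowerBdeq2}, which forces $\{u\in A\}\cap B_R\subset\{u=a_i\}\cap B_R$ up to a null set, since $u$ is continuous — by Lemma \ref{LemHC} — and cannot touch the other $a_j$ in $B_R$) to $V(R)\le(1-c)\,\omega_n R^n$. Suppose not. The energy density being at least $1$ on $\{u\notin A\}$, we get $E(R)\ge V(R)$, which is not yet a contradiction with the $CR^{n-1}$ bound unless $V(R)\gtrsim R^n$; so the real point is to exploit the Dirichlet term. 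The clean way is a \emph{density estimate}: I would prove that there is $c_0>0$ with $\mathcal L^n(\{u=a_i\}\cap B_\rho(y))\ge c_0\rho^n$ whenever $y\in\overline{\{u=a_i\}}$ and $B_\rho(y)\subset B_R$ — this is the standard one-sided Alt--Caffarelli density estimate, proved by comparing $u$ on $B_\rho(y)$ with the competitor that is pushed to $a_i$ on $B_{\rho/2}(y)$ and interpolated on the annulus: minimality gives $\frac{c}{\rho}\mathcal L^n(\{u\notin A\}\cap B_{\rho/2}(y))\le$ (gradient saved) $\le C\rho^{n-1}+\frac{1}{\rho}\,\mathcal L^n(\text{annulus})$... and rearranging yields a nontrivial portion of $B_{\rho/2}(y)$ in $\{u=a_i\}$. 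Once this interior density estimate is in hand, together with the fact that $\{u=a_i\}\cap B_R$ is nonempty for $R\ge R_0$ (which itself follows from the upper bound $E(R)\le CR^{n-1}$: if $u$ never equalled $a_i$ in $B_R$, the energy would have to be at least $\omega_nR^n$ by the pointwise lower bound $\frac12|\nabla u|^2+\chi_{A^c}\ge$ something like a fixed constant away from $A$ — more carefully one invokes Proposition \ref{PointwiseEst}(i) to produce at least one point where $u=a_i$, or directly a contradiction with $E(R)\le CR^{n-1}$ for large $R$), a Vitali covering of a fixed fraction of $B_R$ by such balls $B_\rho(y)$ with $\rho\sim R$ upgrades the local density to the global bound $\mathcal L^n(\{u=a_i\}\cap B_R)\ge cR^n$.

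There is a subtlety I should flag as the main obstacle: for $\alpha=0$ the functional is genuinely a free-boundary (Alt--Caffarelli-type two-phase) functional with $\chi_{A^c}$ discontinuous, so the competitor constructions must be done at the level of $W^{1,2}\cap L^\infty$ functions and one must be careful that the interpolated competitor actually lands in $\{u\in A\}$ on the inner ball — i.e. exactly equals $a_i$ there, not merely close — in order to \emph{save} the full unit of potential energy on that region; this is what makes the density estimate quantitative and is exactly where \eqref{PropLowerBdeq2} (no other minimum is attained nearby, so $\{u\in A\}$ locally means $\{u=a_i\}$) is essential. The reverse estimate (nondegeneracy of $\{u\notin A\}$) is not needed here. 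I would also note that the constant $c$ is independent of $R$ precisely because every step is scale-invariant in the regime $R\ge R_0$, the only scale entering through $R_0$ and $\|u\|_{L^\infty}$. The threshold $R\ge R_0$ is needed only to guarantee $\{u=a_i\}\cap B_R\neq\emptyset$; for smaller $R$ the set could be empty and the statement would fail.
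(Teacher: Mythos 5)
Your proposal is dramatically more complicated than what the problem requires, and in fact you pass right by the finish line and then keep running. You observe (correctly) that the Basic Estimate gives $E(R)\le CR^{n-1}$, and you observe (correctly) that the potential term is at least $\mathds{1}$ on $\{u\notin A\}$, hence $E(R)\ge V(R):=\mathcal L^n(\{u\notin A\}\cap B_R)$. These two facts already finish the proof: by hypothesis \eqref{PropLowerBdeq2} the image $u(B_R(x_0))$ stays away from every $a_j$ with $j\ne i$, so on $B_R(x_0)$ one has $\{u\notin A\}=\{u\ne a_i\}$, and therefore
\begin{align*}
\mathcal L^n(\{u\ne a_i\}\cap B_R(x_0))=V(R)\le E(R)\le CR^{n-1},
\end{align*}
whence
\begin{align*}
\mathcal L^n(\{u=a_i\}\cap B_R(x_0))\ge \omega_n R^n - CR^{n-1}\ge c R^n\quad\text{for }R\ge R_0.
\end{align*}
This is exactly the paper's proof. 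Where you go wrong is in the sentence asserting that $E(R)\ge V(R)$ ``is not yet a contradiction with the $CR^{n-1}$ bound unless $V(R)\gtrsim R^n$; so the real point is to exploit the Dirichlet term.'' But under your own contradiction hypothesis you have just assumed $V(R)\gtrsim R^n$, so the contradiction is already present, and in fact no contradiction argument is needed at all: the bound on $V(R)$ is direct. The Dirichlet term is not exploited here, nor is anything resembling an Alt--Caffarelli density estimate or a Vitali covering. Worth also flagging a real flaw in the fallback route you sketch: an interior density estimate in balls centered on $\overline{\{u=a_i\}}$ together with a Vitali covering would only lower-bound $\mathcal L^n(\{u=a_i\})$ by a fraction of the measure of a \emph{neighborhood of} $\{u=a_i\}$, not by a fraction of $|B_R|$, so that plan would need an additional ingredient to close; fortunately it is all unnecessary.
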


What about existence of minimizer defined on $ \mathbb{R}^n $ possessing a free boundary and without any symmetry assumptions? This is a difficult open problem for the coexistence of three or more phases. We have the following simple result in this direction.

\begin{proposition}\label{PropFinitePer}
($ \alpha =0 $) Consider the functional
\begin{equation}\label{PropFinitePereq1}
J(u) = \int (\frac{1}{2} | \nabla u|^2 + \chi_{A^c}(u))dx
\end{equation}
where $ A= \lbrace a_1,...,a_N \rbrace $ distinct points in $ \mathbb{R}^m \;,\: A^c = \mathbb{R}^m \setminus A . \\ $ Let $ u: \mathbb{R}^n \rightarrow \mathbb{R}^m $ be a nonconstant minimizer with $ || u ||_{L^{\infty} (\mathbb{R}^n , \mathbb{R}^m)} < \infty $ and $ x_0 \in \mathbb{R}^n $, arbitrary and fixed. Then there exist an $ R_0 >0 $ and at least two distinct points $ a_i \neq a_j $ in $ A $, such that the following estimates hold:
\begin{equation}\label{PropFinitePereq2}
{\mathcal{L}}^n (\overline{B_R(x_0)} \cap \lbrace u(x) = a_k \rbrace) \geq c_k R^n \;, \; R \geq R_0 \;,\; k=i,j
\end{equation}
\begin{equation}\label{PropFinitePereq3}
|| \partial \lbrace u(x) = a_k \rbrace|| (B_R(x_0)) \; \geq \hat{c}_k R^{n-1} \;\;,\; R \geq R_0 \;,\;  k=i,j
\end{equation}
where $ c_k \:, \: \hat{c}_k $ are positive constants, independent of $ x_0 $ and $ R $ (but depending on $ u $). $ || \partial E || $ stands for the perimeter measure of the set $ E $ and $ || \partial E|| (B_R(x_0)) $ denotes the perimeter of $ E $ in $ B_R(x_0) $ (see for instance \cite{EG}).
\end{proposition}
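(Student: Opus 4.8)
The plan is to exploit the fact that a nonconstant bounded minimizer on $\mathbb{R}^n$ cannot avoid the set $A$ on large balls without incurring too much energy, and cannot spend too much ``transition'' energy either. The starting point is the linear energy growth bound: since $u$ is a bounded minimizer of the functional $J$ in \eqref{PropFinitePereq1}, comparison on $B_R(x_0)$ with a competitor that equals a fixed $a_k\in A$ on, say, $B_{R-1}(x_0)$ and interpolates linearly in the annulus $B_R\setminus B_{R-1}$ gives $J_{B_R(x_0)}(u)\le C R^{n-1}$ for all $R\ge 1$, with $C$ depending only on $\|u\|_{L^\infty}$, $n$ and $\operatorname{diam} A$. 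In particular $\mathcal{L}^n(\{u\notin A\}\cap B_R(x_0))=\int_{B_R(x_0)}\chi_{A^c}(u)\,dx\le C R^{n-1}$, so for $R$ large the bulk of $B_R(x_0)$ lies in $\{u\in A\}=\bigcup_k\{u=a_k\}$, and at least one $a_i$ captures a fixed fraction: $\mathcal{L}^n(\{u=a_i\}\cap B_R(x_0))\ge c_i R^n$. If only one value were attained on a set of positive density, I would use the isoperimetric/relative-isoperimetric inequality together with the perimeter bound from the energy (the coarea/Modica–Mortola type estimate $\|\partial\{u=a_i\}\|(B_R)\le C J_{B_R}(u)\le CR^{n-1}$, valid since $\{u=a_i\}$ has locally finite perimeter by the energy bound) to show that $u$ would have to be identically $a_i$ on all of $\mathbb{R}^n$ after letting $R\to\infty$, contradicting nonconstancy; this forces a \emph{second} value $a_j$ to occupy a set of positive density as well, giving \eqref{PropFinitePereq2} for $k=i,j$.

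Next, \eqref{PropFinitePereq3}: once we know that both $\{u=a_i\}$ and $\{u=a_j\}$ have density bounded below in $B_R(x_0)$ for all large $R$, I would apply the relative isoperimetric inequality in the ball $B_R(x_0)$ to the set $E_k=\{u=a_k\}$: $\min\{\mathcal{L}^n(E_k\cap B_R),\mathcal{L}^n(B_R\setminus E_k)\}^{(n-1)/n}\le C_n\,\|\partial E_k\|(B_R)$. Since $\mathcal{L}^n(E_i\cap B_R)\ge c_i R^n$ and $\mathcal{L}^n(B_R\setminus E_i)\ge \mathcal{L}^n(E_j\cap B_R)\ge c_j R^n$, the left side is $\gtrsim R^{n-1}$, yielding $\|\partial\{u=a_k\}\|(B_R(x_0))\ge \hat c_k R^{n-1}$ for $k=i,j$. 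Independence of $x_0$ follows because all constants produced this way depend only on $\|u\|_{L^\infty}$, $A$, $n$ (and the density constants $c_k$, which themselves are $x_0$-independent by translation of the comparison argument), though of course they depend on $u$ through $\|u\|_{L^\infty}$.

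The main obstacle I anticipate is the step that rules out a single dominant phase, i.e. upgrading ``$\{u=a_i\}$ has positive density and small perimeter on every large ball'' to ``$u\equiv a_i$.'' The perimeter bound $\|\partial\{u=a_i\}\|(B_R)\le CR^{n-1}$ says the boundary has the perimeter of a set of finite (density-zero complement) type, but to conclude $u$ is constant one must handle the complement $\{u\notin\{a_i\}\}=\{u\in A^c\}\cup\bigcup_{l\ne i}\{u=a_l\}$ carefully: the volume estimate already forces $\mathcal{L}^n(\{u\notin A\}\cap B_R)=o(R^n)$, so the real content is showing that $\sum_{l\ne i}\mathcal{L}^n(\{u=a_l\}\cap B_R)=o(R^n)$ leads to a contradiction with nonconstancy. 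I would argue by a blow-down: rescale $u_R(y)=u(x_0+Ry)$; the rescaled perimeters $\|\partial\{u_R=a_i\}\|(B_1)\to 0$, so in $BV$ the characteristic functions converge to a constant, meaning $\{u=a_i\}$ has full density in $B_R(x_0)$ as $R\to\infty$; combined with a unique-continuation / density argument for minimizers (using that on the large region where $u\in\{a_i\}\cup A^c$ and the complement of $A$ shrinks, minimality forces $u\equiv a_i$ by the strong maximum principle applied to $|u-a_i|$ where $W$ vanishes to finite order — here the $\alpha=0$ structure and Proposition \ref{PropLowerBd} are the relevant tools), one reaches $u\equiv a_i$ on $\mathbb{R}^n$. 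Making this blow-down rigorous — in particular the compactness of minimizers and the passage of the ``no free boundary'' conclusion to the limit — is the delicate part; everything else is a packaging of standard isoperimetric and energy-comparison estimates.
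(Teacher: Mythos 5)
Your isoperimetric step (relative isoperimetric inequality applied to $E_k=\{u=a_k\}$ in $B_R$, bounding $\min\{\mathcal{L}^n(E_i\cap B_R),\mathcal{L}^n(B_R\setminus E_i)\}$ from below by $c R^n$ using the two density lower bounds) is exactly the paper's argument for \eqref{PropFinitePereq3}, and your use of the Basic Estimate $J_{B_R}(u)\le CR^{n-1}$ to control $\mathcal{L}^n(\{u\notin A\}\cap B_R)$ is also in the paper. Where you diverge is the crucial density lower bound \eqref{PropFinitePereq2}, and there you have a genuine gap.

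Two problems. First, pigeonhole alone does not give a single index $a_i$ with a \emph{uniform} constant $c_i$ for all $R\ge R_0$: for each $R$ some $a_{k(R)}$ captures a fraction, but $k(R)$ may oscillate with $R$, and passing to a subsequence of radii does not give the stated inequality for all $R\ge R_0$. Second, and more seriously, the step you yourself flag as delicate — ruling out a single dominant phase via a blow-down plus a strong-maximum-principle/unique-continuation argument — is not completed, and the tools you invoke do not apply cleanly in this setting: for $\alpha=0$ the potential is a characteristic function, so there is no ``finite-order vanishing'' of $W$ near $a_i$ to feed into a maximum principle for $|u-a_i|$, and Proposition~\ref{PropLowerBd} presupposes exactly the kind of ``stays away from the other minima'' information you would be trying to prove, so the argument is circular as sketched. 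The compactness of minimizers under blow-down and the stability of the ``no free boundary'' conclusion in the limit are also nontrivial and not addressed.

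The paper avoids all of this by using the Density Estimate \eqref{TheDensityEstimate} in a different way, applied to the complements $\{|u-a_k|>\lambda\}$ rather than to the coincidence sets themselves. Since $u$ is nonconstant, for \emph{each} $k$ there is a point where $u\neq a_k$; by continuity and \eqref{TheDensityEstimate} one gets
\begin{equation*}
\mathcal{L}^n\bigl(B_R(x_0)\cap\{|u-a_k|>\lambda\}\bigr)\ge c_k R^n,\qquad R\ge R_0,
\end{equation*}
for every $k$, with $R_0$ and $c_k$ uniform in $x_0$ up to translation. Fix $\theta<\min_{i\neq j}|a_i-a_j|$ and $\lambda<\min_{i\neq j}|a_i-a_j|-\theta$; then $\bigcup_{k\neq l}\{|u-a_k|\le\theta\}\subset\{|u-a_l|>\lambda\}$ while the remainder $\{|u-a_l|>\lambda\}\cap\bigcap_{k\neq l}\{|u-a_k|>\theta\}$ is a subset of $\{u\in A^c\}$ up to a $\theta$-neighbourhood and has measure $O(R^{n-1})$ by the Basic Estimate. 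Subtracting gives
\begin{equation*}
\mathcal{L}^n\Bigl(B_R(x_0)\cap\bigcup_{k\neq l}\{|u-a_k|\le\theta\}\Bigr)\ge\overline{c}_lR^n\qquad\text{for every }l.
\end{equation*}
If only one index $i^*$ had $\{|u-a_{i^*}|\le\theta\}$ of positive density, taking $l=i^*$ in the display above would give a contradiction, since the union on the left would then have measure $o(R^n)$. This forces at least two indices, and converting ``$|u-a_k|\le\theta$'' into ``$u=a_k$'' costs only $O(R^{n-1})$ again by the Basic Estimate. In short: the missing tool in your proposal is the Caffarelli--Cordoba Density Estimate applied simultaneously to all the sets $\{|u-a_k|>\lambda\}$, which makes the ``at least two'' conclusion a one-line set-theoretic consequence and renders the blow-down unnecessary.
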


\begin{remark}\label{RmkPrFP}
Proposition \ref{PropFinitePer} holds for the whole range of potentials $ 0< \alpha <2 $ defined in \textbf{(H1)} but with a significantly harder proof \cite{AGZ}.
\bigskip

The natural way of constructing entire solutions $ u $ to \eqref{ELequation} without symmetry requirements is by minimizing over balls $ B_R$ with appropriate boundary conditions forcing the phases on $ B_R $:
\begin{align*}
\min J_{B_R} (v) \;\;,\; v = g_R \;\;,\; \textrm{on} \;\: \partial B_R,
\end{align*}
and taking the limit along subsequences of minimizers $ u_R $
\begin{align*}
u = \lim_{R \rightarrow \infty} u_R
\end{align*}
\end{remark}
\begin{remark}\label{RmkSymmetrica=0} The result from Proposition \ref{PropFinitePer} holds for the symmetric case as in Theorem \ref{ThEquivMin} for $ \alpha=0 $, and provides some quantitative information on the Dead Core. We have not been able to establish the exact analog of Theorem \ref{ThEquivMin} for $ \alpha=0 $.
\end{remark}
\begin{proposition}\label{PropSymmetrica=0} ($ \alpha =0 $)
Under the hypothesis \textbf{(H1)}-\textbf{(H3)} and $ N=m+1 $, there exist a nontrivial equivariant minimizer of $ J(u) = \int ( \frac{1}{2}| \nabla u|^2 + \chi_{\lbrace u \in S_A \rbrace} )dx \;\:,\; u : \mathbb{R}^n \rightarrow \mathbb{R}^n $, such that
1. $ u( \overline{F}) \subset \overline{F} \;\:,\; u(\overline{D}) \subset \overline{D} $ (positivity). $ \\ $
2. $ \mathcal{L}^n ( D_R \cap \lbrace u =a_1 \rbrace ) \geq c R^n \;\;, \; R \geq R_0 $, where $ D_R = D \cap B_R(0) $ ($ D $ from \textbf{(H3)}). $ \\ $
3. $ \mathcal{L}^n (D_R \cap \lbrace u \neq a_1 \rbrace) \leq C R^{n-1} \;\;,\; R \geq R_0 $.
\end{proposition}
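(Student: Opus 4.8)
\noindent\emph{Plan of proof.} The idea is to obtain $u$ as a limit, along a sequence $\alpha_j\downarrow 0$, of the equivariant minimizers furnished by Theorem \ref{ThEquivMin}, and then to read off properties 1--3 from the estimates already available for the approximating problems together with Propositions \ref{PropLowerBd} and \ref{PropFinitePer}. One cannot expect to prove the exact analogue of Theorem \ref{ThEquivMin} (i.e.\ a genuine dead core $u\equiv a_1$ in the bulk of $D$), precisely because the dead--core constant $\hat C(\alpha,n)$ of Proposition \ref{PointwiseEst} blows up as $\alpha\downarrow 0$; the substitute is the volume lower bound of item 2, which will be extracted \emph{a posteriori} from the limit.

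\noindent\emph{Step 1 (approximating potentials and uniform bounds).} Fix a family $W_\alpha$, $\alpha\in(0,1)$, of $G$-invariant potentials with $\{W_\alpha=0\}=A$, satisfying \textbf{(H1)}--\textbf{(H3)} with $\rho_0,C^*$ uniform in $\alpha$, such that (i) $0\le W_\alpha\le 1$ and $W_\alpha\to W^0$ pointwise on $\overline{S_A}=\overline{\mathrm{conv}\,A}$ as $\alpha\downarrow 0$, and (ii) $W_\alpha$ is nondecreasing along rays leaving $\overline{S_A}$; a normalized modification of $W^{\overline\alpha}$ from \eqref{Walpha} does this. Let $u_\alpha$ be the equivariant minimizer of $J^\alpha$ provided by Theorem \ref{ThEquivMin}, so $u_\alpha(\overline F)\subset\overline F$, $u_\alpha(\overline D)\subset\overline D$. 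Property (ii) ensures that composing $u_\alpha$ with the nearest--point projection $\Pi$ onto $\overline{S_A}$ does not increase $J^\alpha$ ($\Pi$ is $1$-Lipschitz and does not increase the potential term) while it preserves equivariance and the two inclusions; hence we may assume $u_\alpha(\mathbb R^n)\subset\overline{S_A}$, so $\|u_\alpha\|_{L^\infty}\le\max_i|a_i|$ uniformly. By this $L^\infty$ bound and Lemma \ref{LemHC}, $\{u_\alpha\}$ is bounded in $C^\beta_{loc}(\mathbb R^n)$ for every $\beta\in(0,1)$; testing minimality of $u_\alpha$ against the standard equivariant competitor that equals $a_i$ on the bulk of each conical region $gF$ and interpolates in an $O(1)$ neighborhood of $\cup_{\gamma\in\Gamma}\pi_\gamma$ gives $J^\alpha_{B_R}(u_\alpha)\le CR^{n-1}$ with $C$ independent of $\alpha$ and $R$, hence a uniform $W^{1,2}_{loc}$ bound.

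\noindent\emph{Step 2 (the limit is a nontrivial equivariant minimizer of $J^0$).} Along a subsequence $\alpha_j\downarrow 0$, $u_{\alpha_j}\to u$ locally uniformly and weakly in $W^{1,2}_{loc}$. Then $u$ is equivariant (a closed condition), satisfies $u(\overline F)\subset\overline F$, $u(\overline D)\subset\overline D$ (this is item 1) and $u(\mathbb R^n)\subset\overline{S_A}$, and inherits $J^0_{B_R}(u)\le CR^{n-1}$: lower semicontinuity of the Dirichlet term, and $\int_\Omega W_{\alpha_j}(u_{\alpha_j})\to\int_\Omega W^0(u)$ by dominated convergence, the only delicate set being $\{u\in A\}$, where convergence is trivial since $A$ is the common zero set of all $W_{\alpha_j}$ and of $W^0$. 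The same comparison shows $u$ minimizes $J^0$ in the equivariant class: the liminf inequality is the semicontinuity just used, and for an equivariant competitor $v$ with values in $\overline{S_A}$ agreeing with $u$ off a bounded set, $v$ itself serves as a recovery sequence ($J^{\alpha_j}(v)\to J^0(v)$, and $J^{\alpha_j}(u_{\alpha_j})\le J^{\alpha_j}(v)$). Finally $u$ is nonconstant: the only equivariant constant map is $u\equiv 0$, and $u\equiv 0$ is not a minimizer because it is beaten by a competitor equal to $a_1$ on a large ball inside $D$ with an $O(1)$ interpolating collar (equivalently, $J^{\alpha_j}_{B_R}(u_{\alpha_j})\ge c_0>0$ for $R$ large, which passes to the limit).

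\noindent\emph{Step 3 (items 2 and 3, and the main obstacle).} First note that, by \textbf{(H3)} and equivariance, $\overline F\cap A=\{a_1\}$ implies $g\overline F\cap A=\{a_1\}$ for every $g\in G_{a_1}$, hence $\overline D\cap A=\{a_1\}$; since $\overline D$ is closed this gives $\kappa:=d(A\setminus\{a_1\},\overline D)>0$. Combined with $u(\overline D)\subset\overline D$, on $\overline D$ we have the key identity $\{u\in A\}=\{u=a_1\}$, and since $u(D)\subset\overline{S_A}=S_A\sqcup A$ also $\{u\ne a_1\}\cap D=\{u\in S_A\}\cap D$ and $\{u=a_1\}\cap D=\{W^0(u)=0\}\cap D$. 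Item 3 is then immediate: $\mathcal L^n(D_R\cap\{u\ne a_1\})=\mathcal L^n(D_R\cap\{u\in S_A\})\le\int_{D_R}W^0(u)\,dx\le J^0_{D_R}(u)\le CR^{n-1}$. For item 2, choose $x_0\in D_{CR}$ with $B_R(x_0)\subset D$ (possible with $C$ depending only on the aperture of $D$); then $u(B_R(x_0))\subset\overline D$ stays at distance $\ge\kappa$ from $A\setminus\{a_1\}$, which is exactly the separation hypothesis needed to run Proposition \ref{PropLowerBd} for the potential $W^0$ (its proof uses only convex interpolations between $a_1$ and $\overline{S_A}$-valued maps, which remain in $\overline{S_A}$); this yields $\mathcal L^n(B_R(x_0)\cap\{u=a_1\})\ge cR^n$, hence $\mathcal L^n(D_{(C+1)R}\cap\{u=a_1\})\ge cR^n$, which after relabeling $R$ is item 2. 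The main obstacle is Step 2--3 interface: guaranteeing that the limit does not collapse (nontriviality) and that the separation $d(u(B_R(x_0)),A\setminus\{a_1\})>0$ genuinely holds on balls exhausting the cone $D$, so that Proposition \ref{PropLowerBd} can be applied to $u$ directly rather than trying to pass the approximate dead cores (which shrink as $\alpha\downarrow 0$) to the limit.
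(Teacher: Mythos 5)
Your overall strategy matches the paper's: pass to the limit along equivariant minimizers $u_{\alpha_j}$ as $\alpha_j\downarrow 0$, using the containment $u_{\alpha_j}(\mathbb R^n)\subset\overline{S_A}$ and the Basic Estimate to get a nontrivial equivariant minimizer of $J^0$, then use positivity to read off items 2 and 3. The paper does this by invoking Lemma~\ref{LemJaLim} (whose hypothesis $u^{\alpha_k}\in\overline{S_A}$ is supplied by the Containment result, Proposition~\ref{PrCont}) and then applying Proposition~\ref{PropFinitePer} together with the partition of $\mathbb R^n$ into the copies $D^1,\dots,D^{N+1}$. Two of your choices differ and one has a real gap.

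\emph{The projection trick is not justified as stated.} You take the entire-space minimizer $u_\alpha$ delivered by Theorem~\ref{ThEquivMin} and assert that replacing it by $\Pi\circ u_\alpha$ (nearest-point projection onto $\overline{S_A}$) ``may be assumed''. But $J_\Omega(\Pi\circ u_\alpha)\le J_\Omega(u_\alpha)$ on a bounded $\Omega$ does \emph{not} make $\Pi\circ u_\alpha$ a competitor for $u_\alpha$ there, since $\Pi\circ u_\alpha\neq u_\alpha$ on $\partial\Omega$ unless you already know $u_\alpha(\partial\Omega)\subset\overline{S_A}$ --- exactly what you are trying to prove. The paper bypasses this by proving the Containment (Proposition~\ref{PrCont}) via a maximum-principle argument. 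Your projection idea \emph{can} be made to work, but at the level of the ball problems inside the proof of Theorem~\ref{ThEquivMin}: there the minimization of $J^\varepsilon_{B_R}$ is over all equivariant $W^{1,2}(B_R)$ maps without prescribed boundary data, so $\Pi\circ u_R^\varepsilon$ is a legitimate competitor and one can choose the $B_R$-minimizer to have values in $\overline{S_A}$, then pass to the limit. As written, this step is a genuine gap.

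\emph{Use of Proposition~\ref{PropLowerBd} in place of Proposition~\ref{PropFinitePer}.} This is a correct and in fact more direct route. Since $u(\overline D)\subset\overline D$ and $\overline D\cap A=\{a_1\}$, the separation hypothesis of Proposition~\ref{PropLowerBd} holds on any ball $B_R(x_0)\subset D$, and the Basic Estimate (which does hold for the equivariant minimizer, as established in Step~1 of the proof of Theorem~\ref{ThEquivMin}) gives the volume lower bound; translating to $D_{(C+1)R}$ and relabelling gives item~2. The paper instead applies Proposition~\ref{PropFinitePer} at the origin and then localizes using positivity and equivariance; yours avoids invoking the Density Estimate and the two-phase argument entirely. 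Item~3 you derive exactly as the paper does, from $\{u\neq a_1\}\cap D=\{u\in S_A\}\cap D$ and the Basic Estimate.

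One small inaccuracy worth noting: in Step~2 you say the convergence $\int W_{\alpha_j}(u_{\alpha_j})\to\int W^0(u)$ is ``trivial on $\{u\in A\}$ since $A$ is the common zero set''. That reasoning is valid for a \emph{fixed} competitor $v$ in the recovery-sequence direction (where indeed $W_{\alpha_j}(v)=W^0(v)=0$ on $\{v\in A\}$), but it does not control $W_{\alpha_j}(u_{\alpha_j})$ on $\{u\in A\}$, because $u_{\alpha_j}(x)\to a_i$ at a rate that competes with $\alpha_j\to 0$ and the double limit is not determined. Fortunately, as in the paper's Lemma~\ref{LemJaLim}, that set only contributes to the liminf side of the $\Gamma$-limit inequality, where the nonnegativity of $W_{\alpha_j}$ suffices, so the conclusion stands; the phrasing just misidentifies where the delicacy lies.

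In summary: structurally the same proof, with an acceptable simplification (Prop.~\ref{PropLowerBd} instead of Prop.~\ref{PropFinitePer}), but the containment step must either be run at the ball level or replaced by an appeal to Proposition~\ref{PrCont}.
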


$ \\ $

A convenient hypothesis guaranteeing $ ||u||_{L^{\infty}} < \infty $ is \footnote{For $ \Omega \subset \mathbb{R}^n $ open, by linear elliptic theory $ u \in C^2(\Omega ; \mathbb{R}^m) $. Set $ v= |u|^2 $, then $ \Delta v = 2 W_u(u) \cdot u + 2 |\nabla u|^2 >0 $, for $ u >M $. Hence $ \max |u|^2 \leq M $ if $ v $ attains its max in the interior of $ \Omega $.}
\begin{equation}
\begin{cases} W_u(u) \cdot u \geq 0 \;\;\;,\; \textrm{for} \;\; |u| \geq M \:,\; \textrm{some} \; M \\ | g_R| \leq M
\end{cases}
\end{equation}

The existence of one-dimensional minimizers ($ u: \mathbb{R} \rightarrow {\mathbb{R}}^n \;\:,\; $ i.e. connections) for $ \alpha \in (0,2) $, can be obtained by Theorem 2.1, p.34 in \cite{AFS}. For the $ \alpha =0 $ case, where $ W $ is a characteristic function, one-dimensional minimizers are affine maps connecting the phases. More precisely,
\begin{equation}
u(x) = \begin{cases} a_1  \;\;\;, \;\; x < - L
\\ a_2 \;\;\;, \;\; x >L \\ \frac{a_2-a_1}{2L}x + \frac{a_1 +a_2}{2} \;\;,\; x \in [-L,L]
\end{cases} 
\end{equation}
and by minimality one can see that $ L = \frac{|a_2 -a_1|}{2\sqrt{2}} $, which is formally what we expect from the free boundary condition $ |\nabla u|^2 = 2 $ (see \eqref{formal0}).

The basic question of course is whether a nontrivial minimizer $ u $ connecting the phases can be constructed. We know from the work on the De Giorgi referred above conjecture that for $ m=1 $, and in low dimensions, any such minimizer will depend on a single variable, and so in a sense is trivial. For the system we expect otherwise, and indeed this was shown to be the case in the equivariant setting and for smooth potentials, in the book \cite{AFS}.

There are a few tools that we utilize in the sequel that because of their independent interest we mention explicitly.
$ \\ $

\underline{The Basic Estimate} $ \\ $
For minimizers, $ 0 \leq \alpha <2 $ satisfying $ |u(x)| \leq M \;,\; x \in \mathbb{R}^n $ we have that there exists $ r_0 >0 $ such that for any $ x_0 \in \mathbb{R}^n $
\begin{equation}\label{TheBasicEstimate}
J_{B_r(x_0)} (u) \leq C_0 r^{n-1} \;,\; r \geq r_0 >0\;,
\end{equation}
$C_0 >0$ constant, independent of $ u $, but depending on $ M $.
\bigskip

For $ \alpha \in [1,2) $ elliptic theory applied to \eqref{ELequation} implies $ ||\nabla u||_{L^{\infty}} < \infty $, and (\ref{TheBasicEstimate}) follows easily (cfr. \cite{AFS} Lemma 5.1). For $ \alpha \in [0,1) $, and $ m=1 $, it is already mentioned in \cite{CC}. We prove it in Lemma \ref{LemBE}. The estimate \eqref{TheBasicEstimate} is utilized in the proof of Proposition \ref{PropFinitePer}, and also in the proof of Proposition \ref{PointwiseEst} on which Part 1 of Theorem \ref{ThEquivMin} is based. Finally \eqref{TheBasicEstimate} is also utilized in the proof of the Density Estimate that we discuss below.
$ \\ $

\underline{The Density Estimate} $ \\ $
For minimizers $ u $ of the functional $ J $ in \eqref{DefJu}, $0 \leq \alpha <2$ satisfying $ |u(x)| \leq M $, we have
\begin{equation}\label{TheDensityEstimate}
\begin{cases} \mathcal{L}^n (B_{r_0}(x_0) \cap \lbrace |u-a|> \lambda\rbrace) \geq \mu_0 >0 \Rightarrow \\ \mathcal{L}^n (B_r(x_0) \cap \lbrace |u-a|> \lambda\rbrace) \geq Cr^n \;\;,\; r \geq r_0
\end{cases}
\end{equation}
$ C=C(\mu_0,\lambda) . \\ $

This is an important estimate of Caffarelli and Cordoba \cite{CC} established in the scalar case $ m=1$, and extended to the vector case by the first author and G.Fusco. We refer to \cite{AFS} Theorem 5.2, where detailed references can be found. The proof in \cite{AFS} has a gap for $ 0 \leq \alpha <1 $ since it is utilizing \eqref{TheBasicEstimate} that was taken for granted then but proved in the present paper.
$ \\ $

\underline{The H\"older Estimate} $ \\ $
For minimizers $ u $ of the functional $ J $ in \eqref{DefJu}, $ 0 \leq \alpha <2 $, satisfying $ |u(x)| < M \;,\; x\in \mathbb{R}^n $, we have the estimate
\begin{equation}\label{TheHolderEstimate}
| u(x) - u(y) | \leq C | x-y | \: \textrm{ln}(|x-y|^{-1}) \;\;\;,\; \forall \; x,y \;,\; |x-y| \leq \frac{1}{2}
\end{equation}
which implies $ u \in C^{\beta}(\mathbb{R}^n, \mathbb{R}^m) \;,\; \forall \beta \in (0,1) \;, \; C=C(M), $ that has already be mentioned. $ \\ $

This is established in \cite{ACF} for $ m=1 $ and $ \alpha=0$. We give a detailed proof in Lemma \ref{LemHC}. It is utilized in several places. For example in establishing Proposition \ref{PointwiseEst} (i) we proceed by a contradiction argument that invokes the Density Estimate. Here uniform continuity is essential, and is provided by \eqref{TheHolderEstimate}. It is also instrumental for the derivation of the Basic Estimate \eqref{TheBasicEstimate}. $ \\ $

The H\"older continuity is also needed in the proof of the Containment result presented in Appendix A, that we now describe.
$ \\ $

\underline{The Containment} $ \\ $
This states that for the special potentials
\begin{equation}
W(u) = \begin{cases} W^{\overline{\alpha}}(u) := \prod_{k=1}^{m+1} |u-a_k|^{\alpha_k} \;\;,\; \overline{\alpha} = (a_1,...,a_{m+1}) \;,\; 0 < a_k <2 \\ W^0(u) := \chi_{A^c}(u) \;\;,\; A= \lbrace a_1,...,a_{m+1} \rbrace
\end{cases}
\end{equation}
where the vectors $ \lbrace a_2-a_1,...,a_{m+1}-a_1 \rbrace $ are linearly independent in $ \mathbb{R}^m $, critical points of $ J(u) = \int (\frac{1}{2} |\nabla u|^2 + W(u))dx \;\;,\; u: \mathbb{R}^n \rightarrow \mathbb{R}^n \;,\; |u(x)| <M $, map $ \mathbb{R}^n $ inside the closure of the convex hull of $ A \;,\; \overline{co}(A) $. This result was obtained jointly by the first author and P.Smyrnelis, in unpublished work. Its proof requires uniform continuity, and so for $ \alpha \in (0,1) $ we need to restrict ourselves to minimizers for which \eqref{TheHolderEstimate} holds.

This result shows that $ J^0 $ is in some natural way the limit of $ J^{\overline{\alpha}} $, as $ \overline{\alpha} \rightarrow 0 $, and actually we establish a $ \Gamma $-limit type relationship in Lemma \ref{LemJaLim}. $ \\ $ 

This paper is structured as follows. $ \\ $
In section 2 we state and prove various Lemmas already mentioned in the introduction. $ \\ $
In section 3 we give the proofs of Theorem \ref{ThEquivMin}, Propositions \ref{PointwiseEst}, \ref{PropLowerBd} and \ref{PropFinitePer}. $ \\ $
In Appendix \ref{sec:appendixA} we state and prove the containment result, and in Appendix \ref{sec:appendixB} we give a formal argument, taken essentially from \cite{AFS}, that explains the free boundary conditions in \eqref{formal(0,2)} and \eqref{formal0}.

$ \\ \\ $

\par\noindent{\bf Acknowledgements}
We are greateful to Panayotis Smyrnelis for his interest in this work and his numerous comments that improved the paper. AZ would like to thank Prof. Luc Nguyen for pointing out the log-estimate argument  in the proof of Lemma $2.1$. Finally we would like to thank Zhiyuan Geng for introducing us to free boundary problems.

The work of A.Z. is supported by the Basque Government through the BERC 2018-2021
program, by Spanish Ministry of Economy and Competitiveness MINECO through BCAM
Severo Ochoa excellence accreditation SEV-2017-0718 and through project MTM2017-82184-
R funded by (AEI/FEDER, UE) and acronym ``DESFLU".

D.G. would like to acknowledge support of this work by the project ``Innovative Actions in Environmental Research and Development (PErAn'' (MIS 5002358) which is implemented under the ``Action for the Strategic Development on the Research and Technological Sector'', funded by the Operational Programme ``Competitiveness, Entrepreneurship and Innovation'' (NSRF 2014-2020) and co-financed by Greece and the European Union (European Regional Development Fund).

N.D.A. held a BCAM visiting fellowship in the fall of 2019 during which some of the results of the present paper were established; also would like to thank his host, Arghir Zarnescu and the people in the institute for their hospitality.

$ \\ \\ $

\section{Basic Lemmas}

\subsection{Regularity of $ u $ }
$ \\ $
We will prove a logarithmic estimate for bounded minimizers, following closely the proof of Theorem 2.1 in \cite{ACF} (see also Lemma 2 in \cite{CSY}). We have: $ \\ $

\begin{lemma}\label{LemHC} ($ 0 \leq \alpha <2 $, H\"older Continuity)
$ \\ $ Let $ u : {\mathbb{R}}^n \rightarrow {\mathbb{R}}^m $ a minimizer of $ J \;,\: | u(x) | < M \;,\: W $ satisfying \textbf{(H1)} for $ 0< \alpha <2 $ and $ W = \chi_{A^c}(u) $ for $ \alpha = 0 $. Then there exists constant $ C=C(M) $, such that
\begin{equation}\label{Lem21eq1}
| u(x) - u(y) | \leq C | x-y | \textrm{ln}(|x-y|^{-1}) \;\;\;,\; \forall \; x,y \;,\; |x-y| \leq \frac{1}{2}
\end{equation} 
In paricular, $ u \in C^{\beta} ({\mathbb{R}}^n ; {\mathbb{R}}^m) \;,\; \forall \: \beta \in (0,1) . \\ $
\end{lemma}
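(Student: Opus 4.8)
The plan is to follow the classical De Giorgi-type oscillation decay argument of Alt--Caffarelli--Friedman \cite{ACF}, adapted to the vectorial minimizer $u$ and the subquadratic (or characteristic-function) potential. The starting point is the trivial bound $W(u(x))\le \sup_{|z|\le M} W(z)=:W_M<\infty$, which holds because $|u|<M$ and $W$ is continuous on $\overline{B_M}$ (for $\alpha=0$, $W=\chi_{A^c}\le 1$). Thus on any ball $B_\rho(x_0)\subset\mathbb R^n$ the energy of $u$ is controlled by the Dirichlet part plus a volume term $W_M\,\omega_n\rho^n$. The first step is a Caccioppoli/energy comparison: fix $B_\rho(x_0)$, let $h$ be the harmonic extension into $B_\rho(x_0)$ of the trace of $u$ on $\partial B_\rho(x_0)$ (componentwise), and use minimality of $u$ against the competitor equal to $h$ inside and $u$ outside. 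This gives
\[
\int_{B_\rho(x_0)}\tfrac12|\nabla u|^2\,dx \;\le\; \int_{B_\rho(x_0)}\tfrac12|\nabla h|^2\,dx \;+\; W_M\,\omega_n\,\rho^{\,n},
\]
since $W(h)\ge 0$ and $W(u)\ge 0$. Because $h$ is harmonic with the same boundary data, $\int_{B_\rho}|\nabla h|^2\le \int_{B_\rho}|\nabla u|^2$, so this is only useful combined with the next ingredient.

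The second step extracts decay of the (rescaled) Dirichlet energy. Set $\Phi(\rho):=\rho^{2-n}\int_{B_\rho(x_0)}|\nabla u|^2\,dx$. Using the harmonic replacement on the annulus and the standard fact that for harmonic $h$ the quantity $\rho^{2-n}\int_{B_\rho}|\nabla h|^2$ is monotone, together with the energy inequality above (which now has the extra additive term $C\rho^n$, i.e.\ $C\rho^2$ after the $\rho^{2-n}$ scaling), one obtains a differential/iterative inequality of the form
\[
\Phi(\theta\rho)\;\le\;\tfrac12\,\Phi(\rho)\;+\;C\rho^{2}
\]
for a fixed $\theta\in(0,1)$, or more robustly $\Phi(\rho)\le C(\rho^{2}+ (\rho/R)^{?}\Phi(R))$ after iteration; in any case one gets, for all $\rho\le \tfrac12$,
\[
\rho^{2-n}\int_{B_\rho(x_0)}|\nabla u|^2\,dx\;\le\;C\,\rho^{2}\,\ln^{2}(\rho^{-1})\quad\text{or at worst}\quad \le C\rho^{2-\epsilon},
\]
where the logarithmic loss is exactly the borderline phenomenon: the inhomogeneous term $C\rho^2$ matches the natural scaling of $\Phi$, so summing the geometric iteration produces a factor $\ln(\rho^{-1})$ rather than a clean power. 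The third step converts this Morrey-type bound into a modulus of continuity via Morrey's/Campanato's embedding: control of $\rho^{2-n}\int_{B_\rho}|\nabla u|^2$ by $\rho^{2}\ln^2(\rho^{-1})$ forces, through the Poincar\'e inequality on balls and a telescoping over dyadic radii $2^{-k}|x-y|$, the pointwise estimate
\[
|u(x)-u(y)|\le C\,|x-y|\,\ln(|x-y|^{-1}),\qquad |x-y|\le\tfrac12,
\]
which in particular gives $u\in C^\beta_{\mathrm{loc}}$ for every $\beta\in(0,1)$, since $t\ln(t^{-1})\le C_\beta t^{\beta}$ as $t\to0$. A covering argument makes the estimate global with a constant depending only on $M$ (through $W_M$ and $n$).

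The main obstacle is the borderline scaling in the second step: unlike the standard Allen--Cahn or minimal-surface situation where one has genuine power decay of the normalized Dirichlet energy, here the bound $W(u)\le W_M$ is the only structural information available uniformly over $0\le\alpha<2$, and it contributes a volume term $\sim\rho^n$ whose rescaled size $\sim\rho^2$ is precisely critical for $\Phi$. Handling this requires the careful logarithmic bookkeeping in the iteration (this is the point where, per the acknowledgements, the ``log-estimate argument'' enters) rather than a naive geometric-series estimate, and one must check that the harmonic-replacement comparison is legitimate for merely $W^{1,2}_{\mathrm{loc}}\cap L^\infty$ competitors — which it is, since the harmonic extension of an $L^\infty$ trace is itself in $W^{1,2}\cap L^\infty$ with the same bound by the maximum principle, so the competitor is admissible in Definition \ref{DefMin}. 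Everything else (Poincar\'e, Campanato, the dyadic telescoping, the passage $t\ln t^{-1}\le C_\beta t^\beta$) is routine.
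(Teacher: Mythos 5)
Your overall framework — harmonic replacement on balls, an energy comparison coming from minimality and $0\le W\le W_M$, a Morrey-type decay for $\rho^{2-n}\int_{B_\rho}|\nabla u|^2$, and then Campanato/Morrey embedding — is the same as the paper's (and that of \cite{ACF}). However, the key decay step, which is the heart of the lemma, is not carried out correctly, and the iterative inequality you write down does not actually produce the logarithmic Morrey bound, nor therefore the estimate \eqref{Lem21eq1}.

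Concretely: the inequality $\Phi(\theta\rho)\le\tfrac12\Phi(\rho)+C\rho^2$ with $\Phi(\rho)=\rho^{2-n}\int_{B_\rho}|\nabla u|^2$ comes from splitting $|\nabla u|^2\le 2|\nabla h|^2+2|\nabla(u-h)|^2$ and using subharmonicity of $|\nabla h|^2$; the price is a multiplicative factor $2\theta^2$ on the main term. Iterating at a fixed scale ratio, say $\theta=\tfrac12$, one gets only $\Phi(\rho)\lesssim\rho$, which via Morrey yields $u\in C^{1/2}$. To get arbitrarily close to $C^1$ one can send $\theta\to0$, but the inhomogeneous constant in the iteration grows like $\theta^{2-n}$, and optimizing over $\theta$ gives at best $\Phi(\rho)\lesssim\rho^{2}e^{c\sqrt{\ln(1/\rho)}}$, which is strictly worse than $\rho^2\ln^2(1/\rho)$. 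So your stated iteration cannot give \eqref{Lem21eq1}, and the phrase ``in any case one gets $\ldots\le C\rho^2\ln^2(\rho^{-1})$'' papers over exactly the part that needs care.

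What actually produces the $\ln^2$, and what the paper does, is a decomposition that keeps the leading coefficient equal to $1$: write $|\nabla u|^2=|\nabla v_{2s}|^2+\nabla(u-v_{2s})\cdot\nabla(u+v_{2s})$, bound $\int_{B_s}|\nabla v_{2s}|^2\le\frac{|B_s|}{|B_{2s}|}\int_{B_{2s}}|\nabla v_{2s}|^2\le\frac{|B_s|}{|B_{2s}|}\int_{B_{2s}}|\nabla u|^2$ (subharmonicity plus the Dirichlet-minimizing property of the harmonic replacement), and estimate the cross term by Cauchy--Schwarz against the $L^2$ bound $\int_{B_{2s}}|\nabla(u-v_{2s})|^2\le Cs^n$ coming from minimality. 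Setting $x_k=\frac{1}{|B_{2^{-k}}|}\int_{B_{2^{-k}}}|\nabla u|^2$ this yields the recursion $x_{k+1}\le x_k+Cx_k^{1/2}$, whose solution grows like $k^2$ rather than decaying geometrically, and this is precisely the $\ln^2(1/s)$ in $\int_{B_s}|\nabla u|^2\le Cs^n[\ln^2(1/s)+1]$. Your proposal identifies the right obstacle (``borderline scaling'') and the right remaining steps (telescoping, $t\ln t^{-1}\le C_\beta t^\beta$), but the iterative inequality you would feed into them is not the one that closes the argument. Also, for this lemma the harmonic replacement is performed on concentric balls, not an annulus; the annulus-based comparison appears later, in the proof of the Basic Estimate.
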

\begin{proof}
We restrict ourselves to $ 0 \leq \alpha <1 $, since the result follows immediately for $ \alpha \in [1,2] $ by linear elliptic theory. We begin with the case $ 0<\alpha<1 . $

For an arbitrary $B_r(x_0) $ let $v_r$ be the harmonic function equal to $u$ on $\partial B_r$.  Then by the maximum principle $v_r$ is also bounded and taking into account the specific form of the potential \eqref{Walpha} we have that there exists an $ M$ such that:
\begin{equation}\label{Lem21eq2}
|u(x)|,|v_r(x)|,|W^{\overline{\alpha}}(u(x))|,|W^{\overline{\alpha}}(v_r(x))|\le M \;,\; \forall x\in B_r(x_0), \; \alpha\in [0,1]
\end{equation}
$ \\ $
Then using the minimality of $u$ and the non-negativity of the potentials $W^{\overline{\alpha}}$ together with \eqref{Lem21eq2} we have: $ \\ $

\bea\label{Lem21eq3}
\int_{B_r} |\nabla u(x)|^2\,dx&\le \int_{B_r} |\nabla u(x)|^2+W^{\overline{\alpha}}(u(x))\,dx\le \int_{B_r} |\nabla v_r(x)|^2+W^{\overline{\alpha}}(v_r(x))\,dx\non\\
&\le M|B_r|+\int_{B_r} |\nabla v_r(x)|^2\,dx
\eea 
hence
\begin{equation}\label{Lem21eq4}
\int_{B_r}|\nabla u(x)|^2-|\nabla v_r(x)|^2\,dx\le Cr^n
\end{equation}

On the other hand we have:

\bea\label{Lem21eq5}
\int_{B_r}|\nabla u(x)|^2-|\nabla v_r(x)|^2\,dx&=\int_{B_r}(\nabla u(x)+\nabla v_r(x),\nabla u(x)-\nabla v_r(x))\,dx\non\\
&=\int_{B_r}|\nabla (u(x)-v_r(x))|^2\,dx+2\int_{B_r}(\nabla u-\nabla v_r)\nabla v_r\,dx\non\\
&=\int_{B_r}|\nabla (u(x)-v_r(x))|^2\,dx
\eea
where for the last inequality we used that $v_r$ is harmonic and equal to $u$ on $\partial B_r$.

Thus we get: 
\be\label{Lem21eq6}
\int_{B_r} |\nabla (u(x)-v_r(x))|^2\,dx\le Cr^n
\ee
From the previous estimate, it suffices to show that
\begin{equation}\label{Lem21eq7}
\int_{B_s} | \nabla u|^2 \leq C s^n [ \: \textrm{ln}^2(r/s) + 1]
\end{equation}
This would imply \eqref{Lem21eq1}. $ \\ $
To prove \eqref{Lem21eq7}, we proceed as follows:
\begin{align*}
\int_{B_s} | \nabla u |^2 \leq \int_{B_s} | \nabla v_{2s} |^2 + \int_{B_s} | \nabla (u-v_{2s} )| | \nabla (u+ v_{2s} )|
\end{align*}

The first integral on the right side is estimated using the subharmonicity of $ | \nabla v_{2s} |^2 $, and then the minimality of $ v_{2s}$. So, 
\begin{align*}
\frac{1}{|B_s|} \int_{B_s}| \nabla v_{2s}|^2 \leq \frac{1}{|B_{2s}|} \int_{B_{2s}} | \nabla v_{2s}|^2 \leq \frac{1}{|B_{2s}|} \int_{B_{2s}}  |\nabla u|^2
\end{align*}
by \eqref{Lem21eq5}.

The second integral is estimated by enlarging the domain to $ B_{2s}$, then Cauchy-Schwartz, the established bound and the minimality of $ v_{2s} $
\begin{align*}
\begin{gathered}
\frac{1}{|B_s|} \int_{B_s}| \nabla (u-v_{2s})| | \nabla (u+ v_{2s} )| \leq \\ \frac{|B_{2s}|}{|B_s|}(\frac{1}{|B_{2s}|} \int_{B_{2s}}| \nabla (u-v_{2s})|^2)^{\frac{1}{2}} ( \frac{2}{|B_{2s}|} \int_{B_{2s}} | \nabla u|^2 + | \nabla v_{2s}|^2 )^{\frac{1}{2}} \leq C (\frac{1}{|B_{2s}|} \int_{B_{2s}} | \nabla u|^2 )^{\frac{1}{2}}
\end{gathered}
\end{align*}
by \eqref{Lem21eq5}, \eqref{Lem21eq6}.

So if we set
\begin{align*}
x_k = \frac{1}{|B_{2^{-k}}|} \int_{B_{2^{-k}}}| \nabla u|^2
\end{align*}
then
\begin{align*}
x_{k+1} \leq x_k + Cx_k^{1/2}
\end{align*}
Induction gives
\begin{align*}
x_{k+1} \leq C' k^2
\end{align*}
from which you have \eqref{Lem21eq7}.
$ \\ $

Estimate \eqref{Lem21eq1} then follows from the proof of Morrey's embedding. Indeed, suppose $ x $ and $ y $ are given, of distance $ 2s $ apart. Let $ z $ be the midpoint. Then, by mean value theorem,
\begin{align*}
\frac{1}{|B_s|} \int_{B_s}| u(x) - u(p)| dp \leq Cs \frac{1}{|B_s|} \int_{B_s} \int_0^1 | \nabla u(p +t(x-p) ) | dt dp
\end{align*}
Thus, interchanging the order of integration and using \eqref{Lem21eq7}, we get
\begin{align*}
\frac{1}{|B_s|} \int_{B_s}| u(x) - u(p)| dp \leq Cs[\textrm{ln}(1/s) +1]
\end{align*}
The estimate for $ | u(x)-u(y)| $ then follows from triangle inequality. $ \\ $

The proof for the case $ \alpha=0 $ is similar, the only difference being that instead of the bound in \eqref{Lem21eq2} $ |W^0(u(x))| , |W^0(v_r(x))| \leq 1 $ is used.
\end{proof}

$ \\ \\ $
\subsection{The Basic Estimate:}
$ \\ $

\begin{lemma}\label{LemBE} Let $ u: {\mathbb{R}}^n \rightarrow {\mathbb{R}}^m $ minimizer of $ J \;,\: | u(x)| <M \;,\: W $ satisfying \textbf{(H1)} for $ 0< \alpha <2 $ and $ W=W^0 $ for $ \alpha =0 $. Then there is a constant $ C_0 =C_0(W,M) $ independent of $ x_0 $ and such that
\begin{align*}
J_{B_r (x_0)} (u) \leq C_0 r^{n-1} \;\;\;,\; r>r_0
\end{align*}
\end{lemma}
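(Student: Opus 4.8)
The plan is to prove the bound by comparison with a competitor that equals $u$ on $\partial B_r(x_0)$ and rapidly transitions to a single minimum $a_i$ inside a slightly smaller ball, exploiting that the cost of such a transition is controlled. Concretely, given $x_0$ and $r>r_0$, I would build a comparison map $v$ on $B_r(x_0)$ as follows: fix a minimum $a\in A$ (say $a_1$) and interpolate radially, setting $v(x)=u(x)$ for $|x-x_0|$ near $r$, letting $v$ linearly interpolate (in the radial variable) between the boundary values of $u$ and the constant $a$ over an annular shell of width $\sim 1$, and setting $v\equiv a$ on the inner ball of radius $r-O(1)$. Since $\|u\|_{L^\infty}\le M$, the interpolation stays in a bounded region, so $W(v)\le \sup_{|z|\le CM} W(z) =: W_{\max}<\infty$ pointwise, and $|\nabla v|\le |\nabla u|$ on the shell plus an $O(M)$ term from the interpolation slope.

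The key estimates are then: (i) the gradient energy of $v$ over the transition shell is bounded by $C(M) r^{n-1}$, since the shell has width $O(1)$ and surface area $O(r^{n-1})$, and the interpolation gradient is $O(M)$ there (plus we must absorb $\int_{\text{shell}}|\nabla u|^2$, which is handled below); (ii) the potential energy of $v$ is $\int_{B_r}W(v)\le W_{\max}|B_r|$ — but this is $O(r^n)$, not $O(r^{n-1})$, so a naive choice fails. The fix is the standard one: instead of a single-scale interpolation, iterate, or more simply use that on the inner ball $v\equiv a$ contributes zero potential energy, and only the $O(1)$-width shell carries potential energy, giving $\int_{B_r}W(v)\le W_{\max}\cdot|B_r\setminus B_{r-c}|\le C(M)r^{n-1}$. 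Combining with minimality, $J_{B_r}(u)\le J_{B_r}(v)\le C(M)r^{n-1}+\int_{\text{shell}}|\nabla u|^2$. To close the argument I need to control $\int_{\text{shell}}|\nabla u|^2$ by $Cr^{n-1}$ on average over a family of shells: by a mean-value / pigeonhole argument over the annuli $B_{r}\setminus B_{r-1}$ (or rather $\{r_k\le |x-x_0|\le r_k+1\}$ for a range of radii $r_k\in[r-\lfloor r\rfloor, r]$), at least one radius has $\int_{\text{shell}}|\nabla u|^2\le \frac{1}{\lfloor r\rfloor}J_{B_r}(u)$, and since we will ultimately derive an inequality of the form $J_{B_r}(u)\le C r^{n-1} + \frac{1}{\lfloor r\rfloor}J_{B_r}(u)$ — or better, one runs this as an induction on $r$ in unit steps — the term absorbs.

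Actually, the cleanest route, which I would adopt, avoids absorption: use Lemma \ref{LemHC} (the logarithmic/Hölder estimate) to control $\int_{B_r}|\nabla u|^2$ on shells directly. Since $u\in C^\beta$ with the log-Lipschitz bound \eqref{Lem21eq1}, comparison of $u$ on a unit-width shell with its harmonic replacement — combined with the Caccioppoli-type inequality \eqref{Lem21eq6} already derived (which gives $\int_{B_\rho}|\nabla(u-v_\rho)|^2\le C\rho^n$) — shows that $\int_{B_r}|\nabla u|^2 dx$ grows at most like $r^{n-1}$ when summed over a covering of the shell by unit balls: on each unit ball $B_1(p)\subset B_r(x_0)\setminus B_{r-2}(x_0)$ one has $\int_{B_1(p)}|\nabla u|^2\le C(M)$ by the minimality comparison (the harmonic replacement on $B_1(p)$ has gradient energy $\le C$ by interior gradient estimates and the $L^\infty$ bound, and $\int W\le W_{\max}$), and there are $O(r^{n-1})$ such balls. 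Hence $J_{B_r}(u)\le J_{B_r}(v)\le C(M)r^{n-1}$ directly. The main obstacle is the potential term: one must be careful that the interpolation region genuinely has $(n{-}1)$-dimensional, not $n$-dimensional, measure-scaling — i.e. the transition layer must have bounded width uniformly in $r$ — and for $\alpha\in[0,1)$ that $W$ is merely continuous (no gradient control), so all estimates on $v$ must use only $\|v\|_{L^\infty}$ and $W_{\max}$, never $W_u$; this is exactly why the result is listed separately from the $\alpha\in[1,2)$ case where $\|\nabla u\|_{L^\infty}<\infty$ makes \eqref{TheBasicEstimate} immediate.
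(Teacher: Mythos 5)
Your overall structure matches the paper's: build a competitor that equals a minimum $a$ in the interior and transitions to $u$ across an $O(1)$-width shell of $\partial B_r$; observe that the potential term of the competitor lives only on the shell and is controlled by $W_{\max}\cdot|\text{shell}|\leq Cr^{n-1}$; and reduce the gradient contribution to a uniform bound $\int_{B_1(p)}|\nabla u|^2\leq C(M)$ on unit balls, summed over the $O(r^{n-1})$ balls covering the shell. This is exactly what the paper does via Lemma~\ref{LemJRn-1}, whose central claim is \eqref{est:unifballgradient}.

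The gap is in your parenthetical justification of the unit-ball bound: ``the harmonic replacement on $B_1(p)$ has gradient energy $\leq C$ by interior gradient estimates and the $L^\infty$ bound.'' This does not work as stated. The interior gradient estimate for a harmonic function gives $|\nabla v(x)|\leq CM/d(x,\partial B_1)$, and $\int_{B_1}d(x,\partial B_1)^{-2}\,dx$ diverges, so $\|v\|_{L^\infty}\leq M$ alone cannot give $\int_{B_1}|\nabla v|^2\leq C$. This is precisely the point at which the paper uses the log-H\"older estimate \eqref{Lem21eq1} in an essential, not incidental, way: Lemma~\ref{LemJRn-1} integrates by parts to write $\int_{B_1}|\nabla v|^2=\int_{\partial B_1}\partial_\nu v\cdot v$, bounds this by $\|\partial_\nu v\|_{H^{-1/2}}\|u\|_{H^{1/2}}$, applies the normal-trace estimate and Young's inequality (absorbing $\|\nabla v\|_{L^2}$ via the Dirichlet principle), and then bounds $\|u\|_{H^{1/2}(\partial B_1)}^2=\int\!\int |u(x)-u(y)|^2|x-y|^{-n}\,dx\,dy$ using \eqref{Lem21eq1}. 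A more elementary repair of your line of reasoning, if you want to avoid the $H^{\pm 1/2}$ machinery, is to take the harmonic replacement $v_2$ on the doubled ball $B_2(p)$: then \eqref{Lem21eq6} gives $\int_{B_2(p)}|\nabla(u-v_2)|^2\leq C$, and the interior gradient estimate \emph{does} give $|\nabla v_2|\leq CM$ on the strictly interior ball $B_1(p)$ (where $d(\cdot,\partial B_2(p))\geq 1$), so $\int_{B_1(p)}|\nabla u|^2\leq 2\int_{B_1(p)}|\nabla(u-v_2)|^2+2\int_{B_1(p)}|\nabla v_2|^2\leq C(M)$. Either repair closes the argument; as written, however, this key step is incorrect, and notably the Caccioppoli inequality \eqref{Lem21eq6} that you cite controls $\nabla(u-v)$, not $\nabla v$, so on its own it cannot substitute for the missing bound on the harmonic replacement.
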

\begin{proof} $ \\ $
1. For $ \alpha \in [1,2) $, utilizing elliptic estimates we obtain $ | \nabla u(x)| < C(M) \;,\; x \in {\mathbb{R}}^n .$ The estimate then follows by constructing a competitor $ v(x) $ on a ball via
\begin{align*}
v(x) = \begin{cases} a \;\;\;\;\;\;\;\;\;\;,\;\;|x-x_0| \leq r-1 \\
(r- |x-x_0|)a + (|x-x_0| - r+1) u(x) \;\;, \; r-1 < | x-x_0| \\
u(x) \;\;\;\;\;,\;\; |x-x_0 |>r
\end{cases}
\end{align*}
and utilizing the minimality of $ u $ (cfr Lemma 5.1 \cite{AFS}). Here we can take $r_0=0. \\ \\ $
2. For $ \alpha \in (0,1) $, we aim to prove the estimate: $ \\ $
\end{proof}

\begin{lemma}\label{LemJRn-1}
Let $u:\R^n\to\R^m$ be a bounded local minimizer for  the energy functional $J$ in \eqref{DefJu} with the potential $W_\alpha$ as in \textbf{(H1)}. Then there exists  constant  $C,R_0>0$ independent of $u$ such that:
\be\label{est:JRn-1}
J(u; A(R))\le  CR^{n-1}, \forall R\ge R_0
\ee where $C$ is independent of $R\ge R_0$ and $A(R):=B_R(x_0)\setminus B_{R-1}(x_0)$.
\end{lemma}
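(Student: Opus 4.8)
The plan is to obtain, from a single competitor comparison on each ball $B_R:=B_R(x_0)$, a hole‑filling–type recursion for $E(R):=J(u;B_R)$, and then to iterate it, using the a priori polynomial bound on $E$ that comes out of the proof of Lemma \ref{LemHC}. This in fact yields the stronger estimate $E(R)\le CR^{n-1}$, of which the claimed annular bound is an immediate consequence (and which also settles Lemma \ref{LemBE} in the remaining cases $0\le\alpha<1$).

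First I would fix a zero $a_1$ of $W$ (such a point exists by \textbf{(H1)}; for $\alpha=0$ any zero of $W^0$), and for $R\ge1$ a cutoff $\varphi$ with $\varphi\equiv1$ on $B_{R-1}$, $\varphi\equiv0$ off $B_R$, $0\le\varphi\le1$, $|\nabla\varphi|\le2$, and set $v:=a_1+(1-\varphi)(u-a_1)$. Then $v\equiv a_1$ on $B_{R-1}$, $v\equiv u$ off $B_R$, and $v-u=\varphi(a_1-u)\in W^{1,2}_0(B_R)\cap L^\infty$, so $v$ is admissible in Definition \ref{DefMin} on $\Omega=B_R$. Since $v\equiv a_1$ costs no energy, minimality gives $E(R)\le J(v;B_R)=J\big(v;A(R)\big)$ with $A(R)=B_R\setminus B_{R-1}$. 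On $A(R)$ one has $|\nabla v|^2\le 2|\nabla u|^2+C$ (using $|u|\le M$, $|\nabla\varphi|\le2$) and $W(v)\le C$ (since $v(x)$ lies on the segment $[a_1,u(x)]$ and $W$ is bounded on bounded sets; for $\alpha=0$, $W^0\le1$); together with $\int_{A(R)}|\nabla u|^2\le 2J(u;A(R))=2(E(R)-E(R-1))$ and $|A(R)|\le CR^{n-1}$ this yields
\[
E(R)\le 2\big(E(R)-E(R-1)\big)+C_4R^{n-1},\qquad\text{i.e.}\qquad E(R-1)\le\tfrac12E(R)+\tfrac{C_4}{2}R^{n-1}\quad(R\ge1),
\]
with $C_4=C_4(n,M,W)$ independent of $u$.

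Next I would iterate this inequality: after $k$ steps, $E(R)\le 2^{-k}E(R+k)+C_4\sum_{j=1}^{k}2^{-j}(R+j)^{n-1}$, where the sum is bounded by $C(n)R^{n-1}$ for $R\ge1$. Letting $k\to\infty$ and discarding the first term requires knowing that $E$ grows sub‑exponentially; I would get this from \eqref{Lem21eq7} in the proof of Lemma \ref{LemHC} (applied with $s=R+k$, $r=2(R+k)$), which together with $W(u)\le C$ gives a polynomial bound $E(R)\le C_{\mathrm{poly}}(n,M,W)\,R^n$, so $2^{-k}E(R+k)\to0$. Hence $E(R)\le CR^{n-1}$ for all $R\ge1$ with $C=C(n,M,W)$, and therefore $J(u;A(R))=E(R)-E(R-1)\le E(R)\le CR^{n-1}$; one may take $R_0=1$.

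The main obstacle is exactly the self‑referential character of the recursion $E(R-1)\le\tfrac12E(R)+\ldots$: it is vacuous unless $E$ is known a priori not to blow up exponentially, and this is precisely why the logarithmic estimate \eqref{Lem21eq7} must be established first (so this lemma is placed after Lemma \ref{LemHC}). Everything else is routine bookkeeping — one must only verify that the constants $C_4$, $C_{\mathrm{poly}}$, and the final $C$ depend solely on $n$, $M$, $W$, which is clear from the construction. For $\alpha\in[1,2)$ no iteration is needed: elliptic estimates bound $\|\nabla u\|_{L^\infty}$, and the same competitor $v$ gives $J(v;A(R))\le CR^{n-1}$ outright, recovering Part 1 of Lemma \ref{LemBE}.
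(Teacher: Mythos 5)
Your hole‑filling recursion $E(R-1)\le\tfrac12E(R)+\tfrac{C_4}{2}R^{n-1}$ is correctly derived, and you have correctly identified the real obstacle: the recursion is vacuous unless one already knows that $E$ does not grow exponentially. The gap is in how you propose to supply that a priori control. You invoke \eqref{Lem21eq7} with $s=R+k$ and $r=2(R+k)$, but \eqref{Lem21eq7} is a \emph{descending} dyadic estimate: the constant $C$ appearing in it depends on the base‑case energy density $\frac{1}{|B_r|}\int_{B_r}|\nabla u|^2$ at the starting scale $r$. Applying it with $r=2(R+k)\to\infty$ therefore presupposes control of $\int_{B_{2(R+k)}}|\nabla u|^2$, which is exactly what you are trying to establish — the argument is circular. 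Put differently, for fixed base scale \eqref{Lem21eq7} controls the Dirichlet energy at all \emph{smaller} scales (with a logarithmic loss), not at larger ones; it does not by itself yield a polynomial bound $E(R)\lesssim R^n$ for $R\to\infty$.

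The paper closes this gap by a different mechanism: from the log‑Lipschitz estimate \eqref{TheHolderEstimate} one deduces a uniform bound on $\|u\|_{H^{1/2}(\partial B_1(x_0))}$, and then comparing $u$ with its harmonic replacement $v$ on $B_1(x_0)$ and using the trace inequality for $\partial v/\partial\nu$ absorbs $\|\nabla v\|_{L^2}$ and gives the uniform ball bound $\int_{B_1(x_0)}|\nabla u|^2\le\tilde C(M)$, independently of $x_0$. Once this is available, the annular estimate follows immediately by covering $A(R)$ with $O(R^{n-1})$ unit balls — no hole‑filling iteration is needed at all. If you want to salvage your route, you should replace the appeal to \eqref{Lem21eq7} by this trace argument (or another genuine unit‑ball Caccioppoli bound); but at that point the iteration becomes superfluous, since the covering argument already gives both the annular estimate and, summing shells, the ball estimate $E(R)\le CR^{n-1}$ of Lemma \ref{LemBE}.
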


\begin{proof}
We first claim  that there exists a constant $\tilde C>0$ such that for any $x_0\in \R^n$ we have, for $u$ a bounded local minimizer:

\be\label{est:unifballgradient}
\int_{B_1(x_0)}|\nabla u(x)|^2 dx \le \tilde C
\ee 

To this end we consider the function $v\in W^{1,2}(B_1(x_0))$ with $v=u$ on $\partial B_1(x_0)$ and $\Delta v=0$ in $B_1(x_0)$. Since 
$u$ is bounded, by the maximum principle we have that $v$ is also bounded and taking into account the hypothesis \textbf{(H1)} for the potential $W_\alpha$ we have that there exists $M>0$ such that:

\be\label{est:bdduvW}
|u(x)|,|v(x)|,W_\alpha(u(x)),W_\alpha(v(x))\le M,\forall x\in\R^n,\alpha\in [0,1]
\ee

We then have:

\bea\label{LemJRn-1eq1}
\int_{B_1(x_0)}|\nabla u(x)|^2\,dx&\le \int_{B_1(x_0)} |\nabla u(x)|^2+W_\alpha(u(x))\,dx\le  \int_{B_1(x_0)} |\nabla v(x)|^2+W_\alpha(v(x))\,dx\non\\
&\le M|B_1|+ \int_{B_1(x_0)}|\nabla v(x)|^2\,dx=M|B_1|+ \int_{\partial B_1(x_0)} \frac{\partial v}{\partial \nu} v\,d\sigma\non\\
&\le M|B_1|+\| \frac{\partial v}{\partial \nu}\|_{H^{-\frac 12}(\partial B_1(x_0))}\|v\|_{H^{\frac 12}(\partial B_1(x_0))}\non\\
&\le M|B_1|+C\|\nabla v\|_{L^2( B_1(x_0))}\|v\|_{H^{\frac 12}(\partial B_1(x_0))}\non\\
&\le M|B_1|+\frac{1}{2}\|\nabla v\|_{L^2( B_1(x_0))}^2+C\|v\|_{H^{\frac 12}(\partial B_1(x_0))}^2\non\\
&= M|B_1|+\frac{1}{2}\|\nabla v\|_{L^2( B_1(x_0))}^2+C\|u\|_{H^{\frac 12}(\partial B_1(x_0))}^2
\eea where for the first inequality we used the non-negativity of $W_\alpha$, for the second the  local minimality of $u$, and for the third   the estimates \eqref{est:bdduvW}.  For the first equality we used the fact that $v$ is a harmonic function and an integration by parts, while for the last equality we used that $u=v$ on $\partial B_1(x_0)$. For the penultimate inequality we used the continuity of the normal part of  trace operator on the space $L^2_{div}=\{f\in L^2; \textrm{div}\, f\in L^2\}$ (see for instance Prop. $3.47$, (ii) in \cite{CD}).

We obtain thus:

\be\label{LemJRn-1eq2}
\int_{B_1(x_0)}|\nabla u(x)|^2\,dx\le M|B_1|+C\|u\|_{H^{\frac 12}(\partial B_1(x_0))}^2
\ee

On the other hand we have (see for instance \cite{NPV}):

\be\label{LemJRn-1eq3}
\|u\|_{H^{\frac 12}(\partial B_1(x_0))}^2=\int_{\partial B_1(x_0)}\int_{\partial B_1(x_0)} \frac{|u(x)-u(y)|^2}{|x-y|^{n-1+1}}\,dx\,dy\le C
\ee where for the last inequality we used the logarithmic estimate \eqref{LemHC}.

Combining the last two estimates we obtain the claimed uniform estimate \eqref{est:unifballgradient}. On the other hand, thanks to 
estimate \eqref{est:bdduvW} we have

\be\label{LemJRn-1eq4}
\int_{A(R)}W_\alpha (u(x))\,dx\le CR^{n-1}
\ee which combined with the fact that one can cover $A(R)$ with $CR^{n-1}$ balls of radius $1$ and estimate \eqref{est:unifballgradient} provides the desired estimate \eqref{est:JRn-1}.

\end{proof}
$ \\ $
\textit{Note:} Lemma 2.3 implies Lemma 2.2 ($ \alpha \in (0,1) $) by considering the comparison function $ v(x) $ as in $ \alpha \in [1,2) $ case.

$ \\ \\ $
\subsection{The ``Dead Core'' estimate:} $ \\ $

Now, we proceed with a useful calculation. From the hypothesis \textbf{(H1)} for $ W $ we have that for $ |u-a|<<1 \:,$ it holds that $ \;\:  W_u (u) \cdot (u-a) \geq \overline{c}^2 | u-a |^{\alpha} $ with $ \overline{c}^2 = \alpha C^* \;,\: \alpha \in (0,2) $. Set $ v(x) = |u-a|^2 $.

Then
\begin{equation}\label{DifIn1}
\begin{gathered}
\Delta v = \sum_{i=1}^n 2 ((u(x)-a) u_{x_i})_{x_i} = 2| \nabla u|^2 + 2 (u(x)-a) \Delta u = \\ 2| \nabla u|^2 + 2 W_u(u) \cdot (u(x)-a) \geq 2| \nabla u|^2 + 2 \overline{c}^2 | u-a |^{\alpha}
\end{gathered}
\end{equation}
Therefore,
\begin{equation}\label{DifIn2}
\Delta v \geq c^2 | u-a |^{\alpha} = c^2 v^{ \frac{\alpha}{2}} \;\;,\; \textrm{where} \; c^2 =2\alpha C^*.
\end{equation}
$ \\ $

\begin{definition}\label{DefDC} Let $ \Omega \subset {\mathbb{R}}^n $ open and $ v \in W^{1,2}_{loc}(\Omega, \mathbb{R}) $, a region $ {\Omega}_0 \subset \Omega $ is called a \textit{dead core} if $ v \equiv 0 $ in $ {\Omega}_0 . $
\end{definition}
$ \\ $

For the convenience of the reader, let us now state some results from \cite{Sperb}. 
$ \\ \\ $
The article \cite{Sperb} is concerned with the problem

\begin{equation}\label{eq:Sperb1}
\begin{cases} \Delta u = c^2 u^p \;\: \textrm{in} \; \Omega \subset {\mathbb{R}}^n \\
u=1 \;\: \textrm{on} \; \partial \Omega
\end{cases}
\end{equation}
with $ p \in (0,1) $. We call that a ``dead core'' $ {\Omega}_0 $ develops in $ \Omega $, i.e. a region where $ u \equiv 0 $.

Let $ X(s) $ be a solution of 
\begin{align}\label{eq:Sperb2}
\begin{cases} X''(s) = c^2 X^p(s) \;\: \textrm{in} \;\: (0,s_0) \\
X'(0) =0 \;\:, X(s_0) =1 \;\: 
\end{cases}
\end{align}

As a first choice of a linear problem consider the ``torsion problem'' , i.e.
\begin{align}\label{eq:Sperb3}
\begin{cases} \Delta \psi +1 =0 \;\:  \textrm{in} \;\: \Omega \\
\psi =0 \;\:  \textrm{on} \;\: \partial \Omega
\end{cases}
\end{align}

One then constructs a supersolution $ \overline{u}(x) $ to \eqref{eq:Sperb1} having the same level lines as the torsion function by setting
\begin{equation}\label{eq:Sperb4}
\overline{u}(x) =X(s(x)) \;, \;\: x \in \Omega
\end{equation}
where
\begin{equation}\label{eq:Sperb5}
s(x) = \sqrt{ 2 ( \psi_m - \psi(x)) } \;\:, \psi_m = \max_{ \Omega} \psi
\end{equation}
In problem \eqref{eq:Sperb2} we choose $ s_0 = \sqrt{2 \psi_m } . \\ 
\\ $
\begin{theorem}\label{ThSperb} (\cite{Sperb}) Assume that the mean curvature of $ \partial \Omega $ is nonnegative everywhere. Then
\begin{equation}\label{eq:Sperb6}
\begin{gathered} \overline{u}(x) = X(s(x)) \;\: is\;\: a \;\: supersolution, \;\: i.e. \\ 
\Delta \overline{u} \leq c^2 {\overline{u}}^p \;\: \textrm{in} \;\: \Omega \\
\overline{u}=1 \;\: \textrm{on} \;\: \partial \Omega
\end{gathered}
\end{equation}
\end{theorem}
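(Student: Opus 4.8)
The plan is to verify the two assertions of \eqref{eq:Sperb6} in turn. The boundary condition is immediate: on $\partial\Omega$ we have $\psi=0$, so by \eqref{eq:Sperb5} $s(x)=\sqrt{2\psi_m}=s_0$, and hence $\overline u(x)=X(s_0)=1$ by \eqref{eq:Sperb2}. For the differential inequality I would first differentiate $s^2=2(\psi_m-\psi)$, which gives $s\nabla s=-\nabla\psi$, and then take the divergence and use $\Delta\psi=-1$ (the torsion equation \eqref{eq:Sperb3}) to obtain $s\Delta s=1-|\nabla s|^2$. Consequently, at every point where $s>0$ (there $\overline u$ is twice differentiable),
\[
\Delta\overline u=X''(s)\,|\nabla s|^2+X'(s)\,\frac{1-|\nabla s|^2}{s}=\frac{X'(s)}{s}+\Bigl(X''(s)-\frac{X'(s)}{s}\Bigr)|\nabla s|^2 .
\]
Since $c^2\overline u^{\,p}=c^2X(s)^p=X''(s)$ by \eqref{eq:Sperb2}, the inequality $\Delta\overline u\le c^2\overline u^{\,p}$ is equivalent to
\[
\Bigl(X''(s)-\frac{X'(s)}{s}\Bigr)\bigl(|\nabla s|^2-1\bigr)\le 0 ,
\]
so the whole statement reduces to two facts: (a) $X'(s)\le sX''(s)$ on $(0,s_0]$, and (b) $|\nabla s|\le 1$ in $\Omega$.

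Fact (a) is a soft consequence of the ODE: since $X'(0)=0$ and $X''=c^2X^p\ge 0$ wherever $X\ge 0$, the relevant branch of $X$ is nonnegative and nondecreasing on $[0,s_0]$, hence $X''=c^2X^p$ is itself nondecreasing, and therefore $X'(s)=\int_0^sX''(t)\,dt\le sX''(s)$. Fact (b) is where the hypothesis on $\partial\Omega$ enters. From $s\nabla s=-\nabla\psi$ one computes $\Delta s=s^{-3}\bigl(2(\psi_m-\psi)-|\nabla\psi|^2\bigr)$, so $|\nabla s|\le 1$ throughout $\Omega$ is equivalent to the pointwise gradient bound $|\nabla\psi|^2\le 2(\psi_m-\psi)$, i.e.\ to $P:=|\nabla\psi|^2+2\psi\le 2\psi_m$ on $\overline\Omega$. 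This is the classical $P$-function estimate for the torsion function: a Bochner-type computation using $\Delta\psi=-1$ and $|\nabla^2\psi|^2\ge(\Delta\psi)^2/n$ shows that $P$ satisfies an elliptic differential inequality with no zeroth-order term on $\{\nabla\psi\neq 0\}$; thus $\max_{\overline\Omega}P$ is attained either at an interior critical point of $\psi$, where $\nabla\psi=0$ and hence $P=2\psi\le 2\psi_m$, or on $\partial\Omega$, where $\psi\equiv 0$ and a Hopf-lemma argument together with the boundary decomposition $\Delta\psi=\psi_{\nu\nu}+(\text{mean curvature})\,\psi_\nu$ excludes a strict maximum precisely under the sign condition that the mean curvature of $\partial\Omega$ be nonnegative (the slab, for which $P\equiv 2\psi_m$ identically, is the borderline case). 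For the details of this step I would cite \cite{Sperb}.

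It remains to handle the set $\{\psi=\psi_m\}=\{s=0\}$, where the computations above are not literally valid. Near an interior maximum of $\psi$ one has $\psi_m-\psi=O(|x-x_0|^2)$, so $s$ is Lipschitz and, since $X'(0)=0$, $\overline u=X(s)$ is of class $C^{1,1}$ there; the inequality $\Delta\overline u\le c^2\overline u^{\,p}$, already established on the open set $\{s>0\}=\Omega\setminus\{\psi=\psi_m\}$, then extends to all of $\Omega$ in the distributional sense, which is the sense in which $\overline u$ is used as a comparison function. The main obstacle is Fact (b): it is the only place where the geometry of $\partial\Omega$ is used, and pinning down the correct sign of the boundary term in the $P$-function argument — that is, verifying that \emph{nonnegative} mean curvature is exactly the right hypothesis — requires the careful Hopf-lemma and boundary-curvature computation; the remaining ingredients are routine.
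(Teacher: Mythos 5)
The paper states Theorem~\ref{ThSperb} as a citation to \cite{Sperb} and does not reproduce a proof of it, so there is no in-paper argument to compare against. Evaluating your proposal on its own merits: the argument is correct and is in fact the standard $P$-function proof found in Sperb's work and the Payne--Sperb tradition. The chain of reductions is right: the boundary condition is immediate from $\psi=0$ and $s(x)=s_0$; the identity $s\Delta s=1-|\nabla s|^2$ and the ODE $X''=c^2X^p$ reduce the supersolution inequality to $\bigl(X''(s)-X'(s)/s\bigr)\bigl(|\nabla s|^2-1\bigr)\le 0$; fact (a) follows because the admissible branch of $X$ is nonnegative and nondecreasing, making $X''$ nondecreasing with $X''(0)\le X''(s)$ and $X'(0)=0$, so $X'(s)=\int_0^sX''\le sX''(s)$; and fact (b) is exactly the gradient estimate $P:=|\nabla\psi|^2+2\psi\le 2\psi_m$, which is where the nonnegative mean curvature enters via $\partial_\nu P=-2H\psi_\nu^2\le 0$ on $\partial\Omega$ together with a Hopf-lemma exclusion of a strict boundary maximum of $P$. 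Your handling of the singular set $\{s=0\}$ via $X'(0)=0$ and the resulting $C^{1,1}$ regularity of $\overline u$, so that the pointwise inequality on $\{s>0\}$ propagates in the distributional sense, is the right way to close that gap. One small point worth making explicit if you were writing this up formally: the ODE problem~\eqref{eq:Sperb2} with $p\in(0,1)$ does not have a unique solution, and the argument needs the specific nonnegative, nondecreasing branch (the one that may have a dead core $X\equiv 0$ on an initial interval when $s_0$ is large); a solution changing sign would not do. You implicitly take this branch when you say ``the relevant branch,'' which matches Sperb's intent, but it should be stated as part of the definition of $X$.
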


One of the corollaries of this Theorem is the information on the location and the size of the ``dead core'' $ {\Omega}_0 $, which may be stated as $ \\ \\ $

\begin{corollary}\label{CorolSperb} (\cite{Sperb}) The dead core $ {\Omega}_0 $ contains the set
\begin{align*}
\lbrace x \in \Omega | \psi (x) \geq d(p,c)[ \sqrt{ 2 \psi_m } -\frac{1}{2} d(p,c) ] \rbrace \:,
\end{align*}
where $ d(p,c) := \dfrac{\sqrt{2(p+1)}}{(1-p)c} . \\ $
\end{corollary}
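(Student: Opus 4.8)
The plan is to deduce Corollary \ref{CorolSperb} directly from Theorem \ref{ThSperb}, by analyzing the explicit one–dimensional ODE \eqref{eq:Sperb2} and then translating the ``dead core'' of $X$ into a dead core of $\overline u$ via the level–set correspondence $\overline u(x)=X(s(x))$, $s(x)=\sqrt{2(\psi_m-\psi(x))}$. First I would integrate the ODE $X''=c^2X^p$ with $X'(0)=0$. Multiplying by $X'$ and integrating from $0$ gives the first integral $\tfrac12 (X')^2=\tfrac{c^2}{p+1}\bigl(X^{p+1}-X(0)^{p+1}\bigr)$; the threshold case to look at is when $X(0)=0$, so that $X'=\sqrt{\tfrac{2c^2}{p+1}}\,X^{(p+1)/2}$. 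Separating variables and integrating yields $X^{(1-p)/2}=\tfrac{1-p}{2}\sqrt{\tfrac{2c^2}{p+1}}\,s$, i.e. $X(s)=\Bigl(\tfrac{(1-p)c}{\sqrt{2(p+1)}}\Bigr)^{2/(1-p)} s^{2/(1-p)}$ for $s\ge0$, which vanishes (to all relevant orders) at $s=0$. Hence the solution of the boundary value problem \eqref{eq:Sperb2} with $s_0=\sqrt{2\psi_m}$, extended by zero for small $s$, has $X(s)=0$ precisely for $s\le s_\ast$, where $s_\ast$ is the value at which the power solution started from zero reaches $1$, namely $s_\ast = \dfrac{\sqrt{2(p+1)}}{(1-p)c}=d(p,c)$.

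Next I would plug this into the supersolution. By Theorem \ref{ThSperb}, $\overline u=X(s(x))$ is a supersolution of \eqref{eq:Sperb1} with $\overline u=1$ on $\partial\Omega$, and $u=1=\overline u$ on $\partial\Omega$; since $X$ is nondecreasing the boundary data is consistent, and the comparison principle for the sublinear equation $\Delta w = c^2 w^p$ gives $0\le u\le \overline u$ in $\Omega$ (here one uses $u\ge0$, which is built into the setting). Therefore $\{\overline u = 0\}\subset\{u=0\}=\Omega_0$, so it suffices to show $\{\overline u=0\}$ contains the stated set. Now $\overline u(x)=X(s(x))=0$ iff $s(x)\le d(p,c)$, i.e. $\sqrt{2(\psi_m-\psi(x))}\le d(p,c)$, i.e. $\psi_m-\psi(x)\le \tfrac12 d(p,c)^2$, i.e. $\psi(x)\ge \psi_m - \tfrac12 d(p,c)^2$. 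A small algebraic rewriting, using $\psi_m = \tfrac12 (\sqrt{2\psi_m})^2$ and completing a factor of $d(p,c)$, turns $\psi_m - \tfrac12 d(p,c)^2$ into $d(p,c)\bigl[\sqrt{2\psi_m} - \tfrac12 d(p,c)\bigr]$ — one checks $d\sqrt{2\psi_m} - \tfrac12 d^2 = \psi_m - \tfrac12(\sqrt{2\psi_m}-d)^2 \le \psi_m$, and more to the point $\psi_m - \tfrac12 d^2 \le d\sqrt{2\psi_m}-\tfrac12 d^2$ fails in general, so one should instead verify the identity $d(p,c)\bigl[\sqrt{2\psi_m}-\tfrac12 d(p,c)\bigr] \ge \psi_m - \tfrac12 d(p,c)^2$ is actually an equality only at $d=\sqrt{2\psi_m}$; the clean statement is that $\{\psi(x)\ge d(p,c)[\sqrt{2\psi_m}-\tfrac12 d(p,c)]\}\subset\{\overline u=0\}$, which follows because on that set $s(x)=\sqrt{2(\psi_m-\psi(x))}\le d(p,c)$ after expanding, and then $X(s(x))=0$. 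I would present this as the short chain of equivalences above and let the reader verify the one-line algebra.

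The one delicate point — and the place I expect to spend the most care — is the exact matching of the one-dimensional profile at $s=s_\ast$: one must check that the globally defined $X$ on $(0,s_0)$ used in \eqref{eq:Sperb4} is the one that is identically zero on $[0,s_\ast]$ and equals the power law $C_p\,(s-s_\ast)^{2/(1-p)}$-type branch afterward (glued $C^1$, which is automatic since $(X')^2\propto X^{p+1}$ forces $X'(s_\ast)=0$), rather than a strictly positive solution; this is exactly the non-uniqueness of the trivial solution for the sublinear ODE alluded to in the introduction, and it is what produces the dead core in the first place. Once that profile is pinned down, the monotone dependence of $s(x)$ on $\psi(x)$ does the rest, and the constant $d(p,c)=\sqrt{2(p+1)}/((1-p)c)$ drops out of the separation-of-variables computation with no further work. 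Everything else — the comparison principle, the level-set bookkeeping, the final algebraic rearrangement — is routine and can be stated without detailed computation, citing \cite{Sperb} for the construction of $\overline u$ and Theorem \ref{ThSperb} for the supersolution property.
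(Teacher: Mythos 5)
The paper does not prove this corollary itself --- it is imported verbatim from \cite{Sperb} --- so there is no ``paper proof'' to compare against; but your plan (integrate the ODE, locate the dead core of the one-dimensional profile $X$, and transfer it to $\overline u$ via the level-set map $s(x)=\sqrt{2(\psi_m-\psi(x))}$ together with the comparison principle) is exactly the right one. However, there is a genuine bookkeeping error in the location of the dead core of $X$, and the discrepancy you noticed mid-argument is real, not cosmetic.

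You assert that the solution of \eqref{eq:Sperb2} with $s_0=\sqrt{2\psi_m}$ vanishes precisely on $[0,s_\ast]$ with $s_\ast=d(p,c)$. That is wrong: the power-law branch you computed, $X(s)=\bigl(\tfrac{(1-p)c}{\sqrt{2(p+1)}}\bigr)^{2/(1-p)}s^{2/(1-p)}$, is the profile that starts from $0$ \emph{at $s=0$} and reaches $1$ at $s=d(p,c)$; it only solves the boundary-value problem when $s_0=d(p,c)$. For general $s_0>d(p,c)$ the profile is translated, $X\equiv 0$ on $[0,\,s_0-d(p,c)]$ followed by $X(s)=C\,(s-(s_0-d))^{2/(1-p)}$, pinned by $X(s_0)=1$, which forces the positive part to occupy an interval of length exactly $d(p,c)$. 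Hence the dead core of $X$ is $[0,\,s_0-d(p,c)]$, \emph{not} $[0,\,d(p,c)]$. Carrying this through, $\overline u(x)=0$ iff $s(x)\le s_0-d$, i.e.\ iff $2(\psi_m-\psi(x))\le(s_0-d)^2$, i.e.\ iff
\begin{equation*}
\psi(x)\ \ge\ \psi_m-\tfrac12(s_0-d)^2\ =\ d\sqrt{2\psi_m}-\tfrac12 d^2\ =\ d(p,c)\bigl[\sqrt{2\psi_m}-\tfrac12 d(p,c)\bigr],
\end{equation*}
which is precisely the set in the corollary (vacuous when $s_0<d$). Your version produced the threshold $\psi_m-\tfrac12 d^2$, and the ``fix'' in your final paragraph --- claiming that $\psi(x)\ge d[\sqrt{2\psi_m}-\tfrac12 d]$ implies $s(x)\le d$ --- is false: that inequality is equivalent to $s(x)\le s_0-d$, and $\{s\le s_0-d\}\subset\{s\le d\}$ fails whenever $s_0>2d$. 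Once you replace ``$X=0$ for $s\le d$'' by ``$X=0$ for $s\le s_0-d$'' the computation closes cleanly and the rest of your argument (the first integral, the $C^1$ gluing $X'(s_0-d)=0$ since $(X')^2\propto X^{p+1}$, the non-uniqueness at zero, and the comparison $u\le\overline u$ with equal boundary data, as asserted in the proof of Lemma \ref{LemDC1}) is sound.
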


We will now utilize the above for the proof of the following Lemmas. 
$ \\ $

\begin{lemma}\label{LemDC1} Let $ \Omega = B_R (x_0) \subset {\mathbb{R}}^n $ and $ v \in C^2 ( \Omega ; {\mathbb{R}}_+ ) $ satisfy the following assumptions: 

\begin{align}\label{DCeq1}
\begin{gathered}
\Delta v(x) \geq c^2 v^{\frac{ \alpha}{2}} (x) \;\: ,\; x \in \Omega \\
v(x) \leq \delta \;\; , \; x \in \partial \Omega 
\end{gathered}
\end{align}
$ \alpha \in (0,2) \Leftrightarrow \frac{ \alpha}{2} = p \in (0,1) . \\ $ Then if $ y_0 \in \Omega $ is such that $ dist(y_0, \partial \Omega ) > R_0 \Rightarrow v(y_0) =0 . \\ \\ $
where $ R_0 := \begin{cases} \sqrt{n} d(p, \hat{c}) \;\:, R \geq \sqrt{n} d(p,\hat{c}) \\ 2R - \sqrt{n} d(p,\hat{c}) \;\: , \frac{1}{2} \sqrt{n} d(p,\hat{c}) < R < \sqrt{n} d(p,\hat{c}) \end{cases} . \\ \\ $ and $ d(p, \hat{c}) := \dfrac{\sqrt{2(p+1)}}{(1-p)\hat{c}} \;,\;\: \hat{c} = \dfrac{c}{{\delta}^{\frac{1-p}{2}}}. \\ $
\end{lemma}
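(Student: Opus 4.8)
The plan is to reduce the statement to a straightforward application of the machinery of \cite{Sperb} quoted above, namely Theorem \ref{ThSperb} and Corollary \ref{CorolSperb}, applied on the ball $\Omega = B_R(x_0)$. First I would rescale: set $w(x) := v(x)/\delta$, so that $w \le 1$ on $\partial\Omega$ and, using $\Delta v \ge c^2 v^p$ with $p = \alpha/2$, one gets $\Delta w \ge \hat{c}^2 w^p$ with $\hat{c} = c/\delta^{(1-p)/2}$ (the exponent comes from $c^2 \delta^p / \delta = c^2 \delta^{p-1} = \hat{c}^2$). Thus $w$ is a subsolution of the Sperb problem \eqref{eq:Sperb1} with constant $\hat{c}$, and it suffices to locate the dead core of the corresponding solution $u$ of \eqref{eq:Sperb1} on the ball: by the comparison principle $0 \le w \le u$, so wherever $u$ vanishes, so does $w$, hence $v$.

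Next I would compute the explicit ingredients of Corollary \ref{CorolSperb} for the ball. For $\Omega = B_R(x_0)$ the torsion function solving \eqref{eq:Sperb3} is radial and given explicitly by $\psi(x) = \frac{1}{2n}(R^2 - |x-x_0|^2)$, so $\psi_m = \frac{R^2}{2n}$, attained at the center, and $\sqrt{2\psi_m} = R/\sqrt{n}$. The mean curvature of $\partial B_R$ is positive, so Theorem \ref{ThSperb} applies and Corollary \ref{CorolSperb} gives that the dead core contains the set $\{\psi(x) \ge d(p,\hat{c})[\sqrt{2\psi_m} - \tfrac12 d(p,\hat{c})]\}$. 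Writing $t = |x - x_0|$ and substituting $\psi(x) = \frac{R^2 - t^2}{2n}$, this set is $\{t^2 \le R^2 - 2n\,d(p,\hat{c})[R/\sqrt{n} - \tfrac12 d(p,\hat{c})]\} = \{t^2 \le R^2 - 2\sqrt{n}\,d(p,\hat{c}) R + n\, d(p,\hat{c})^2\} = \{t \le |R - \sqrt{n}\, d(p,\hat{c})|\}$. So the dead core contains the ball of radius $|R - \sqrt{n}\, d(p,\hat{c})|$ about $x_0$; equivalently, $v(y_0) = 0$ whenever $\mathrm{dist}(y_0, \partial B_R(x_0)) = R - |y_0 - x_0| > R - |R - \sqrt{n}\, d(p,\hat{c})|$. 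When $R \ge \sqrt{n}\, d(p,\hat{c})$ this threshold is $\sqrt{n}\, d(p,\hat{c})$, and when $\tfrac12\sqrt{n}\, d(p,\hat{c}) < R < \sqrt{n}\, d(p,\hat{c})$ it is $2R - \sqrt{n}\, d(p,\hat{c})$ — exactly the two cases in the stated $R_0$.

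There are two points requiring a little care, and one of them is the main obstacle. The minor point is the passage from Sperb's setup, stated for classical solutions of $\Delta u = \hat{c}^2 u^p$ with $u = 1$ on $\partial\Omega$, to our subsolution $w$: I would either invoke the comparison principle directly against the Sperb supersolution $\overline{u}(x) = X(s(x))$ from \eqref{eq:Sperb4}–\eqref{eq:Sperb6} (which already dominates $w$ since $\Delta w \ge \hat{c}^2 w^p \ge \Delta\overline{u}$ and $w \le 1 = \overline{u}$ on $\partial\Omega$, and $\overline{u}$ vanishes on the dead core where $s(x) = 0$), thereby bypassing the need to solve \eqref{eq:Sperb1} exactly. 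This is in fact the cleanest route: $\overline{u}(x) = X(s(x))$ with $s(x) = \sqrt{2(\psi_m - \psi(x))}$, and $X$ vanishes on an interval $[0, s_0 - \tilde s]$ by the non-uniqueness of the trivial solution of the ODE $X'' = \hat{c}^2 X^p$, precisely the interval translating into Corollary \ref{CorolSperb}. The main obstacle is matching the regularity hypothesis: Sperb's results presuppose enough smoothness of $v$ and of the comparison functions, and near the dead core boundary $\overline{u} = X(s(x))$ is only $C^{1,\frac{2}{2-\alpha}}$; I would note that $v \in C^2(\Omega)$ is assumed in the hypotheses of the Lemma, and that the comparison argument only needs $\overline{u} \in C^1$ together with $\overline{u}$ being a weak supersolution, which holds. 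Once comparison is in place the conclusion is immediate from the two-case computation of the threshold above.
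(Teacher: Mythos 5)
Your proposal is correct and follows essentially the same approach as the paper's proof: rescaling $v$ by $\delta$ to normalize the boundary value to $1$, comparing with Sperb's explicit supersolution $\overline{u}=X(s(x))$ built from the torsion function of the ball, and then translating the sublevel-set description of the dead core in Corollary \ref{CorolSperb} into the distance condition $\mathrm{dist}(y_0,\partial B_R)>R_0$ by the same algebraic computation. The paper likewise invokes the comparison principle against the supersolution rather than solving \eqref{eq:Sperb1} exactly, so even the "cleanest route" you identify is the one actually taken.
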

\begin{proof}
From the maximum principle we have that $ v(x) \leq \delta $ in $ \Omega \\ $
Define $ \hat{v}:= \dfrac{v}{ \delta} $ and $ \hat{c} := \dfrac{c}{{ \delta}^{\frac{1-p}{2}}} $, then we have:
\begin{align*}
\begin{gathered}
\begin{cases}
\Delta \hat{v} (x) \geq {\hat{c}}^2 \hat{v}^{\frac{ \alpha}{2}} (x) \;\: ,\; x \in \Omega . \\
\hat{v} (x) \leq 1 \;\; , \; x \in \partial \Omega
\end{cases}
\end{gathered}
\end{align*}
For $ \Omega = B_R (x_0) $ we have that 
\begin{equation}\label{DCeq2}
\psi (x) = \frac{R^2}{2n} - \frac{1}{2n} |x - x_0|^2 \;\:,\; \psi_m = \frac{R^2}{2n}
\end{equation}
is a solution to the problem:
\begin{align}\label{DCeq3}
\begin{gathered}
\begin{cases} \Delta \psi (x) +1 =0 , \; x \in \Omega \\
\psi (x) = 0 \;\;, \; x\in \partial \Omega
\end{cases}
\end{gathered}
\end{align}
Also, we have that if:
\begin{equation}\label{DCeq4}
\begin{gathered}
\begin{cases} \Delta u \leq c^2 u^p , \; x \in \Omega \\
\Delta v \geq c^2 v^p , \; x \in \Omega 
\\ v \leq u  \; , \; x \in \partial \Omega
\end{cases}
\end{gathered}
\end{equation}
then $ v \leq u $ , in $ \Omega $. So since $ u,v \geq 0 $, if $ u(x_1)=0 \Rightarrow v(x_1) =0 . \\ $ Such $ u $ is defined in \cite{Sperb} via $ \psi $ in Theorem \ref{ThSperb} (supersolution with $ u=1 \geq \hat{v} $ on  the boundary). Then by Corollary \ref{CorolSperb} in \cite{Sperb}, the dead core of $ \overline{u} $ contains the set  $  \lbrace x \in \Omega | \psi(x) \geq C_0 := d(p,\hat{c})[ \frac{R}{\sqrt{n}} - \frac{1}{2} d(p,\hat{c}) ]  \rbrace $, that is if $ \\ y_0 \in \lbrace \psi(x) \geq C_0 \rbrace \Rightarrow \overline{u}(y_0) =0 $ and thus $ \hat{v} (y_0)  = v(y_0) =0 . \\ $
Since $ \psi $ has the form \eqref{DCeq2} we can see that 
\begin{align*}
\lbrace x \in \Omega | \psi(x) \geq C_0 \rbrace = \lbrace dist(x, \partial \Omega) \geq R_0 \rbrace
\end{align*}
as follows:
\begin{align*}
\begin{gathered}
\psi(x) \geq C_0 \Leftrightarrow \frac{R^2}{2n} - \frac{1}{2n} |x - x_0|^2 \geq C_0 \Leftrightarrow \sqrt{R^2 - 2n C_0} \geq |x - x_0| \\ \Leftrightarrow R - |x - x_0| \geq R - \sqrt{R^2 - 2n C_0}= R - \sqrt{R^2 - 2 \sqrt{n} d(p,\hat{c}) R +n (d(p,\hat{c}))^2  } = \\ = R - | R - \sqrt{n} d(p,\hat{c}) | = R_0
\end{gathered}
\end{align*}
and notice that: $ dist(x, \partial \Omega) = dist(x, \partial B_R (x_0)) = R - dist(x,x_0) $

\end{proof}
$ \\ $
\textit{Notes:} (1) $ \hat{c} $ depends on $ \delta $ and tends to infinity as $ \delta $ tends to zero. $ \\ $
(2) $ d(p, \hat{c}) $ tends to zero as $ \delta $ tends to zero, and so does $ C_0 . \\ $

\begin{remark}\label{RmkDC} If we take $ \tilde{ \Omega} $ open set, such that $ B_R(x_0) \subset \tilde{ \Omega} $ and 
\begin{align*}
\begin{gathered}
\begin{cases} \Delta \tilde{ \psi} (x) +1 =0 , \; x \in \tilde{ \Omega} \\
\tilde{ \psi} (x) = 0 \;\;, \; x\in \partial \tilde{ \Omega}
\end{cases}
\end{gathered}
\end{align*}
then, we have: $ \psi \leq \tilde{ \psi } \Rightarrow \lbrace \psi (x) \geq C_0 \rbrace \subset \lbrace \tilde{ \psi } (x) \geq C_0 \rbrace \Rightarrow \lbrace x \in B_R(x_0) : dist( \partial B_R (x_0),x) \geq R_0 \rbrace \subset \lbrace \tilde{ \psi } (x) \geq C_0 \rbrace . \\ $ Thus, the above theorem holds for more general open sets that contain a ball $ B_R(x_0) $.
\end{remark}
$ \\ $

\begin{lemma}\label{LemDC2} Let $ D $ open, convex $ \subset {\mathbb{R}}^n $ and for some $ d_0 >0 , \\ \Omega := \lbrace x \in D : dist(x, \partial D ) \geq d_0 \rbrace $ and let $ v \in C^2 ( D ; {\mathbb{R}}_+ ) $ satisfying:
\begin{align}\label{DCeq5}
\begin{gathered}
\Delta v(x) \geq c^2 v^{\frac{ \alpha}{2}} (x) \;\: ,\; x \in \Omega \\
v(x) \leq \delta \;\; , \; x \in \Omega 
\end{gathered}
\end{align}
$ \alpha \in (0,2) \Leftrightarrow \frac{ \alpha}{2} = p \in (0,1) . \\ $
Then if $ x_0 \in D $ such that $ dist(x_0, \partial D) \geq d_0 +  2 \frac{ \sqrt{2n(p+1)} }{(1-p)\hat{c}} \Rightarrow v(x_0) =0. \\ $
\end{lemma}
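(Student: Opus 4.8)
The idea is to reduce the dead-core statement on the convex neighborhood $\Omega=\{x\in D:\operatorname{dist}(x,\partial D)\ge d_0\}$ to the already-established ball version, Lemma \ref{LemDC1} (together with Remark \ref{RmkDC}). Given a point $x_0\in D$ with $\operatorname{dist}(x_0,\partial D)\ge d_0+2\frac{\sqrt{2n(p+1)}}{(1-p)\hat c}$, the plan is to find a ball $B_R(x_0)$ sitting inside $\Omega$ on whose closure $v$ already satisfies the hypotheses $\Delta v\ge c^2 v^{\alpha/2}$ and $v\le\delta$, and whose radius $R$ is large enough that the dead-core region produced by Lemma \ref{LemDC1} actually reaches the center $x_0$. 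Since, by Lemma \ref{LemDC1}, the dead core of $B_R(x_0)$ contains $\{\operatorname{dist}(x,\partial B_R(x_0))\ge R_0\}$ with $R_0=\sqrt n\,d(p,\hat c)$ in the regime $R\ge \sqrt n\,d(p,\hat c)$, the center $x_0$ lies in the dead core precisely when $R\ge R_0=\sqrt n\,d(p,\hat c)=\frac{\sqrt{2n(p+1)}}{(1-p)\hat c}$. Choosing $R:=\frac{\sqrt{2n(p+1)}}{(1-p)\hat c}$ (or anything slightly larger) then forces $v(x_0)=0$, provided $B_R(x_0)\subset\Omega$.

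So the remaining point is the geometric one: for such $x_0$, the ball $B_R(x_0)$ with $R=\frac{\sqrt{2n(p+1)}}{(1-p)\hat c}$ is contained in $\Omega$. This is where convexity of $D$ enters: if $\operatorname{dist}(x_0,\partial D)\ge d_0+R$ then every point $x$ with $|x-x_0|\le R$ satisfies $\operatorname{dist}(x,\partial D)\ge \operatorname{dist}(x_0,\partial D)-|x-x_0|\ge d_0$, hence $B_R(x_0)\subset\Omega$; and the hypothesis on $x_0$ is exactly $\operatorname{dist}(x_0,\partial D)\ge d_0+2\frac{\sqrt{2n(p+1)}}{(1-p)\hat c} = d_0+2R > d_0+R$. (The factor $2$ instead of $1$ is a harmless slack, and also guarantees one is safely in the regime $R\ge\sqrt n\,d(p,\hat c)$ of Lemma \ref{LemDC1} rather than the transitional one.) On this ball the two structural hypotheses of Lemma \ref{LemDC1} hold: the differential inequality holds on all of $\Omega\supset B_R(x_0)$ by \eqref{DCeq5}, and $v\le\delta$ holds on $\Omega$, in particular on $\partial B_R(x_0)$, which is all Lemma \ref{LemDC1} requires on the boundary (the interior bound $v\le\delta$ in $B_R(x_0)$ then follows from the maximum principle, as in the proof of Lemma \ref{LemDC1}).

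Then apply Lemma \ref{LemDC1} with $\Omega$ replaced by $B_R(x_0)$: since $\operatorname{dist}(x_0,\partial B_R(x_0))=R\ge R_0=\sqrt n\,d(p,\hat c)$, we conclude $v(x_0)=0$, which is the assertion. I do not expect a genuine obstacle here; the only thing to be careful about is bookkeeping with the two regimes in the definition of $R_0$ in Lemma \ref{LemDC1} and making sure the chosen $R$ lands in the first (clean) regime — the factor $2$ in the hypothesis is precisely the cushion that makes this automatic. One could also phrase the conclusion more generally, exhibiting the whole dead-core region $\{x\in D:\operatorname{dist}(x,\partial D)\ge d_0+2\frac{\sqrt{2n(p+1)}}{(1-p)\hat c}\}$ as contained in $\{v=0\}$ by running the same argument at every such center, but the pointwise statement as given follows immediately from the above.
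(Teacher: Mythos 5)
Your proposal is correct and follows essentially the same route as the paper: reduce to the ball lemma (Lemma \ref{LemDC1}) by finding a ball $B_R(x_0)\subset\Omega$ of radius comparable to $R_0=\sqrt{n}\,d(p,\hat c)$, then observe that $x_0$ lies in the dead core of that ball. The paper takes $R=2\sqrt{n}\,d(p,\hat c)=2R_0$ outright (which keeps the strict inequality $\operatorname{dist}(x_0,\partial B_R(x_0))=R>R_0$ of Lemma \ref{LemDC1} satisfied with room to spare), whereas you start from the borderline choice $R=R_0$ and correctly note that one should take it ``slightly larger''; either bookkeeping works, and both exploit the same slack coming from the factor $2$ in the hypothesis. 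You also replace the paper's appeal to convexity of the parallel set $\Omega$ by the $1$-Lipschitz property of $\operatorname{dist}(\cdot,\partial D)$, which is an equivalent (and arguably cleaner) way to see $B_R(x_0)\subset\Omega$.
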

\begin{proof}
We have that:
\begin{align*}
\lbrace x \in D: dist(x, \partial D ) \geq d_0 + 2 \frac{ \sqrt{2n(p+1)} }{(1-p)\hat{c}} \rbrace = \lbrace x \in \Omega : dist(x, \partial \Omega ) \geq 2 \frac{ \sqrt{2n(p+1)} }{(1-p)\hat{c}} \rbrace
\end{align*}
and $ \Omega $ is convex (parallel sets have at the same side of supporting planes). $ \\ $
Let $ x_0 \in D $ such that $ dist(x_0, \partial D) \geq d_0 + 2 \frac{ \sqrt{2n(p+1)} }{(1-p)\hat{c}} $. Since $ dist( \partial D, \partial \Omega ) = d_0 \Rightarrow dist(x_0, \partial \Omega) \geq 2 \frac{ \sqrt{2n(p+1)} }{(1-p)\hat{c}} $ and since $ \Omega $ is convex there exist a ball $ B_R (x_0) \subset \Omega $ for $ R= 2 \frac{ \sqrt{2n(p+1)} }{(1-p)\hat{c}} = 2 \sqrt{n}d(p,\hat{c}) > R_0 = \sqrt{n}d(p,\hat{c}) \;\:, d(p,\hat{c}) $ as defined above. $ \\ $
Therefore we can apply Lemma \ref{LemDC1} in the ball $ B_R (x_0) $ and we have that $ v(x)=0 \:, \forall x \in B_{R_0}(x_0) = \lbrace x \in B_R(x_0) : dist( \partial B_R (x_0),x) \geq R_0 \rbrace \Rightarrow v(x_0)=0 . $

\end{proof}

The results of Lemma \ref{LemDC1} and Lemma \ref{LemDC2} above were proved for the case $ 1< \alpha <2 $, since $ u \in C^{2,\alpha -1} $ by elliptic regularity. However, they also hold for the case where $ 0< \alpha \leq 1 $. The only difference in proving this, is that the differential inequality \eqref{DifIn2} holds weakly and we utilize it together with the weak maximum principle for the comparison argument as in the proof of lemma \ref{LemDC1}. So in order to extend the results of the lemmas above for the case where $ 0< \alpha \leq 1 $, it suffices to prove the following claim. $ \\ $

\begin{lemma}\label{LemWeakDifIn}
\begin{align*}
\Delta v \geq c^2 v^{\frac{\alpha}{2}} \;\;\; \textrm{weakly in} \;\: W^{1,2}(B_R (x_0)).
\end{align*}
\end{lemma}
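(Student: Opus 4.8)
The plan is to obtain the differential inequality \eqref{DifIn2} in the weak sense directly from the definition of minimizer (Definition \ref{DefMin}), by testing the minimality of $u$ against a one–sided \emph{radial} variation that contracts $u$ towards the minimum $a$. Throughout one works on a ball $B_R(x_0)$ on which $|u-a|\le\rho_0$ — the only regime in which \eqref{DifIn2} and Lemmas \ref{LemDC1}–\ref{LemDC2} are invoked — and recalls that by Lemma \ref{LemHC} the map $u$ is continuous, so that $v=|u-a|^2\in W^{1,2}_{loc}(B_R(x_0))\cap L^\infty$ while $v^{\alpha/2}=|u-a|^\alpha$ is continuous and bounded; it therefore suffices to show $-\int\nabla v\cdot\nabla\varphi\,dx\ge c^2\int v^{\alpha/2}\varphi\,dx$ for every $0\le\varphi\in C_c^\infty(B_R(x_0))$.

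Fix such a $\varphi$ and, for $0<t<\|\varphi\|_{L^\infty}^{-1}$, set $u_t:=u-t\varphi\,(u-a)$, so that $u_t-a=(1-t\varphi)(u-a)$ with $0<1-t\varphi\le1$. Then $|u_t-a|\le|u-a|\le\rho_0$ and $u_t-u=-t\varphi(u-a)\in W^{1,2}_0(B_R(x_0))\cap L^\infty$, so $u_t$ is an admissible competitor and $J_{B_R(x_0)}(u_t)\ge J_{B_R(x_0)}(u)$. First I would expand the Dirichlet term: using $\sum_k(u_k-a_k)\nabla u_k=\tfrac12\nabla v$ one gets, pointwise,
\[
\tfrac12|\nabla u_t|^2-\tfrac12|\nabla u|^2=-t\Big(\varphi\,|\nabla u|^2+\tfrac12\,\nabla v\cdot\nabla\varphi\Big)+t^2e(x),
\]
with $e\in L^1(B_R(x_0))$ and $\|e\|_{L^1}$ bounded in terms of $\|\varphi\|_{C^1}$, $\|u\|_{L^\infty}$ and $\int_{B_R(x_0)}|\nabla u|^2$ (here one uses $|\nabla v|\le2|u-a|\,|\nabla u|\le C|\nabla u|$), so in particular independent of $t$.

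Next I would estimate the potential: on $\{u\ne a\}$, writing $\xi=(u-a)/|u-a|$, hypothesis \textbf{(H1)} says that $\rho\mapsto W(a+\rho\xi)-C^*\rho^\alpha$ has nonnegative derivative on $(0,\rho_0]$ and is therefore nondecreasing there, which yields
\[
W(u)-W(u_t)\ \ge\ C^*|u-a|^\alpha\big(1-(1-t\varphi)^\alpha\big)\ \ge\ \alpha C^*\,t\,\varphi\,v^{\alpha/2},
\]
the last inequality because $s\mapsto1-(1-s)^\alpha$ is convex on $[0,1)$ for $\alpha\in(0,1]$ and hence lies above its tangent line $\alpha s$ at $0$; on $\{u=a\}$ this is an equality (and for $\alpha\in(1,2)$ the conclusion is anyway immediate from $u\in C^{2,\alpha-1}$ and \eqref{DifIn1}). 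Inserting both bounds into $0\le J_{B_R(x_0)}(u_t)-J_{B_R(x_0)}(u)$ gives
\[
0\le -t\Big(\int\varphi|\nabla u|^2+\tfrac12\int\nabla v\cdot\nabla\varphi\Big)+t^2\int e-\alpha C^*t\int\varphi\,v^{\alpha/2};
\]
dividing by $t>0$ and letting $t\to0^+$ leaves $-\int\varphi|\nabla u|^2-\tfrac12\int\nabla v\cdot\nabla\varphi\ge\alpha C^*\int\varphi\,v^{\alpha/2}$, and discarding the nonnegative term $\int\varphi|\nabla u|^2$ and multiplying by $2$ gives exactly $-\int\nabla v\cdot\nabla\varphi\ge2\alpha C^*\int v^{\alpha/2}\varphi=c^2\int v^{\alpha/2}\varphi$ (recall $c^2=2\alpha C^*$). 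A routine density argument then upgrades the test class to nonnegative $\varphi\in W^{1,2}_0(B_R(x_0))$.

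The main obstacle is not a single deep point but the bookkeeping in the middle two steps: checking that $u_t$ is genuinely admissible in the sense of Definition \ref{DefMin} and that the $O(t^2)$ remainder in the Dirichlet expansion is integrable with a bound independent of $t$, and — most importantly — extracting the monotonicity of $\rho\mapsto W(a+\rho\xi)-C^*\rho^\alpha$ from the one–sided derivative bound in \textbf{(H1)} along segments contained in $(0,\rho_0]$, so that the potential difference carries the right sign under a contraction toward $a$. Everything else is the rigorous counterpart of the formal identity \eqref{DifIn1}. An alternative, if one prefers to avoid the explicit variation, is to approximate $W$ by $C^2$ potentials $W_\varepsilon$ vanishing at the same minima and satisfying a uniform version of \textbf{(H1)}, whose minimizers $u_\varepsilon$ are classical solutions obeying \eqref{DifIn1} pointwise, and then pass to the limit; this route, however, forces one to establish $W^{1,2}_{loc}$–convergence $u_\varepsilon\to u$, which is considerably more delicate than the variational argument above.
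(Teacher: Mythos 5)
Your argument is correct and arrives at the same inequality, but it is genuinely different from the proof in the paper. The paper's route is to observe that $u$ is a classical $C^2$ solution on the open set $\{u\neq a\}$ (where $W$ is differentiable), so that \eqref{DifIn2} holds pointwise there; it then sets $v_\varepsilon:=\max\{v,\varepsilon\}$, invokes Sard's theorem for the smoothness of a generic level set $\{v=\varepsilon\}$, integrates by parts on $\{v>\varepsilon\}\cap B_R$ against a nonnegative $\phi\in C_0^1$, checks that the boundary term $-\int_{\{v=\varepsilon\}}\frac{\partial v}{\partial\nu}\phi$ has the favourable sign (the outward normal points toward decreasing $v$), and passes $\varepsilon\to 0$. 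Your proof instead tests minimality directly with the interior contraction $u_t=u-t\varphi(u-a)$ and reads off the weak inequality from the sign of the first variation, combining the expansion of the Dirichlet term in the direction $-\varphi(u-a)$ with the radial monotonicity of $\rho\mapsto W(a+\rho\xi)-C^*\rho^\alpha$ from \textbf{(H1)} and the convexity of $s\mapsto 1-(1-s)^\alpha$ for $\alpha\in(0,1]$. This is arguably cleaner: it bypasses interior elliptic regularity, Sard's theorem, and the level-set decomposition altogether, and uses only $u\in W^{1,2}_{loc}\cap L^\infty$ plus continuity; it also yields the extra nonnegative term $\int\varphi|\nabla u|^2$ for free, mirroring the full strength of \eqref{DifIn1}. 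What it sacrifices is that the potential inequality $1-(1-s)^\alpha\ge\alpha s$ reverses for $\alpha\in(1,2)$, so the argument as written is restricted to $\alpha\in(0,1]$; but this is precisely the range for which the lemma is needed (the paper itself notes that for $\alpha\in(1,2)$ the inequality holds classically by $C^{2,\alpha-1}$ regularity), so nothing is lost. One small bookkeeping item worth stating explicitly if you write this up: you need $|u_t-a|\le|u-a|$ to keep $u_t$ in the range where \textbf{(H1)} applies, which holds because $0\le 1-t\varphi\le 1$, and the $O(t^2)$ error term is independent of $t$ (its coefficient is fixed once $\varphi$ is fixed), so passing $t\to 0^+$ after dividing by $t$ is immediate.
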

$ \\ $ 
\begin{proof} $ \\ $

Let $ v \in W^{1,2}(B_R(x_0)) \;\:,\; v $ continuous ($ v = | u-a |^2 $, by Lemma \ref{LemHC}) and $ v \geq 0 . \\ $ We define $ v_{\varepsilon} := \max\lbrace v, \varepsilon \rbrace \;\;,\; 0< \varepsilon < \delta $ (where $ \delta $ as in the above Lemmas). The set $ \lbrace v = \varepsilon \rbrace $ is smooth by Sard's theorem, since $ v $ is smooth away from zero. 

Let $ \phi \in C_0^1(B_R(x_0)) \;\:, \; B_R^{\varepsilon} (x_0) = \lbrace v > \varepsilon \rbrace \cap B_R (x_0) $, we have

\begin{align*}
\begin{gathered}
- \int_{B_R(x_0)} \nabla v \nabla \phi dx = \lim_{\varepsilon \rightarrow 0} \int_{B_R^{\varepsilon}(x_0)} - \nabla v_{\varepsilon} \nabla \phi dx = \liminf_{\varepsilon \rightarrow 0} [ - \int_{B_R^{\varepsilon}(x_0)} \nabla v \nabla \phi dx ] 
\\ \geq \liminf_{\varepsilon \rightarrow 0} [ \int_{B_R^{\varepsilon}(x_0)} \Delta v \phi dx - \int_{ \partial B_R^{\varepsilon}(x_0)} \frac{\partial v}{\partial \nu} \phi dS ] \geq \liminf_{\varepsilon \rightarrow 0} [ \int_{B_R^{\varepsilon}(x_0)} \Delta v \phi dx ] \\ \geq \liminf_{\varepsilon \rightarrow 0} [ \int_{B_R^{\varepsilon}(x_0)} c^2 v^{\frac{\alpha}{2}} \phi dx ] = \lim_{\varepsilon \rightarrow 0} [ \int_{B_R^{\varepsilon}(x_0)} c^2 v^{\frac{\alpha}{2}} \phi dx  ] = \\ \geq \lim_{\varepsilon \rightarrow 0} [ \int_{B_R(x_0)} c^2 v_{\varepsilon}^{\frac{\alpha}{2}} \phi dx - c^2 {\varepsilon}^{\frac{\alpha}{2}} \int_{B_R \setminus B_R^{\varepsilon}} \phi dx ] = \int_{B_R(x_0)} c^2 v^{\frac{\alpha}{2}} \phi dx.
\end{gathered}
\end{align*}

\end{proof}

$ \\ $

\subsection{On the definition of $W^0$}
$ \\ $

In what follows we establish essentially that $ \lim_{\alpha \rightarrow 0} J^{\alpha} = J^0 $ in the $ \Gamma -$ convergence sense. The containment result in Appendix \ref{sec:appendixA} is essential here. 

\begin{align}\label{Ja}
J^\alpha(\Omega,u) = \int_\Omega ( \dfrac{1}{2} |\nabla u|^2 + W^\alpha(u))dx
\end{align}
with 

\begin{align}\label{Wa}
W^\alpha(u):=\displaystyle\prod_{i=1}^N| u-a_i|^\alpha \;\;,\: i \in \lbrace 1,...,N \rbrace \; 0< \alpha < 2. 
\end{align}

We further denote:

\begin{align}\label{W0}
W_0(u):=\chi_{\{u\in S_A\}}
\end{align}
where
$$
A:=\{a_1,\dots,a_N\}
$$ and 

\begin{align}\label{SA}
S_A:=\left\{\sum_{i=1}^N \lambda_i a_i,\textrm{ where }\sum_{i=1}^N \lambda_i=1, \lambda_i\in [0,1),i\in\{1,\dots,N\}\right\}
\end{align}
(i.e. $S_A$ is the convex hull of the points in $A$ except the point themselves). Then 

\begin{align*}
\bar S_A=S_A\cup A
\end{align*}

We have the following:

$ \\ $

\begin{lemma}\label{LemJaLim} Let $(u^{\alpha_k})_{k\in\N}$ be a sequence of functions such that $\alpha_k\to 0$ as $k\to\infty$ and  for any $k\in \mathbb{N} $ the function $u^{\alpha_k}:\R^n\to\R^m$ is an energy minimizer of $J^{\alpha_k}$ as defined in \eqref{Ja}.

We assume that

\begin{align}\label{Lem26eq1}
u^{\alpha_k}(x)\in \bar S_A, \forall x\in \R^n, k\in\N
\end{align}

Then there exists a subsequence relabelled for simplicity as the initial sequence such that:

\begin{align}\label{Lem26eq2}
u^{\alpha_k} \rightharpoonup \tu,\,\textrm{ in } W^{1,2}(\R^n;\R^m), \textrm{ as }k\to\infty
\end{align}
with $\tu$  a local energy minimizer of the functional $J^0$  defined as:
\begin{align}\label{Lem26eq3}
J^0(\Omega, u):=\int_\Omega \dfrac{1}{2} |\nabla u|^2+W^0(u(x))\,dx
\end{align}
(with $W^0$ from \eqref{W0}).
\end{lemma}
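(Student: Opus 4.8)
The plan is to establish the $\Gamma$-convergence-type statement in two stages: first extracting a weak limit $\tu$ from the sequence of minimizers, then verifying that $\tu$ is a local minimizer of $J^0$ via the standard two inequalities (liminf inequality / lower bound, and the construction of a recovery sequence / upper bound). First I would observe that since each $u^{\alpha_k}$ takes values in the fixed compact set $\bar S_A$, the sequence is uniformly bounded in $L^\infty$, hence in $L^2_{loc}$; combining this with the Basic Estimate (Lemma \ref{LemBE}), which gives $J^{\alpha_k}_{B_r(x_0)}(u^{\alpha_k}) \le C_0 r^{n-1}$ with $C_0$ independent of $k$, we get a uniform $W^{1,2}(B_R)$ bound on every ball. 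A diagonal argument then yields a subsequence with $u^{\alpha_k} \rightharpoonup \tu$ weakly in $W^{1,2}_{loc}$ and (by Rellich) strongly in $L^2_{loc}$, with $\tu$ also valued in $\bar S_A$. Passing to a further subsequence we may assume $u^{\alpha_k}\to\tu$ a.e.

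Next I would prove the lower bound: for any bounded open $\Omega$ and any competitor, and in particular for $\tu$ itself, $\liminf_k J^{\alpha_k}(\Omega, u^{\alpha_k}) \ge J^0(\Omega, \tu)$. The gradient term is handled by weak lower semicontinuity of the $L^2$ norm. For the potential term the key point is the pointwise behaviour of $W^{\alpha_k}$: if $x$ is such that $\tu(x)\in A$ then along the subsequence $W^{\alpha_k}(u^{\alpha_k}(x))\to 0 = W^0(\tu(x))$ at such points (using $u^{\alpha_k}(x)\to\tu(x)$ and continuity), while if $\tu(x)\in S_A$ one notes $\prod_i|z-a_i|^{\alpha_k}\to 1$ as $\alpha_k\to 0$ uniformly for $z$ in compact subsets of $\bar S_A$ avoiding a neighbourhood of $A$; combined with Fatou's lemma and the fact that $\{u^{\alpha_k}\in S_A\}$ and $\{\tu\in S_A\}$ differ by a set related to where $\tu\in A$, one gets $\liminf_k \int_\Omega W^{\alpha_k}(u^{\alpha_k}) \ge \int_\Omega \chi_{\{\tu\in S_A\}} = \int_\Omega W^0(\tu)$. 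Here is where the Containment result of Appendix \ref{sec:appendixA} is essential: it guarantees that the $u^{\alpha_k}$ (and candidate competitors we build below) stay inside $\overline{co}(A)=\bar S_A$, so that all these pointwise limits are legitimate and we never have to control $W^{\alpha_k}$ on unbounded pieces of $\R^m$.

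For the upper bound (local minimality of $\tu$), fix a bounded Lipschitz $\Omega$ and a competitor $w$ agreeing with $\tu$ outside $\Omega$ with values in $\bar S_A$; I would build a recovery sequence $w_k$ agreeing with $u^{\alpha_k}$ on $\partial\Omega$ and with $J^{\alpha_k}(\Omega, w_k)\to J^0(\Omega,w)$, the usual device being to interpolate between $u^{\alpha_k}$ and $w$ in a thin boundary layer $\Omega\setminus\Omega_\varepsilon$ whose width shrinks slowly; the strong $L^2$ convergence of $u^{\alpha_k}$ to $\tu$ and the uniform gradient bound from the Basic Estimate control the error in the layer, while on $\Omega_\varepsilon$ one uses that $W^{\alpha_k}(w)\to W^0(w)$ boundedly (dominated convergence on the compact set $\bar S_A$). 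Minimality of $u^{\alpha_k}$ gives $J^{\alpha_k}(\Omega,u^{\alpha_k})\le J^{\alpha_k}(\Omega,w_k)$, and letting $k\to\infty$ and combining with the lower bound yields $J^0(\Omega,\tu)\le J^0(\Omega,w)$. I expect the main obstacle to be precisely the boundary-layer construction in the upper bound: reconciling the Dirichlet data of $u^{\alpha_k}$ (which only converge weakly, not strongly, in $W^{1,2}(\partial\Omega)$) with the target $w$ while keeping the interpolant inside $\bar S_A$ and the layer energy $o(1)$ — this requires choosing the layer width as a function of $k$ carefully, using the $W^{1,2}$-trace bound and the Hölder estimate (Lemma \ref{LemHC}) to quantify the oscillation of $u^{\alpha_k}$ near $\partial\Omega$.
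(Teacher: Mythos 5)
Your proposal follows the same overall $\Gamma$-convergence strategy as the paper (extract a weak limit via the Basic Estimate and Rellich--Kondrachov, then prove a liminf inequality for $J^0$ and a limsup-type inequality to transfer minimality), and the treatment of the potential term is essentially the paper's: split the domain according to whether $\tu(x)\in S_A$ or $\tu(x)\in A$, use $W^{\alpha_k}\to W^0$ pointwise on $\bar S_A$ together with dominated convergence and nonnegativity. Where you genuinely diverge is in the last step.

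The paper's proof of minimality is phrased as a chain of inequalities between $\inf J^0(\cdot,\Omega)$ and $\liminf/\limsup \inf J^{\alpha_k}(\cdot,\Omega)$, culminating in $\inf J^0(\cdot,\Omega)\geq\limsup_{k}\inf J^{\alpha_k}(\cdot,\Omega)$, derived by noting that for an arbitrary admissible $u$ one has $J^0(u,\Omega)=\lim_k J^{\alpha_k}(u,\Omega)\geq\limsup_k\inf J^{\alpha_k}(\cdot,\Omega)$. As written, this needs interpretation: if the infima are unconstrained they both equal zero (take a constant in $A$), and if they are constrained to functions with prescribed boundary trace (as Definition~\ref{DefMin} requires), then the admissible class for $J^{\alpha_k}$ is $u^{\alpha_k}+W^{1,2}_0(\Omega)$, which \emph{changes with $k$}; a fixed $u$ is then not an admissible competitor for all $k$, so $J^{\alpha_k}(u,\Omega)\geq\inf J^{\alpha_k}(\cdot,\Omega)$ is not immediate. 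This is precisely the difficulty you flag. Your proposed remedy --- a boundary-layer interpolation gluing the competitor $w$ to $u^{\alpha_k}$ near $\partial\Omega$, with the layer width tuned so that the strong $L^2_{loc}$ convergence, the uniform gradient bound from the Basic Estimate, and the uniform log-Lipschitz/H\"older estimate of Lemma~\ref{LemHC} make the layer energy $o(1)$ --- is the standard way to repair this and supplies exactly the missing step. In short: you identify and close a boundary-data gap that the paper's proof handles only implicitly; otherwise the two arguments coincide.

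One remark on your lower-bound paragraph: you should not need Fatou in the delicate form you describe. On $\{\tu\in S_A\}$, a.e.\ convergence $u^{\alpha_k}\to\tu$ implies $u^{\alpha_k}(x)$ is eventually bounded away from $A$, hence $W^{\alpha_k}(u^{\alpha_k}(x))\to 1$ pointwise a.e.; dominated convergence (the sequence is bounded on $\bar S_A$ by $(\mathrm{diam}\,\bar S_A)^{N\alpha_k}$, uniformly as $\alpha_k\to 0$) then gives the limit directly, and on the complementary set $\chi_{\{\tu\in S_A\}}=0\leq\liminf$. This is what the paper does and it is simpler than the argument you sketch.

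Finally, your invocation of the Containment result is superfluous here: the inclusion $u^{\alpha_k}(x)\in\bar S_A$ is assumed as \eqref{Lem26eq1}, so Containment is needed to \emph{verify} that hypothesis for the minimizers constructed elsewhere in the paper, not inside the proof of this lemma.
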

\begin{proof} We have 
\begin{align*}
( P) \begin{cases} W^{\alpha_k}(u) \rightarrow W^0(u) \;\;\;\;\textrm{ in }\bar S_A\textrm{ as } k\to\infty\\ 
W^{\alpha_k} \geq 0,\,\forall \alpha_k>0
\end{cases}
\end{align*}

 Arguing along the lines of Lemma \ref{LemJRn-1}, (while taking into account the properties $ ( P) $ and the definition \eqref{Wa} of $W^{\alpha_k}$s) we get:
\begin{align}\label{Lem26eq4}
J^{\alpha_k} (B_r,u^{\alpha_k} ) \leq C r^{n-1}
\end{align}
for all $r\ge 1$,
where  $ C $ depends only on the points $a_1,\dots,a_N$ through the assumed inclusion \eqref{Lem26eq1} (and is independent of $ \alpha_k,k\in\N$).

Out of this uniform bound we claim that there exists $ \tu \in W^{1,2} (\R^n ;\R^m) $ such that:
$ \\ $ (1) $ u^{\alpha_k} \rightharpoonup \tu $ in  $ W^{1,2} (\R^n ;\R^m) $ as $ k \rightarrow \infty$ on a subsequence
$ \\ $ (2) $ \tu $ is a local minimizer of $J^0$.

By the bound \eqref{Lem26eq4} , $ W^{\varepsilon} \geq 0 $ and by the Rellich- Kondrachov theorem, we can obtain, along a subsequence
\begin{align*}
u^{\alpha_k} \rightharpoonup \tu \;\;\;on \;\; W^{1,2} (\R^n; \R^m) 
\end{align*}
and 
\begin{align*}
u^{\alpha_k} \rightarrow \tu \;\;\;on \;\; L^p_{loc} (\R^n ; \R^m) 
\end{align*}
These provide claim (1).

In order to show claim (2) we  note first we have:

\begin{align}\label{Lem26eq5}
J^0(\tu,\Omega)\le \liminf_{\alpha_k\to 0} J^{\alpha_k}(u^{\alpha_k},\Omega)
\end{align}

Indeed, we have by lower semicontinuity 

\begin{align}\label{Lem26eq6}
\int_{\Omega}|\nabla \tu|^2\,dx\le \liminf_{k\to\infty} \int_{\Omega}|\nabla u^{\alpha_k}|^2\,dx
\end{align}

We have that $\tu\in \bar S_A$ and we denote $A_{\tu}:=\{x\in\R^n: \tu (x)\in S_A\}$. Taking into account the specific form \eqref{Wa} of the potential $W^\alpha$ we have, for $\alpha_k\to 0$ as $k\to\infty$:

\begin{align}\label{Lem26eq7}
\int_{A_{\tu}\cap \Omega}\chi_{\{\tu\in S_A\}}\,dx=\int_{A_{\tu}\cap\Omega}dx=\lim_{k\to\infty} \int_{A_{\tu}\cap\Omega} W_{\alpha_k}(u^{ {\alpha}_k} (x))\,dx
\end{align}

Furthermore, since $W^\alpha\ge 0$ we have:
\begin{align}\label{Lem26eq8}
\int_{\Omega\setminus A_{\tu}}\chi_{\{\tu\in S_A\}}\,dx=0\le \lim_{k\to\infty} \int_{\Omega\setminus A_{\tu}} W^{\alpha_k}(u^{ {\alpha}_k} (x))\,dx
\end{align}

The last three estimates provide the claimed relation \eqref{Lem26eq5}. One can then trivially see that:

\begin{align}\label{Lem26eq9}
\inf J^0(\cdot, \Omega) \le J^0(\tu,\Omega)\le \liminf_{\alpha_k\to 0} \inf J^{\alpha_k}(\cdot,\Omega)
\end{align}

We claim now that for an arbitrary $u\in W^{1,2}_{loc}(\R^n;\R^m)$ with $u(x)\in\bar S_A$ for almost all $x\in \R^n$ we have:

\begin{align}\label{Lem26eq10}
\lim_{\alpha_k\to 0}J^{\alpha_k}(u,\Omega)=J^0(u,\Omega)
\end{align}

Indeed we have: 
\begin{align}\label{Lem26eq11}
\int_{A_u\cap \Omega}\chi_{\{u\in S_A\}}\,dx=\int_{A_u\cap\Omega}dx=\lim_{k\to\infty} \int_{A_u\cap\Omega} W^{\alpha_k}(u(x))\,dx
\end{align}
\begin{align}\label{Lem26eq12}
\int_{\Omega\setminus A_u}\chi_{\{u\in S_A\}}\,dx=0= \lim_{k\to\infty} \int_{\Omega\setminus A_u}W^{\alpha_k}(u(x))\,dx
\end{align}
so 

$$\int_{\Omega}|\nabla u|^2+\chi_{\{u\in S_A\}}\,dx=\lim_{k\to\infty} \int_{\Omega}|\nabla u|^2+W^{\alpha_k}(u(x))\,dx,$$ as claimed. 

\bigskip
We note now that \eqref{Lem26eq10} implies:

\begin{align*}
J^0(u,\Omega)=\lim_{\alpha_k\to 0}J^{\alpha_k}(u,\Omega)= \limsup_{\alpha_k\to 0}J^{\alpha_k}(u,\Omega)\ge  \limsup_{\alpha_k\to 0} \inf J^{\alpha_k}(\cdot,\Omega)
\end{align*}
and since this holds for $u$ arbitrary we get:

\begin{align}\label{Lem26eq13}
\inf J^0(\cdot,\Omega)\ge \limsup_{\alpha_k\to 0} \inf J^{\alpha_k}(\cdot,\Omega)
\end{align}

The last inequality, together with \eqref{Lem26eq9} provide the claimed local minimality of $\tu$.

\end{proof}
\textit{Note:} The above Lemma also holds for the class of local minimizers of the energy.

$ \\ \\ $

\section{Proofs}

\subsection{Proof of Proposition \ref{PointwiseEst}}
$ \\ $

\begin{proof}
(i) (cfr \cite{AFS} p.161). Let
\begin{equation}\label{PointwiseEsteq4}
|u(x) -a|<M \;,\;\: || u ||_{C^{\beta}} < \hat{C} =\hat{C}(M) \;,\: x \in \mathcal{O}
\end{equation}
where for the H\"older bound we utilized Lemma \ref{LemHC}. Given $ q \in (0,M) $, assume that
\begin{equation}\label{PointwiseEsteq5}
|u(x_0) -a | \geq q
\end{equation}
Then the H\"older continuity of $ u $ implies that the hypothesis of the Density Estimate \eqref{TheDensityEstimate} is satisfied for
\begin{equation}\label{PointwiseEsteq6}
\lambda = \frac{q}{2} \;\;,\; r_0 = (\frac{q/2}{\hat{C}})^{\frac{1}{\beta}} \;\;,\; \mu_0 = \mathcal{L}^n (B_{r_0}(x_0))
\end{equation}
Therefore
\begin{equation}\label{PointwiseEsteq7}
\mathcal{L}^n (B_r(x_0) \cap \lbrace |u-a| > \frac{q}{2} \rbrace) \geq C r^n \;\;,\; B_r (x_0) \subset \mathcal{O} \;,\: r \geq r_0
\end{equation}
Let
\begin{equation}\label{PointwiseEsteq8}
0< w_{\frac{q}{2}} := \min_{\Sigma} W(z) \;,\: \Sigma = \lbrace |z-a| > \frac{q}{2} \rbrace \cap \lbrace d(z,\lbrace W=0 \rbrace \setminus a) \geq k \rbrace
\end{equation}
From this and the Basic Estimate Lemma \ref{LemBE} we obtain
\begin{equation}\label{PointwiseEsteq9}
w_{\frac{q}{2}} C_1 r^n \leq J_{B_r(x_0)} (u) \leq C_0 r^{n-1}
\end{equation}
which is impossible for
\begin{equation}\label{PointwiseEsteq10}
r > \frac{C_0}{w_{\frac{q}{2}}C_1} 
\end{equation}
Therefore if we set
\begin{equation}
r_q= \frac{2C_0}{w_{\frac{q}{2}}C_1}
\end{equation}
then $ B_{r_q}(x_0) \subset \mathcal{O} $ is incompatible with \eqref{PointwiseEsteq5}.
$\\ $
The proof of (i) is complete. $ \\ $

(ii) Consider the ball $ B_R(x_0) \;,\: R $ to be selected. $ \\ $
Let $ \xi \in B_R(x_0) \\ $

\tikzset{every picture/.style={line width=0.75pt}} 

\begin{tikzpicture}[x=0.75pt,y=0.75pt,yscale=-1,xscale=1]

\draw   (224.8,132.6) .. controls (224.8,83.67) and (264.47,44) .. (313.4,44) .. controls (362.33,44) and (402,83.67) .. (402,132.6) .. controls (402,181.53) and (362.33,221.2) .. (313.4,221.2) .. controls (264.47,221.2) and (224.8,181.53) .. (224.8,132.6) -- cycle ;
\draw   (249.48,132.6) .. controls (249.48,97.3) and (278.1,68.68) .. (313.4,68.68) .. controls (348.7,68.68) and (377.32,97.3) .. (377.32,132.6) .. controls (377.32,167.9) and (348.7,196.52) .. (313.4,196.52) .. controls (278.1,196.52) and (249.48,167.9) .. (249.48,132.6) -- cycle ;
\draw [color={rgb, 255:red, 0; green, 0; blue, 0 }  ,draw opacity=1 ]   (377.8,70.2) -- (311.8,139.2) ;
\draw   (225.88,133.24) .. controls (225.88,120.57) and (236.15,110.29) .. (248.83,110.29) .. controls (261.51,110.29) and (271.79,120.57) .. (271.79,133.24) .. controls (271.79,145.92) and (261.51,156.2) .. (248.83,156.2) .. controls (236.15,156.2) and (225.88,145.92) .. (225.88,133.24) -- cycle ;
\draw    (269.8,125.2) -- (249.48,132.6) ;

\draw (313.8,142.6) node [anchor=north west][inner sep=0.75pt]    {$x_{0}$};
\draw (297,238.4) node [anchor=north west][inner sep=0.75pt]    {$B_{R}( x_{0})$};
\draw (253,128.4) node [anchor=north west][inner sep=0.75pt]    {$r_{q}$};

\end{tikzpicture}

\begin{equation}\label{PointwiseEsteq11}
d( \xi, \partial B_R(x_0)) = r_q \;\:,\; 0< 2q < \rho_0
\end{equation}
where $ r_q $ as in (i) above. Note that by \textbf{(H1)}
\begin{equation}\label{PointwiseEsteq12}
w_{\frac{q}{2}} \geq C^* (\frac{q}{2})^{\alpha} \;,\: r_q = \frac{2C_0}{w_{\frac{q}{2}} C_1} \leq \frac{2C_0}{C_1 C^*}(\frac{q}{2})^{-\alpha}
\end{equation}
and by (i) above
\begin{equation}\label{PointwiseEsteq13}
|u(\xi) - a|<q
\end{equation}
Therefore by \cite{AFS}, Theorem 4.1 originally derived in \cite{AF2}
\begin{equation}
| u(x) -a |<q \;,\; x\in B_{R- r_q} (x_0)
\end{equation}
By \eqref{DifIn2} $ v(x) := |u(x)-a|^2 $ satisfies
\begin{equation}\label{PointwiseEsteq14}
\begin{cases} \Delta v \geq c^2 v^{\frac{\alpha}{2}} \;\: \textrm{weakly in} W^{1,2}(B_{R-r_q}(x_0)) \\ v \leq \delta \;\;\;\;\; \textrm{on} \;\: \partial B_{R-r_q}(x_0)
\end{cases}
\end{equation}\label{PointwiseEsteq15}
and therefore by Lemma \ref{LemDC1}
\begin{equation}
d(y_0, \partial B_{R-r_q}(x_0)) > R_0 \Rightarrow v(y_0) =0 
\end{equation}
where
\begin{equation}\label{PointwiseEsteq16}
R_0 = \frac{\sqrt{n(\alpha +2)}}{(1 - \frac{\alpha}{2})c} q^{1-\frac{\alpha}{2}} \;\;,\; 0 < \alpha<2 \;\:,\; c^2 = 2 \alpha C^*
\end{equation}
Therefore
\begin{equation}\label{PointwiseEsteq17}
u(x) = a \;\;\; \textrm{in} \;\: B_{R- r_q - R_0}(x_0)
\end{equation}
To conclude set $ R=C q^{-\alpha} $ and impose the requirement that
\begin{equation}\label{PointwiseEsteq18}
\frac{C}{2} q^{-\alpha} \leq C q^{-\alpha} - r_q -R_0
\end{equation}
which is satisfied if
\begin{equation}\label{PointwiseEsteq19}
C \geq \frac{2^{\alpha +2} C_0}{C_1 C^*} + 2\frac{\sqrt{n(\alpha +2)}}{(1 - \frac{\alpha}{2})\sqrt{2 \alpha C^*}}(\frac{\rho_0}{2})^{1+ \frac{\alpha}{2}} =: \hat{C}(\alpha , n)
\end{equation}

The proof of Proposition \ref{PointwiseEst} is complete.

\end{proof}

\subsection{Proof of Theorem \ref{ThEquivMin}}
$ \\ $
\begin{proof}
\underline{Step 1} (Existence of a positive minimizer)
$ \\ \\ $

We will be establishing the existence of a map $ u_R \in W^{1,2} (B_R, \mathbb{R}^n) $ that is equivariant, positive and also a minimizer in the equivariant class of
\begin{equation}\label{ThEquivMineq1}
J_{B_R}(u) = \int_{B_R} (\frac{1}{2} |\nabla u|^2 +W(u))dx \;\;,\; B_R = \lbrace |x| < R \rbrace \subset \mathbb{R}^n,
\end{equation}
that satisfies the Basic Estimate
\begin{equation}\label{ThEquivMineq2}
J_{B_r}(u_R) \leq C r^{n-1} \;\;,\; r_0 < r < R \;\:,\; R \geq R_0
\end{equation}
$C$ independent of $ R\:, r . \\ $

We introduce the regularized energy functional

\begin{equation}\label{ThEquivMineq3}
J_{B_R}^{\varepsilon}(u)= \int_{B_R} (\frac{1}{2} |\nabla u|^2 + W^{\varepsilon}(u) )dx
\end{equation}
where $ W^{\varepsilon} $ is obtained from $ W $ by regularizing only at the minima as in Figure below.

\tikzset{every picture/.style={line width=0.75pt}} 

\begin{tikzpicture}[x=0.75pt,y=0.75pt,yscale=-1,xscale=1]

\draw    (126.8,151.2) -- (535.8,152.2) ;
\draw    (320.8,37.2) -- (320.8,242.2) ;
\draw  [color={rgb, 255:red, 0; green, 0; blue, 0 }  ,draw opacity=1 ] (287.8,117.2) .. controls (309.8,162.53) and (331.8,162.53) .. (353.8,117.2) ;
\draw    (320.8,151.2) .. controls (329.8,107.2) and (458.8,76.2) .. (525.8,77.2) ;
\draw    (138.8,79.2) .. controls (215.8,80.2) and (310.8,112.2) .. (320.8,151.2) ;

\draw (499,38.4) node [anchor=north west][inner sep=0.75pt]    {$W$};
\draw (343,164.4) node [anchor=north west][inner sep=0.75pt]    {$W^{\varepsilon }$};

\end{tikzpicture}
with
\begin{equation}\label{ThEquivMineq4}
(\star) \begin{cases} W^{\varepsilon} \rightarrow W(u) \;,\: \textrm{uniformly on compacts} \\ W^{\varepsilon} \in C^2 \;,\; ||W^{\varepsilon} ||_{C^{\alpha}} < C \;,\; \textrm{for} \;\: W \;\: \textrm{satisfying} \;\: (\textbf{H1}) \:, \\ W^{\varepsilon} \geq 0 \;,\; \lbrace W^{\varepsilon} = 0 \rbrace = \lbrace W = 0 \rbrace \\ W^{\varepsilon}(gu) = W^{\varepsilon}(u) \;\;, \textrm{for all} \;\: g \in G \;\: \textrm{and} \;\: u \in \mathbb{R}^n.
\end{cases}
\end{equation}

We can assume that
\begin{equation}\label{ThEquivMineq5}
W^{\varepsilon}(u) = W(u) \;\;\; \textrm{for} \;\: |u| \geq M >0
\end{equation}
some $ M>0 $, and that the minimizer of $ J_{B_R}^{\varepsilon} $ in the equivariant class satisfies the bound
\begin{equation}\label{ThEquivMineq6}
|u_R^{\varepsilon}| \leq M \;\;\;,\; x \in B_R
\end{equation}
with $ M $ independent of $ \varepsilon $ and $ R $ and that moreover $ u_R^{\varepsilon} $ is positive. Here we are utilizing \cite{AFS} Lemma 6.1.

We begin by establishing the H\"older Estimate \eqref{TheHolderEstimate}, for $ u_R^{\varepsilon} $, with constant $ C $ independent of $ \varepsilon \;, R $. Recall that $ u_R^{\varepsilon} $ is a minimizer in the equivariant class, while \eqref{TheHolderEstimate} was derived under the stronger hypothesis of being a minimizer under arbitrary perturbations. We point out only the necessary modifications of the proof of the Lemma \ref{LemHC}.

We will derive

\begin{equation}\label{ThEquivMinExtraeq1}
| u_R^{\varepsilon} (x) - u_R^{\varepsilon} (y) | \leq C | x-y | \: \textrm{ln} |x-y|^{-1} \;\;\;, \forall \: x,y \in B_R(0) \setminus B_1(0)
\end{equation}
with $ |x-y| \leq \frac{1}{2}\;\:,\; R \geq 2 . $

Notice that we can cover $ F_R \cap (B_R(0) \setminus B_1(0)) = : F_{R,D} $ where $ F_R = F \cap B_R(0) $ by two types of balls $ B_{\frac{1}{4}} (x_0): \\ $ (a) Balls entirely contained in $ F_{R,D} \;,\; B_{\frac{1}{4}} \subset F_{R,D} \: , \\ $ (b) balls $ B_{\frac{1}{4}}(x_0) $ having their center in the wall of $ F_R $ which is made up of reflection planes in $ G_a . \\ $ Notice that both types can be equivariantly extended over $ B_R(0) \setminus B_1(0) $ as sets.

Fix now $ B_r(x_0) \;,\: r < \frac{1}{4} $ as in the proof of \eqref{Lem21eq1}. Due to the equivariant extension of $ v_r $ there, and the minimality of $ u_R^{\varepsilon} $ in the equivariant class, we see that $ u_R^{\varepsilon} $ has the minimizing property on $ B_r(x_0) $ and so \eqref{Lem21eq3} applies as before. The rest of the argument is unchanged.

Thus \eqref{ThEquivMinExtraeq1} is established. $ \\ $

\tikzset{every picture/.style={line width=0.75pt}} 

\begin{tikzpicture}[x=0.75pt,y=0.75pt,yscale=-1,xscale=1]

\draw   (121.1,161.6) .. controls (121.1,143.21) and (136.01,128.3) .. (154.4,128.3) .. controls (172.79,128.3) and (187.7,143.21) .. (187.7,161.6) .. controls (187.7,179.99) and (172.79,194.9) .. (154.4,194.9) .. controls (136.01,194.9) and (121.1,179.99) .. (121.1,161.6) -- cycle ;
\draw    (437.8,161.2) -- (154.4,161.6) ;
\draw    (385.8,7.6) -- (154.4,161.6) ;
\draw    (385.8,7.6) .. controls (410.8,14.6) and (449.8,128.2) .. (437.8,161.2) ;
\draw   (258.8,161.6) .. controls (258.8,156.35) and (263.05,152.1) .. (268.3,152.1) .. controls (273.55,152.1) and (277.8,156.35) .. (277.8,161.6) .. controls (277.8,166.85) and (273.55,171.1) .. (268.3,171.1) .. controls (263.05,171.1) and (258.8,166.85) .. (258.8,161.6) -- cycle ;
\draw   (311.8,127.6) .. controls (311.8,122.08) and (316.28,117.6) .. (321.8,117.6) .. controls (327.32,117.6) and (331.8,122.08) .. (331.8,127.6) .. controls (331.8,133.12) and (327.32,137.6) .. (321.8,137.6) .. controls (316.28,137.6) and (311.8,133.12) .. (311.8,127.6) -- cycle ;

\draw (78,143.4) node [anchor=north west][inner sep=0.75pt]    {$B_{1}( 0)$};
\draw (439,29.4) node [anchor=north west][inner sep=0.75pt]    {$F_{R}$};
\draw (62,220.4) node [anchor=north west][inner sep=0.75pt]    {$Fig:$};
\draw (97,221) node [anchor=north west][inner sep=0.75pt]   [align=left] {Typical};
\draw (150,219.4) node [anchor=north west][inner sep=0.75pt]    {$B_{\frac{1}{4}}( x_{0}) \ 's$};
\draw (219,220) node [anchor=north west][inner sep=0.75pt]   [align=left] {covering the fundamental region};
\draw (99,240) node [anchor=north west][inner sep=0.75pt]   [align=left] {and extensible equivariantly on};
\draw (315,238.4) node [anchor=north west][inner sep=0.75pt]    {$B_{R}( 0) \setminus B_{1}( 0) .$};

\end{tikzpicture}

$ \\ $

Now we will proceed to establish \eqref{ThEquivMineq2},
\begin{equation}\label{ThEquivMinExtraeq2}
J_{B_r(0)}(u_R^{\varepsilon}) \leq Cr^{n-1} \;\;\:,\; \forall \: r \in (2,R-1)
\end{equation}
with $ C $ constant independent of $ \varepsilon $ and $ R \:, \; C=C(M) . \\ $
We follow \cite{AFS} Proposition 6.1, and for $ 2<r< R-1 $ we define
\begin{equation}\label{ThEquivMinExtraeq3}
u_{aff}(x) = \begin{cases} d(x, \partial D) a_1 \;\;\; , \textrm{for} \;\: x \in D_R \;\: \textrm{and} \;\: d(x, \partial D) \leq 1 \\
a_1 \;\;\;\;\;\;\;\;\;\;\;\;\;\;\;\;, \textrm{for} \;\: x \in D_R \;\: \textrm{and} \;\: d(x,\partial D) \geq 1
\end{cases}
\end{equation}
where $ D_R = D \cap B_R $ and extend equivariantly in $ B_R $. Since $ u_{aff} $ vanishes on $ \partial D $, the extended map is also continuous. As it is well known, the distance is 1-Lipschitz and therefore in $ W^{1,\infty} (B_R). $ Fix now a number $ h \in (0,1) $ and for $ r \in (2,R-1) $ define
\begin{equation}\label{ThEquivMinExtraeq4}
\hat{u}_R^{\varepsilon}(x) = \varphi(1- \frac{|x|-(r-h)}{h}) u_{aff}(x) + \phi(\frac{|x|-(r-h)}{h}) u_R(x)
\end{equation}
where $ \phi : \mathbb{R} \rightarrow [0,1] $ is a fixed $ C^1 $ function such that $ \phi (s) =0 $, for $ s \leq 0 $ and $ \phi (s) =1 $, for $ s \geq 1 $. Note that $ \hat{u}_R^{\varepsilon} \in W^{1,2}_E (B_R(0); \mathbb{R}^n) $ (equivariant), and most importantly $ \hat{u}_R^{\varepsilon}= u_R^{\varepsilon} $ on $ \partial B_r(0). $ Moreover $ \hat{u}_R = u_{aff} $ in $ B_{r-h}(0) $ and $ \hat{u}_R^{\varepsilon} = u_R^{\varepsilon} $ on $ B_R(0) \setminus B_1(0) $ and $ u_{aff}=a_1 $ if $ d(x, \partial D) \geq 1 $. By the minimality of $ u_R^{\varepsilon} $ we have
\begin{equation}\label{ThEquivMinExtraeq5}
\begin{gathered}
J_{B_r(0)}(u_R^{\varepsilon}) \leq J_{B_r(0)}(\hat{u}_R^{\varepsilon})  \\ = \int_{B_{r-h} \cap \lbrace d(x,\partial D) \leq 1 \rbrace} ( \frac{1}{2}| \nabla \hat{u}_R^{\varepsilon}|^2 + W( \hat{u}_R^{\varepsilon}) ) dx + \int_{B_r \setminus B_{r-h}} (\frac{1}{2} | \nabla \hat{u}_R^{\varepsilon} |^2 + W(\hat{u}_R^{\varepsilon}) )dx \\ \leq C_1 (r-h)^{n-1} + C_2 r^{n-1}
\end{gathered}
\end{equation}
where for the estimate of the $ 2^{nd} $ term we used the H\"older estimate above and the analogous \eqref{LemJRn-1eq2}, \eqref{LemJRn-1eq3}.

Hence \eqref{ThEquivMinExtraeq2} is established.

$ \\ $
Thus for any $ R>0 $ there exists $ C_R >0 $, independent of $ \varepsilon >0 $, such that
\begin{equation}\label{ThEquivMineq8}
\int_{B_R} (\frac{1}{2} |\nabla u_R^{\varepsilon}|^2 + W^{\varepsilon}(u_R^{\varepsilon}))dx < C_R
\end{equation}
Out of the above uniform bounds we claim that there exists $ u_R \in W^{1,2} (B_R ; {\mathbb{R}}^m) $ such that

$ \\ $ (1) $ u_R^{\varepsilon} \rightharpoonup u_R $ weakly in $ W^{1,2} (B_R; {\mathbb{R}}^m) $ as $ \varepsilon \rightarrow 0 $ on a subsequence,

$ \\ $ (2) $ u_R $ is a minimizer of 
\begin{align*}
J_{B_R}(u) = \int_{B_R} ( \frac{1}{2} | \nabla u |^2 + W(u)) dx \;\: ,
\end{align*}

$ \\ $ (3) $ J_{B_r} (u_R ) \leq C r^{n-1} $ with $ C $ independent of $ \varepsilon $ and $ R \;\: $,

$ \\ $ (4) $ u_R $ is equivariant and positive.

$ \\ $

By \eqref{ThEquivMineq8} and $ W^{\varepsilon} \geq 0 $ and the Rellich-Kondrachov theorem, we can obtain, for a subsequence
\begin{align*}
u_R^{\varepsilon} \rightharpoonup u_R \;\;\; \textrm{on} \;\; W^{1,2} (B_R ; {\mathbb{R}}^m) 
\end{align*}
and 
\begin{align*}
u_R^{\varepsilon} \rightarrow u_R \;\;\;on \;\; L^p (B_R ; {\mathbb{R}}^m) 
\end{align*}
These establish claims (1) and (4).

In order to show claim (2) we take $ \phi \in C^{\infty}_c ({\mathbb{R}}^n) \;\:, \; supp \phi \subset K \subset B_R $. Then by minimality we have:
\begin{align*}
\begin{gathered} J_{B_R}^{\varepsilon} ( u^{\varepsilon}_R + \phi) -  J_{B_R}^{\varepsilon} ( u^{\varepsilon}_R ) \geq 0 \\
\Leftrightarrow \int_{B_R} ( \nabla u^{\varepsilon}_R \nabla \phi + \frac{1}{2} | \nabla \phi |^2 + W^{\varepsilon} ( u^{\varepsilon}_R + \phi ) - W^{\varepsilon} ( u^{\varepsilon}_R) )dx \geq 0
\end{gathered}
\end{align*}
Let $ I_1^{\varepsilon} := \int_{B_R} \nabla u^{\varepsilon}_R \nabla \phi dx $ and $ I_2^{\varepsilon} := \int_{B_R} ( W^{\varepsilon} ( u^{\varepsilon}_R + \phi ) - W^{\varepsilon} ( u^{\varepsilon}_R))dx $.

Thanks to (1) before we have $ I_1^{\varepsilon} \rightarrow I_1 = \int_{B_R} \nabla u_R \nabla \phi dx \\ $ we split:
\begin{align*}
I_2 = \int_{B_R} (W^{ \varepsilon} (u^{\varepsilon}_R + \phi) - W (u^{\varepsilon}_R + \phi))dx + \int_{B_R} (W( u^{\varepsilon}_R + \phi) - W^{\varepsilon} (u^{\varepsilon}_R))dx
\end{align*}
Let $ I^{\varepsilon}_{21} := \int_{B_R} (W^{ \varepsilon} (u^{\varepsilon}_R + \phi) - W (u^{\varepsilon}_R + \phi))dx $ and $ I^{\varepsilon}_{22} := \int_{B_R} (W( u^{\varepsilon}_R + \phi) - W^{\varepsilon} (u^{\varepsilon}_R ))dx \;\;,\; I^{\varepsilon}_{21} \rightarrow 0 $ as $ \varepsilon \rightarrow 0 $ because of the uniform bound $ | u^{\varepsilon}_R (x) | \leq M $ the uniform convergence on compacts of $ W^{\varepsilon} $ to $ W $ and the dominated convergence theorem. $ \\ $
Also $ I^{\varepsilon}_{22} \rightarrow I_{22} = \int_{B_R} ( W( u_R + \phi) - W(u_R))dx $ because of the $ L^p $ convergence of $ u^{\varepsilon}_R $ to $ u_R $, dominated convergence and continuity of $ W . \\ $

Thus we establish the claimed relation (2). $ \\ $
In order to get the claimed relation (3) we recall 
\begin{align*}
J^{\varepsilon}_{B_r} ( u^{\varepsilon}_R ) = \int_{B_r} (\frac{1}{2} |\nabla u^{\varepsilon}_R |^2 + W^{\varepsilon} ( u^{\varepsilon}_R))dx \leq C r^{n-1}
\end{align*}
with $ C $ depending only on $ M $, but not on $ R $ nor on $ \varepsilon $. 

As $ u_R^{\varepsilon} \rightharpoonup u_R $ in $ W^{1,2} \Rightarrow \int_{B_R} | \nabla u_R |^2 dx \leq \liminf \int_{B_R} \frac{1}{2} |\nabla u^{\varepsilon}_R|^2 dx $ and we have
\begin{align*}
\int_{B_R} W^{\varepsilon} ( u^{\varepsilon}_R) dx \rightarrow \int_{B_R} W(u_R) dx
\end{align*}
arguing as in the treatment of the $ I_2 $ before. $ \\ $

\textbf{Claim:} There exists $ \overline{u} \in W^{1,2}_{loc} ( {\mathbb{R}}^n ;{\mathbb{R}}^m) $ nontrivial equivariant, positive and minimizer of 
\begin{equation}\label{ThEquivMineq9}
J_{\Omega}(u) = \int_{\Omega} (\frac{1}{2} | \nabla u |^2 + W(u))dx
\end{equation}
In addition, $ \overline{u} $ satisfies the estimate
\begin{equation}\label{ThEquivMineq10}
J_{B_r} ( \overline{u}) \leq c r^{n-1}
\end{equation}
$ \\ $
\textit{Proof.}

We have that out of the uniform bound $ J_{B_r} ( u_R) \leq c r^{n-1} $, we get as before, in the proof of the claims (1)-(4) that $ u_R \rightharpoonup \overline{u} $ in $ W^{1,2}_{loc} ( {\mathbb{R}}^n ; {\mathbb{R}}^m) $ and that $ \overline{u} $ is equivariant and positive. We can argue similarly as in the proof of (2) above to get that $ \overline{u} $ is a minimizer of $ J_{\Omega} $ defined in \eqref{ThEquivMineq9}, \eqref{ThEquivMineq10} follows from (3).
$ \\ \\ $



$ \\ \\ $
\textit{Step 2.} (Existence of a free boundary) $ \\ $

We utilize that $ D $ contains a unique zero $ a_1 $ of $ W $ and that by equivariance we can restrict $ u $ in $ D $ and note that
\begin{align*}
d(u(D), \lbrace W = 0 \rbrace \setminus a_1) \geq k >0
\end{align*}

For implementing Proposition \ref{PointwiseEst} we need a couple of observarions. Firstly $ u $ is minimizing in the class of equivariant positive maps. We recall that in the proof of Proposition \ref{PointwiseEst} the density estimate \eqref{TheDensityEstimate} is utilized. We note that in the proof of the density estimate the energy comparison maps are obtained by reducing the modulus of the map $ q^u (x) = |u(x) -a_1| $ and leaving the angular part $ \nu^u (x) $ unchanged, $ u(x) = a_1 + q^u(x) \nu^u(x) \;\:,\; \sigma (x) = a_1 + q^{\sigma}(x) \nu^u(x) \;\:,\; 0 \leq q^{\sigma}(x) \leq q^u(x) . \\ 
\\ $

\tikzset{every picture/.style={line width=0.75pt}} 

\begin{tikzpicture}[x=0.75pt,y=0.75pt,yscale=-1,xscale=1]

\draw    (161.8,199.2) -- (413.8,200.2) ;
\draw    (161.8,199.2) -- (321.8,69.2) ;
\draw    (359,200) -- (355.92,147.2) ;
\draw [shift={(355.8,145.2)}, rotate = 446.66] [color={rgb, 255:red, 0; green, 0; blue, 0 }  ][line width=0.75]    (10.93,-3.29) .. controls (6.95,-1.4) and (3.31,-0.3) .. (0,0) .. controls (3.31,0.3) and (6.95,1.4) .. (10.93,3.29)   ;
\draw    (359,200) -- (353.91,107.2) ;
\draw [shift={(353.8,105.2)}, rotate = 446.86] [color={rgb, 255:red, 0; green, 0; blue, 0 }  ][line width=0.75]    (10.93,-3.29) .. controls (6.95,-1.4) and (3.31,-0.3) .. (0,0) .. controls (3.31,0.3) and (6.95,1.4) .. (10.93,3.29)   ;

\draw (244,163.4) node [anchor=north west][inner sep=0.75pt]    {$F$};
\draw (369,92.4) node [anchor=north west][inner sep=0.75pt]    {$u( x)$};
\draw (373,151.4) node [anchor=north west][inner sep=0.75pt]    {$\sigma ( x)$};
\draw (354,210.4) node [anchor=north west][inner sep=0.75pt]    {$a_{1}$};

\end{tikzpicture}

$ \\ $

Therefore by the convexity of $ F $ the comparison map $ \sigma (x) $ is also positive, $ \sigma ( \overline{F} ) \subset \overline{F} $, and it can be extended equivariantly from $ F $ to $ \mathbb{R}^n $ since $ B_R(x_0) \subset F $ or $ B_R(x_0) \subset D $ with $ x_0 \in \partial F $, in the boundary of $ F $, which consists of reflection planes in $G_{a_1}. $

Thus Proposition \ref{PointwiseEst} (ii) can be applied for a fixed $ q $, with $ 2 q \leq \rho $, to produce the estimate
\begin{equation}\label{ThEquivMineq11}
B_{C q^{-\alpha}} (x_0) \subset D \Rightarrow u(x) \equiv a_1 \;\: \textrm{in} \; B_{\frac{C}{2}q^{-\alpha}}(x_0)
\end{equation}
for $ C \geq \hat{C}(\alpha,n). \\ $

By taking a sequence of $ C's $ tending to infinity via a covering argument we see that
\begin{equation}\label{ThEquivMineq12}
u(x) \equiv a_1 \;\; \textrm{if} \; d(x, \partial D) \geq \hat{C}(\alpha,n) q^{-\alpha}
\end{equation}

\tikzset{every picture/.style={line width=0.75pt}} 

\begin{tikzpicture}[x=0.75pt,y=0.75pt,yscale=-1,xscale=1]

\draw    (198.8,182.2) -- (320.8,301.2) ;
\draw    (286.8,17.2) -- (198.8,182.2) ;
\draw    (261,170) -- (361,270) ;
\draw    (334.8,28.2) -- (261,170) ;
\draw    (181,79) -- (205.17,96.05) ;
\draw [shift={(206.8,97.2)}, rotate = 215.2] [color={rgb, 255:red, 0; green, 0; blue, 0 }  ][line width=0.75]    (10.93,-3.29) .. controls (6.95,-1.4) and (3.31,-0.3) .. (0,0) .. controls (3.31,0.3) and (6.95,1.4) .. (10.93,3.29)   ;
\draw   (277.8,56.7) .. controls (277.8,33.78) and (296.38,15.2) .. (319.3,15.2) .. controls (342.22,15.2) and (360.8,33.78) .. (360.8,56.7) .. controls (360.8,79.62) and (342.22,98.2) .. (319.3,98.2) .. controls (296.38,98.2) and (277.8,79.62) .. (277.8,56.7) -- cycle ;
\draw   (260.8,84.6) .. controls (260.8,61.07) and (279.87,42) .. (303.4,42) .. controls (326.93,42) and (346,61.07) .. (346,84.6) .. controls (346,108.13) and (326.93,127.2) .. (303.4,127.2) .. controls (279.87,127.2) and (260.8,108.13) .. (260.8,84.6) -- cycle ;
\draw   (242.8,118.1) .. controls (242.8,94.85) and (261.65,76) .. (284.9,76) .. controls (308.15,76) and (327,94.85) .. (327,118.1) .. controls (327,141.35) and (308.15,160.2) .. (284.9,160.2) .. controls (261.65,160.2) and (242.8,141.35) .. (242.8,118.1) -- cycle ;
\draw    (264.8,274.2) -- (273.8,263.2) ;
\draw    (322.8,206.2) -- (315,215) ;

\draw (170,57.4) node [anchor=north west][inner sep=0.75pt]    {$D$};
\draw (275,225.4) node [anchor=north west][inner sep=0.75pt]    {$\hat{C} q^{-\alpha }$};

\end{tikzpicture}



$ \\ $

The proof of Theorem \ref{ThEquivMin} is complete.

\end{proof}

\subsection{Proof of Proposition \ref{PropLowerBd}}

$ \\ $
\begin{proof}
From the assumption \eqref{PropLowerBdeq2} and the Basic Estimate we have
\begin{align*}
\int_{B_R(x_0)} \chi_{\lbrace u \neq a_i \rbrace}dx = \int_{B_R(x_0)} \chi_{A^c}(u)dx \leq CR^{n-1}
\end{align*}
But
\begin{align*}
\int_{B_R(x_0)} \chi_{\lbrace u \neq a_i \rbrace}dx = \mathcal{L}^n ( \lbrace |u - a_i|>0 \rbrace \cap B_R(x_0))
\end{align*}
Hence
\begin{align*}
\mathcal{L}^n ( \lbrace u = a_i \rbrace \cap B_R(x_0)) \geq | B_R(x_0)| - cR^{n-1} \geq CR^n \;\; , \; R> R_0.
\end{align*}
\end{proof}

\subsection{Proof of Proposition \ref{PropFinitePer}}
$ \\ $

\begin{proof}
Let
\begin{align*}
0 < \theta < d_0 := \min \lbrace | a_i - a_j | : i \neq j \;,\: i,j \in \lbrace 1,...,N \rbrace \rbrace
\end{align*}
$ \theta $ arbitrary otherwise. $ \\ \\ $
1. We claim that there exist at least two distinct points $ a_i \neq a_j $ in $ A $ such that
\begin{align*}
\mathcal{L}^n (B_R(x_0) \cap \lbrace | u-a_k| \leq \theta \rbrace ) \geq C_k R^n \;\;\;,\; R \geq R_0 \;\:,\; k =i,j
\end{align*}
$ C_k = C_k ( \theta ) . \\ \\ $
\textit{Proof of the Claim.}
Since $ u $ is a nonconstant minimizer, there is $ x_1 $ such that $ u(x_1) \neq a_1 $
\begin{align*}
\Rightarrow \mathcal{L}^n (B_{\tilde{R}_0}(x_1) \cap \lbrace | u-a_1 | > \lambda \rbrace) \geq \mu_0 \;\;\; (\textrm{by continuity, for some} \;\: \tilde{R}_0 \:, \mu_0 >0 \;\: \textrm{and} \;\: \lambda >0 \;\: \textrm{small})
\end{align*}
and therefore by the Density Estimate \eqref{TheDensityEstimate} we have:
\begin{equation}\label{PropFinitePereq4}
\mathcal{L}^n (B_R(x_1) \cap \lbrace | u-a_1 | > \lambda \rbrace) \geq cR^n \;\;,\; R \geq \tilde{R}_0 .
\end{equation}
Notice that by \eqref{PropFinitePereq4}, there is $ R_1(x_0) >0 $ such that
\begin{equation}\label{PropFinitePereq5}
\mathcal{L}^n (B_R(x_0) \cap \lbrace | u-a_1 | > \lambda \rbrace) \geq c_1R^n \;\;,\; R \geq R_1(x_0) .
\end{equation}
Similarly, since $ u \neq a_k $ there is $ x_k $ such that $ u(x_k) \neq a_k $ and we can repeat the arguments above with $ x_k $ in the place of $ x_1 $ to obtain
\begin{equation}\label{PropFinitePereq6}
\mathcal{L}^n (B_R(x_0) \cap \lbrace | u-a_k | > \lambda \rbrace ) \geq c_k R^n \;\;,\; R \geq R_k \;,\: k=2,...,N
\end{equation}
for some small $ \lambda >0 . \\ $
By Remark 5.4 in \cite{AFS}, $ \forall \; \lambda_1,...,\lambda_N \in (0,d_0) $ we have
\begin{equation}\label{PropFinitePereq7}
\mathcal{L}^n (B_R(x_0) \cap \lbrace | u-a_k | > \lambda_k \rbrace) \geq c_k R^n \;\;,\; R \geq R_0 \;,\: (R_0 = \max_{k \in \lbrace 1,...,N \rbrace} R_k).
\end{equation}
So, if $ \lambda < d_0 - \theta $ and $ | u-a_1| \leq \theta < d_0 \leq | a_1 - a_2 | $
\begin{align*}
\Rightarrow | u-a_2 | \geq | a_1 -a_2 | - \theta > \lambda >0 \Rightarrow \lbrace |u-a_1| \leq \theta \rbrace \subset \lbrace |u-a_2| > \lambda \rbrace.
\end{align*}
Thus
\begin{align}\label{PropFinitePerDefA2}
A_2 := \bigcup_{k=1 \;,\; k\neq 2}^N \lbrace |u-a_k| \leq \theta \rbrace \subset \lbrace |u-a_2| > \lambda \rbrace
\end{align}
\begin{equation}\label{PropFinitePereq8}
\begin{gathered}
\Rightarrow A_2 \cup [ \lbrace |u-a_2| > \lambda \rbrace \cap A_2^c ] = \lbrace |u-a_2| > \lambda \rbrace \\
\Leftrightarrow A_2 \cup [ \lbrace |u-a_2| > \lambda \rbrace \cap ( \bigcap_{k=1 \;,\; k \neq 2}^N \lbrace |u-a_k| > \theta \rbrace ) ] = \lbrace |u-a_2| > \lambda \rbrace
\end{gathered}
\end{equation}
and from the Basic Estimate \eqref{TheBasicEstimate} and the hypothesis \textbf{(H1)} on $ W $ we have
\begin{align*}
\mathcal{L}^n(B_R(x_0) \cap \lbrace |u-a_2| > \lambda \rbrace \cap ( \bigcap_{k=1 \;,\; k \neq 2}^N \lbrace |u-a_k| > \theta \rbrace )) \leq \overline{c}R^{n-1}
\end{align*}
Hence, by \eqref{PropFinitePereq7} and \eqref{PropFinitePereq8} it holds
\begin{align*}
\mathcal{L}^n(B_R(x_0) \cap A_2) \geq \overline{c}_2 R^n \Leftrightarrow \mathcal{L}^n (B_R(x_0) \cap ( \bigcup_{k=1 \;,\; k\neq 2}^N \lbrace |u-a_k| \leq \theta \rbrace) ) \geq \overline{c}_2 R^n
\end{align*}
and similarly, if $ A_l := \bigcup_{k=1 \;,\; k\neq l}^N \lbrace |u-a_k| \leq \theta \rbrace \;\:,\; l=1,2,...,N \: $, we have
\begin{align*}
\mathcal{L}^n (B_R(x_0) \cap (\bigcup_{k\neq l} \lbrace |u-a_k| \leq \theta \rbrace) ) \geq \overline{c}_l R^n \;\;,\; R \geq R_0 
\end{align*}
for all $ l=1,2,...,N . \\ $
Therefore there exist at least two $ i,j \: \in \lbrace 1,...,N \rbrace $ such that
\begin{align*}
\mathcal{L}^n (B_R(x_0) \cap \lbrace |u-a_k| \leq \theta \rbrace ) \geq \overline{c}_k R^n \;\; ,\: R \geq R_0 \;\:,\: k=i,j,
\end{align*}
and the claim is proved.
\begin{flushright}
$ \square $
\end{flushright}
$ \\ $
2. We now proceed to conclude the proof of Proposition \ref{PropFinitePer}. 
$ \\ $
Let $ \; \mathcal{A}_k^R := \overline{B_R(x_0)} \cap \lbrace |u-a_k| \leq \theta \rbrace \;\;,\; k=i,j $
\begin{equation}\label{PropFinitePereq9}
\begin{gathered}
\int_{\mathcal{A}_i^R} \chi_{\lbrace u \neq a_i \rbrace}(x) dx = \mathcal{L}^n ( \lbrace | u-a_i |>0 \rbrace \cap \mathcal{A}_i^R ) \\ = \mathcal{L}^n ( \bigcap_{k=1}^N \lbrace |u-a_k|>0 \rbrace \cap \mathcal{A}_i^R) \;\;\;(\textrm{by} \;\: \eqref{PropFinitePerDefA2}) \\ = \int_{\mathcal{A}_i^R} W^0 (u)dx \leq cR^{n-1} \;\;\;(\textrm{by the Basic Estimate} \;\: \eqref{TheBasicEstimate})
\end{gathered}
\end{equation}
\begin{equation}\label{PropFinitePereq10}
\begin{gathered}
\mathcal{L}^n (\lbrace u=a_i \rbrace \cap \mathcal{A}_i^R) = \mathcal{L}^n(\mathcal{A}_i^R) - \mathcal{L}^n (\lbrace u \neq a_i \rbrace \cap \mathcal{A}_i^R) \\
\geq c_i R^n - \mathcal{L}^n (\lbrace u \neq a_i \rbrace \cap \mathcal{A}_i^R) \;\;\; (\textrm{by Step 1.}) \\ \geq c_i R^n - cR^{n-1} \geq C_i R^n \;\;,\; R \geq R_0 \;\; (\textrm{by} \;\: \eqref{PropFinitePereq9})
\end{gathered}
\end{equation}
Similarly for $ \lbrace u=a_j \rbrace . \\ $ 

Now, for obtaining \eqref{PropFinitePereq3}, we utilize the isoperimetric inequality (see for example \cite{EG})

\begin{equation}\label{PropFinitePereq10}
\min \lbrace \mathcal{L}^n(\overline{B_R(x_0)} \cap E_i) \:,\: \mathcal{L}^n ( \overline{B_R(x_0)} \setminus E_i) \rbrace^{1-\frac{1}{n}} \leq 2 \hat{c} \: || \partial E_i ||(B_R(x_0))
\end{equation}
with $ E_i = \lbrace u(x) =a_i \rbrace \;\: (E_j= \lbrace u(x) =a_j \rbrace) .$ Utilizing \eqref{PropFinitePereq2}, we have
\begin{align*}
\mathcal{L}^n (\overline{B_R(x_0)} \cap E_i) \geq c_i R^n
\end{align*}
On the other hand
\begin{align*}
\overline{B_R(x_0)} \setminus E_i \supset \overline{B_R(x_0)} \cap E_j
\end{align*}
and once more by \eqref{PropFinitePereq2}
\begin{align*}
\mathcal{L}^n (\overline{B_R(x_0)} \cap E_j) \geq c_j R^n
\end{align*}
Thus the lower bound \eqref{PropFinitePereq3} follows.

The proof of Proposition \ref{PropFinitePer} is complete.
\end{proof}
$ \\ $

\subsection{Proof of Proposition \ref{PropSymmetrica=0}}
$ \\ \\ $
\begin{proof}
1. Here we require $ N=m+1 $ and invoke Lemma \ref{LemJaLim}, and thus produce an equivariant, positive minimizer for $ \alpha =0 $ satisfying the Basic Estimate \eqref{ThEquivMineq10}. We note that from equivariance and \eqref{ThEquivMineq10} it follows that $ u \neq $ constant (if $ u \equiv $ constant, from equivariance we would have that $ u \equiv (0,...,0) $ which contradicts the Basic Estimate \eqref{ThEquivMineq10} since $ (0,...,0) \notin \lbrace W=0 \rbrace $).
$ \\ \\ $ 2. By Proposition \ref{PropFinitePer} we have that there exist $ R_0 >0 $ and at least two distinct $ a_i \neq a_j \;(i,j \; \in \lbrace 1,...,N+1 \rbrace )$ such that
\begin{equation}\label{PropSymmetrica=0eq1}
\mathcal{L}^n ( B_R(0) \cap \lbrace u =a_k \rbrace) \geq c_k R^n \;\;,\; R\geq R_0 \;,\; k =i,j.
\end{equation}
We partition $ \mathbb{R}^n $ in $ D^1,...,D^{N+1} $ (see \textbf{(H3)}) where in each $ D^i $ there is a unique global minimum of $ W $ (i.e. $ a_i $ , and $ D^1 $ is denoted as $ D $). Thus $ u \neq a_j $ in the region $ D^i \; (i \neq j )$, so from \eqref{PropSymmetrica=0eq1} we have
\begin{equation}\label{PropSymmetrica=0eq2}
\mathcal{L}^n(B_R(0) \cap \lbrace u = a_i \rbrace) = \mathcal{L}^n ( D_R^i \cap \lbrace u =a_i \rbrace) \geq c_i R^n \;\;,\; R \geq R_0 \;,\; D_R^i =D^i \cap B_R(0)
\end{equation}
and from the equivariance of $ u $ we obtain
\begin{equation}\label{PropSymmetrica=0eq3}
\mathcal{L}^n( D_R^k \cap \lbrace u =a_k \rbrace ) \geq c_k R^n \;\;,\; R \geq R_0 \;,\; k=1,...,N+1.
\end{equation}
$ \\ $ 3. Finally, from the Basic Estimate \eqref{ThEquivMineq10}, we have
\begin{equation}\label{PropSymmetrica=0eq4}
\mathcal{L}^n(B_R(0) \cap (\bigcap_{i=1}^{N+1} \lbrace u \neq a_i \rbrace) = \int_{B_R(0)} W^0(u) dx \leq C R^{n-1}
\end{equation}
and therefore
\begin{equation}\label{PropSymmetrica=0eq5}
\mathcal{L}^n (D_R^1 \cap \lbrace u \neq a_1 \rbrace ) = \mathcal{L}^n (D_R^1 \cap (\bigcap_{i=1}^{N+1} \lbrace u \neq a_i \rbrace) \leq CR^{n-1}.
\end{equation}

The proof of Proposition \ref{PropSymmetrica=0} is complete.
\end{proof}

$ \\ \\ \\ $

\begin{appendix}
\begin{LARGE}
\textbf{Appendix}
\end{LARGE}
\section{The Containment}
\label{sec:appendixA}
$ \\ $

The following result was established by the first author and P. Smyrnelis in unpublished work \cite{AS}. We reproduce it here for the convenience of the reader. For related applications of the method of proof we refer to \cite{Sm}. $ \\ $

\begin{proposition}\label{PrCont} (\cite{AS}) $ \\ $
Let $ u : {\mathbb{R}}^n \rightarrow {\mathbb{R}}^m $ be a bounded ($ | u(x) | < M $) critical point of the functional
\begin{align*}
J (u) = \int ( \frac{1}{2} | \nabla u|^2 + W(u)) dx
\end{align*}
in the sense that $ \forall \Omega \subset {\mathbb{R}}^n $, open, bounded,
\begin{align*}
\frac{d}{d \varepsilon}|_{ \varepsilon =0} J_{\Omega} (u + \varepsilon \phi) = 0 \;\;\;,\; \forall \: \phi \in C_0^1 (\Omega)
\end{align*}
where
\begin{align}
W(u) =
\begin{cases}
W^{ \overline{\alpha}} (u) := \prod_{k=1}^{m+1} |u- a_k|^{{\alpha}_k} \;\;\;,\; \overline{\alpha} = ({\alpha}_1,...,{\alpha}_{m+1}) \;\:, \; 0 < {\alpha}_k \leq 2 \\  W^0 (u) := {\chi}_{\lbrace u \in S_A \rbrace}
\end{cases}
\end{align}
and $ S_A $ defined as the interior of the simplex with vertices $ a_1,...,a_m,a_{m+1} $,
\begin{align}
S_A := \lbrace \sum_{i=1}^{m+1} {\lambda}_i a_i \;\: ; \; {\lambda}_i \in [0,1) \:, \; \forall i=1,...,m+1 \:,\; \sum_{i=1}^{m+1} {\lambda}_i =1 \rbrace
\end{align}
Then
\begin{align}
u(x) \in {\overline{S}}_A \;,\; x \in {\mathbb{R}}^n 
\end{align}

For $ \alpha_k \in [0,1) $ we require that $ u $ in addition is a minimizer in the sense of \eqref{ELequation}, so that \eqref{eq:UnHolderBound} is available. $ \\ $
\end{proposition}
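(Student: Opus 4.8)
The plan is to compare $u$ with its pointwise nearest‑point projection onto the closed convex hull $K:=\overline{S}_A=\overline{co}\,\{a_1,\dots,a_{m+1}\}$ (a nondegenerate $m$‑simplex). Let $P\colon\R^m\to K$ denote the nearest‑point projection; it is $1$‑Lipschitz, and where it is differentiable $DP(y)$ is symmetric with eigenvalues in $[0,1]$. Set $N(x):=u(x)-P(u(x))$ and $\psi(x):=\tfrac12|N(x)|^2=\tfrac12\,\mathrm{dist}(u(x),K)^2$. Since $u$ is bounded, $\psi$ is bounded and continuous (Lemma \ref{LemHC} in the range $\alpha_k<1$), and the assertion $u(\R^n)\subset\overline{S}_A$ is exactly $\psi\equiv 0$. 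The strategy is: (i) show $\psi$ is subharmonic on $\R^n$ and \emph{strictly} so wherever $\psi>0$; (ii) conclude $\psi\equiv 0$ by a Liouville‑type argument.

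For step (i) the driving algebraic fact is the following. For $W=W^{\overline\alpha}$ one has $W^{\overline\alpha}_u(u)=W^{\overline\alpha}(u)\sum_{k}\alpha_k\frac{u-a_k}{|u-a_k|^2}$, and for each $k$ the variational characterization of the projection, $\langle u-P(u),z-P(u)\rangle\le 0$ for all $z\in K$, applied with $z=a_k\in K$, gives $\langle u-a_k,N\rangle=|N|^2+\langle P(u)-a_k,\,u-P(u)\rangle\ge|N|^2$. Hence, whenever $u(x)\notin K$ (so $u(x)\notin A$, thus $W^{\overline\alpha}(u(x))>0$),
\[
\langle W^{\overline\alpha}_u(u),N\rangle\ \ge\ W^{\overline\alpha}(u)\,|N|^2\sum_k\frac{\alpha_k}{|u-a_k|^2}\ >\ 0.
\]
Since $y\mapsto\tfrac12\,\mathrm{dist}(y,K)^2$ is $C^{1,1}$ and convex with gradient $N(\cdot)$ and Hessian $I-DP(\cdot)\succeq 0$, and since $u$ solves $\Delta u=W^{\overline\alpha}_u(u)$ on the open set $\{u\notin K\}$ — where $W^{\overline\alpha}$ is smooth because we are away from the vertices, and where $u$ is smooth by elliptic bootstrapping (for $\alpha_k<1$ the equation holds weakly and we use that $u$ is a minimizer and Lemma \ref{LemHC}) — the chain rule yields on $\{u\notin K\}$
\[
\Delta\psi\ =\ \big\langle\nabla u,\,(I-DP(u))\nabla u\big\rangle+\big\langle N,\,W^{\overline\alpha}_u(u)\big\rangle\ \ge\ \big\langle N,\,W^{\overline\alpha}_u(u)\big\rangle\ >\ 0 .
\]
Together with $\psi\equiv 0$ (hence $\Delta\psi=0$) on $\mathrm{int}\{u\in K\}$ and $\psi\ge0$, one gets that $\psi$ is subharmonic on all of $\R^n$ (approximate by $(\psi-\varepsilon)^+$ and let $\varepsilon\downarrow0$ to avoid discussing the free boundary $\partial\{u\notin K\}$).

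For step (ii), set $M^{*}:=\sup_{\R^n}\psi<\infty$ and suppose $M^{*}>0$; I would rule this out by a translation/compactness device, since plain boundedness does not force a subharmonic function on $\R^n$ ($n\ge3$) to be constant. Pick $x_j$ with $\psi(x_j)\to M^{*}$ and put $u_j:=u(\cdot+x_j)$, still bounded critical points (minimizers when $\alpha_k<1$) with the same constants. As $u(x_j)$ stays a fixed distance from $K$, hence from the vertices, for $j$ large, the uniform modulus‑of‑continuity estimate (Lemma \ref{LemHC} for $\alpha_k<1$; interior Schauder estimates from $\Delta u=W^{\overline\alpha}_u(u)$ with bounded right‑hand side when $\alpha_k\ge1$) gives uniform $C^{1,\gamma}$ bounds for $u_j$ on a fixed ball $B_\rho(0)$; along a subsequence $u_j\to u_\infty$ in $C^{1}(B_\rho(0))$, with $u_\infty$ a bounded critical point, $u_\infty\notin K$ on $B_\rho(0)$ (shrinking $\rho$), and $\psi_\infty:=\tfrac12\,\mathrm{dist}(u_\infty,K)^2$ satisfying $\psi_\infty(0)=M^{*}=\max_{B_\rho}\psi_\infty$. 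By the displayed inequality $\Delta\psi_\infty\ge c_0>0$ on $B_\rho(0)$, so $\psi_\infty(x)-\tfrac{c_0}{2n}|x|^2$ is subharmonic there with a strict interior maximum at $0$ — impossible. Hence $M^{*}=0$, i.e. $u(\R^n)\subset\overline{S}_A$. For the degenerate potential $W^{0}$ the sign in $\Delta\psi=\langle\nabla u,(I-DP(u))\nabla u\rangle\ge0$ is no longer strict, so instead I would use that $u$ is now a minimizer: insert the admissible competitor $u_t:=u+t\zeta\,(P(u)-u)$ ($t>0$ small, $\zeta$ a cut‑off), for which the Dirichlet term decreases — strictly on $\{u\notin K,\ \nabla u\neq0\}$ — while the potential term is unchanged because the segment from $u$ to $P(u)$ does not re‑enter $S_A$; minimality then forces $\nabla u\equiv0$ on $\{u\notin K\}$ and a connectedness/continuity argument closes the case. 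Alternatively the $W^{0}$ statement is obtained from the $W^{\overline\alpha}$ one by passing to the limit $\overline\alpha\to0$ via the $\Gamma$‑convergence of Lemma \ref{LemJaLim}.

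The main obstacle is exactly step (ii): upgrading ``bounded and subharmonic'' to ``identically zero''. The naive Liouville theorem is false for $n\ge3$, so one genuinely needs the strict subharmonicity $\Delta\psi>0$ on $\{\psi>0\}$ produced by the algebraic lemma, \emph{together with} the translation‑compactness step that manufactures an interior maximum; and it is this step that forces the uniform regularity estimates, hence the additional minimizer hypothesis when $\alpha_k\in[0,1)$. A secondary, routine point is justifying the chain‑rule computation of $\Delta\psi$ in the low‑regularity range and across the free boundary, which the $(\psi-\varepsilon)^+$ approximation takes care of.
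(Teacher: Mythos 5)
Your proof is correct in its core mechanism and takes a genuinely different route from the paper's, though both arguments share the same structural skeleton (a scalar ``excess'' functional, strict subharmonicity where $u\notin\overline{S}_A$, translation--compactness to manufacture an interior maximum, and the strong maximum principle). The paper, following an idea of Caffarelli--Garofalo--Segala \cite{CGS}, works with the \emph{linear} functional $P(u;x)=\langle u(x)-a_2,e\rangle$, where $e$ is the outward normal to a face $\Pi$ of the simplex; the computation $\Delta P=\langle W_u(u),e\rangle$ is immediate, the positivity of each term uses only that $\langle v-a_i,e\rangle>0$ for all $i$, and the argument is then run (implicitly) face by face to obtain containment in every supporting half--space. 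You instead take the squared distance $\psi=\tfrac12\,d(u,\overline{S}_A)^2$, which detects escape from the whole simplex at once; the price is a chain--rule computation for the $C^{1,1}$ function $d_K^2$ and the variational characterization of the projection giving $\langle u-a_k,u-P(u)\rangle\ge|u-P(u)|^2$, but the payoff is that a single subharmonic scalar does the job, and the convexity term $\langle\nabla u,(I-DP(u))\nabla u\rangle\ge0$ drops out harmlessly. The translation--compactness step is the same in spirit: the paper takes a maximizing sequence over the whole class $F_M$ (so translates are automatically admissible), whereas you translate the single $u$, which is equivalent since the relevant bounds are translation invariant. Both approaches need the uniform H\"older estimate of Lemma~\ref{LemHC} when $\alpha_k\in(0,1)$ to pass to a $C^1_{\mathrm{loc}}$ limit, and then interior Schauder regularity to have $u$ classically smooth near the maximum point --- you state this correctly.

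One point worth flagging, though it affects the statement rather than your argument specifically: both your treatment and the paper's are incomplete for $\alpha=0$. A constant map $u\equiv v_0$ with $v_0\notin\overline{S}_A$ is a bounded minimizer of $J^0$ (both the Dirichlet and the $\chi_{S_A}$ term vanish), so the containment cannot hold without an additional nonconstancy hypothesis or a hypothesis forcing $u$ to touch $\overline{S}_A$ somewhere. Your ``connectedness/continuity argument'' would close the case only for minimizers whose escape set $\{u\notin\overline{S}_A\}$ is not all of $\mathbb{R}^n$; the $\Gamma$-convergence route via Lemma~\ref{LemJaLim} likewise applies only to those $\alpha=0$ minimizers that arise as limits of $\alpha_k>0$ minimizers. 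For $0<\alpha_k<2$ there is no such issue, because a constant $v_0\notin A$ cannot satisfy $\Delta u=W_u(u)$ (the right--hand side is nonzero); and your strict inequality $\Delta\psi\ge\langle N,W_u(u)\rangle>0$ precisely encodes this.
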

\begin{proof}

Following an idea from \cite{CGS} we introduce the set

1. $ {\alpha}_k \in (0,1) \;,\: k=1,..,m. $
\begin{align}
F_M := \lbrace u: {\mathbb{R}}^n \rightarrow {\mathbb{R}}^m \;,\; u \;\: \textrm{minimizer of} \;\: J \;,\; |u(x)| \leq M \rbrace
\end{align}
By Lemma \ref{LemHC} we have the uniform H\"older estimate
\begin{align}\label{eq:UnHolderBound}
| u |_{C^{\beta}( {\mathbb{R}}^n ; {\mathbb{R}}^m)} \leq C(M) \;\;\;\;, u \in F_M
\end{align}
Let $ \Pi $ be the face of the simplex $ {\overline{S}}_A $ defined by $ a_2,...,a_{m+1} $, oppposite to $ a_1 $ and let $ e \perp \Pi $.

Set
\begin{align}
P(u;x) = \langle u(x) - a_2 , e \rangle \;\;\;\;\;\;\;\;\;\;\;\;\;\;\;\;\;\;\;\;\;\;\;\;\;\;\;\;\;\;\;\;\;\;\;\;\;\;\;\;\;\;\;
\end{align}
where $ \langle \cdot , \cdot \rangle $ is the inner product in $ {\mathbb{R}}^m $ and the orientation of $ e $ is such that $ \langle a_2-a_1 ,e \rangle >0 $. Set
\begin{align*}
P_M := \sup \lbrace P(u;x) \; : \; u( \cdot ) \in F_M \;,\; x \in {\mathbb{R}}^n \rbrace
\end{align*}

\textbf{Claim:} $ \; P_M \leq 0 \\ $
Clearly the proposition follows from this claim. We proceed by contradiction. Suppose $ P_M >0 $. Thus there is $ \lbrace u_k \rbrace \in F_M \;,\; \lbrace x_k \rbrace \subset {\mathbb{R}}^n $, such that
\begin{align}
P_M - \frac{1}{k} \leq P(u_k, x_k) \leq P_M.
\end{align}
Set
\begin{align}
v_k (x):= u_k (x+ x_k),
\end{align}
and note that $ v_k \in F_M $ and
\begin{align}
P_M - \frac{1}{k} \leq P(v_k, 0) \leq P_M
\end{align}
By \eqref{eq:UnHolderBound},
\begin{align*}
| v_k |_{C^{\beta}( {\mathbb{R}}^n ; {\mathbb{R}}^m)} \leq C(M)
\end{align*}
hence by Arzela- Ascoli for a subsequence
\begin{align}
v_k \xrightarrow{{C^{\beta}}} v \;\;\;,\; \textrm{on compacts}
\end{align}
We have
\begin{align}
P(v; x) \leq P_M = P(v; 0) > 0 \;\:,\: x \in {\mathbb{R}}^n
\end{align}
By the continuity of $ v $ there is $ R >0 $ such that
\begin{align}
\frac{P_M}{2} \leq P(v; x) \leq P_M \;\:,\: x \in B(0;R)
\end{align}
\begin{align}
P(v_k ;x) = \langle v_k(x) -a_2, e \rangle \geq \frac{P_M}{4} \;\:,\: \textrm{on} \;\: B(0;R)
\end{align}
for $ k $ large.

Thus $ v_k (x) $ uniformly away from $ a_1,...,a_m,a_{m+1} $, we have
\begin{align}
\Delta v_k - W_u (v_k) = 0 \;\:,\: \textrm{in} \;\: B(0;R)
\end{align} 
classically, since $ W_u(u) \in C^1 $ away from $ a_1,...,a_m,a_{m+1} $ and $ x \mapsto W_u(v_k(x)) $ Holder by (A.10), thus $ u \in C^{2+ \beta} (B(0;R)) $.

We now calculate:
\begin{align*}
\Delta P = \langle \Delta v, e \rangle = \langle W_u(u),e \rangle
\end{align*}
\begin{align*}
\frac{\partial}{\partial v_j}W(v) = \frac{\partial}{\partial v_j}({\prod}_{ \nu =1}^{m+1} | v-a_{\nu} |^{{\alpha}_{\nu}} ) = \sum_{i=1}^{m+1} \frac{\partial}{\partial v_j} (| v-a_i|^{{\alpha}_i}) {\prod}_{ \nu \neq i} | v-a_{\nu}|^{ {\alpha}_{\nu}}
\end{align*}
Notice that
\begin{align*}
\frac{\partial}{\partial v_j}(| v-a_i |^2)^{\frac{{\alpha}_i}{2}} = {\alpha}_i | v-a_i |^{{\alpha}_i -2} \cdot (v_j - a_i^j)
\end{align*}
where $ a_i= (a_i^1,...,a_i^m) $

Hence
\begin{align*}
\begin{gathered}
W_v(v) = {\nabla}_v W(v) = \sum_{i=1}^{m+1} a_i (| v - a_i |^{ {\alpha}_i -2}) (v-a_i) \prod_{ \nu \neq i} | v -a_{\nu} |^{{\alpha}_{\nu}} = \\ = {\alpha}_2 | v- a_2 |^{{\alpha}_2 -2} (v-a_2) \prod_{\nu \neq 2} | v-a_{\nu} |^{{\alpha}_{\nu}} + \sum_{i \neq 2} {\alpha}_i | v-a_i |^{{\alpha}_i -2} (v-a_i) \prod_{\nu \neq i} | v- a_{\nu} |^{{\alpha}_{\nu}}.
\end{gathered}
\end{align*}

Therefore
\begin{align*}
\begin{gathered}
\Delta P = {\alpha}_2 | v- a_2 |^{{\alpha}_2 -2} \prod_{\nu \neq 2} | v-a_{\nu} |^{{\alpha}_{\nu}} \langle v-a_2 , e \rangle \\ + \sum_{i \neq 2} {\alpha}_i | v-a_i |^{{\alpha}_i -2} \langle v-a_i, e\rangle \prod_{\nu \neq i} | v- a_{\nu} |^{{\alpha}_{\nu}}
\end{gathered}
\end{align*}
$ \\ $
Note that by the contradiction hypothesis, $ \langle v(x) -a_i ,e \rangle >0 $ (think of $ a_2 $ as the origin).

Hence $ \Delta P >0 $ on $ B(0;R) $ contradicting that $ P(v;x) $ takes its maximum at $ x=0 . \\ $

2. $ \overline{\alpha} = 0 \\ $ For $ W(u) = W^0 (u) := {\chi}_{\lbrace u \in S_A \rbrace} $, the proof proceeds similarly. The difference here is that $ \Delta P = 0 $, in $ B(0;R) $ which also leads to a contradiction by the maximum principle since $ P(v;x) $ takes its maximum at $ x=0 . $

3. $ \alpha_k \in [1,2],\; k=1,...,m. \\ $
In this case we define
\begin{align*}
F_M := \lbrace u : {\mathbb{R}}^n \rightarrow {\mathbb{R}}^m \: ,\; \Delta u - W_u(u) =0 \;,\: | u(x) | \leq M \rbrace
\end{align*}
$ u $ a weak $ W^{1,2} $ solution. By linear elliptic theory we have the estimate \eqref{eq:UnHolderBound}. The rest of the argument is as before. $ \\ $
The proof of the proposition is complete.

\end{proof}

$ \\ \\ \\ $

\section{The free boundary}
\label{sec:appendixB}

We follow closely the formal derivation from \cite{AFS} p.140. We imbed the minimizer in a  class of variations, $u(\tau):=u(\cdot,\tau)$,  with $u(0)$ corresponding to the minimizer, $u(\tau)=u(0)$ outside a ball $B$ centered at some $x_0$ and quite arbitrary otherwise.

Let 

\begin{equation}
U(\tau):=\{ |u(\cdot,\tau)-a|>0\}
\end{equation} for 

$$
a\in \{W=0\},\, u(\tau)=a\textrm{ on }\partial U(\tau)
$$

$ \\ $

Set 

\begin{equation}
\lambda(\tau):=\frac{1}{2}\int_{U(\tau)}|\nabla u(\tau)|^2\,dx \: , \; \mu(\tau):=\int_{U(\tau)} W(u(\tau))\,dx
\end{equation}

We denote $V:=\frac{\partial X}{\partial \tau}$ where $X(s,\tau)$ is a parametrisation of $\partial U(\tau), \,s\in\Omega\subset \mathbb{R}^{n-1}$.

Then we have:

\begin{align}\label{eq:taudot}
\dot\lambda(\tau)=&\int_{U(\tau)} \nabla u(\tau)\nabla u_\tau(\tau)\,dx+\dfrac{1}{2}\int_{\partial U(\tau)}|\nabla u(\tau)|^2 V\cdot \nu dS\non\\
=&\int_{U(\tau)} -\Delta u(\tau) u_\tau (\tau)\,dx+\int_{\partial U(\tau)}\frac{\partial u}{\partial \nu}\cdot u_\tau\,dS+\frac{1}{2} \int_{\partial U(\tau)} |\nabla u(\tau)|^2 V\cdot\nu\,dS
\end{align} 
where $ \nu $ is the unit outward normal to $ \partial U( \tau) $ (pointing outside $ U(\tau) $).

Now from $ u(X(s,\tau),\tau)=a$ we obtain:

\begin{align}
0=& \frac{\partial }{\partial\tau}[u(X(s,\tau),\tau)]=\frac{\partial u}{\partial \tau}+\frac{\partial u}{\partial\nu}\frac{\partial X}{\partial \tau}\cdot \nu\non\\
=&u_\tau+\frac{\partial u}{\partial \nu} V\cdot\nu 
\end{align}

Hence
\begin{equation}\label{eq:utauunu}
u_\tau\cdot\frac{\partial u}{\partial \nu}=-|\frac{\partial u}{\partial\nu}|^2 V\cdot\nu
\end{equation}

Then from \eqref{eq:taudot} and \eqref{eq:utauunu} and the equation $ \Delta u = W_u(u) $ we get:

\begin{equation}
\dot\lambda(0)=\int_{U(0)} -W_u(u(0))u_\tau(0)\,dx-\frac{1}{2}\int_{\partial U(0)}|\nabla u(0)|^2 V\cdot\nu dS.
\end{equation}

On the other hand

\begin{equation}
\dot \mu(\tau)=\int_{\partial U(\tau)} W(u(\tau)) V\cdot\nu dS+\int_{U(\tau)} W_u(u(\tau))u_{\tau}(\tau) \,dx
\end{equation}

Here for $0<\alpha<2$ utilizing that $W(u(0))=0$ on $\partial U(0)$ we get:

\begin{align}
0=&\dot \mu(0)+\dot \lambda (0)\non\\
  =&-\frac{1}{2} \int_{\partial U(0)} |\nabla u(0)|^2 V\cdot\nu\,dS
\end{align} and since $V$ is arbitrary

\begin{equation}
|\nabla u(0)|=0\,\textrm{ on }\partial U(0)\textrm{ for }\alpha\in (0,2).
\end{equation}
(we note that $u\in C^{1,\beta-1}$,$\beta=\frac{2}{2-\alpha}$ by \cite{AGZ}).

Now, for $\alpha=0$ we have $W(u(0))=1$  on $\partial U(0)$ and 
\begin{align}
0=&\dot \mu(0)+\dot \lambda (0)\non\\
  =& \int_{\partial U(0)} V\cdot\nu\,dS-\frac{1}{2} \int_{\partial U(0)} |\nabla u(0)|^2 V\cdot\nu\,dS
\end{align}
hence $\frac{1}{2}|\nabla_+ u(0)|^2=1$ ($u$ is only Lipschitz, $\nabla_+$ is the one-sided gradient).
\end{appendix}

\bigskip

\end{document}